\documentclass[reqno, 11pt]{amsart}
\usepackage[top=3.5cm, bottom=3.5cm, left=2.5cm, right=2.5cm, headsep=0.75cm]{geometry}            
\usepackage{amsmath,amsrefs,amssymb}
\usepackage{pgfplots}
 \usepackage{tikz,float}
\pgfplotsset{compat = newest}

\usepackage{graphicx}
\usepackage[colorlinks=true, pdfborder={0 0 0}]{hyperref}
\usepackage{subcaption}
\usepackage[font=scriptsize]{caption}
\usepackage{mwe}
\usepackage{dutchcal}
\usepackage{soul}

\newcommand{\R}{\mathbb{R}}

\newtheorem{theorem}{Theorem}[section]
\theoremstyle{plain}

\newtheorem{corollary}{Corollary}[section]

\newtheorem{lemma}{Lemma}[section]

\newtheorem{proposition}{Proposition}[section]
\newtheorem{remark}{Remark}[section]

\usepackage{thmtools}

\newcommand{\hn}{\mathbb{H}^{n}}

\newcommand{\divop}{div}

\numberwithin{equation}{section}

\allowdisplaybreaks

\begin{document}

\title[GH stability and rigidity of HPW]{Gromov-Hausdorff Stability and Rigidity of manifolds via Heisenberg-Pauli-Weyl Uncertainty Principle}

\author[]{Mousomi Bhakta}
\address{Mousomi Bhakta, Department of Mathematics\\
Indian Institute of Science Education and Research Pune (IISER-Pune)\\
Dr Homi Bhabha Road, Pune-411008, India}
\email{mousomi@iiserpune.ac.in}
\author[]{Debdip Ganguly}
\address{Debdip Ganguly, Theoretical Statistics and Mathematics Unit, 
Indian Statistical Institute, Delhi Centre\\
S.J. Sansanwal Marg, New Delhi, Delhi 110016, India}
\email{debdipmath@gmail.com,  debdip@isid.ac.in}
\author[]{Debabrata Karmakar}
\address{Debabrata Karmakar, Tata Institute of Fundamental Research\\
Centre For Applicable Mathematics\\
Post Bag No 6503, GKVK Post Office, Sharada Nagar, Chikkabommsandra, Bangalore 560065, India}
\email{debabrata@tifrbng.res.in}

\date{\today}
\subjclass[2010]{Primary: 53C24, 53C23, 53C21, 26D10, 58J60, 49Q20}
\keywords{}


\begin{abstract}

 The classical Heisenberg-Pauli-Weyl (HPW) inequality on $\mathbb{R}^n$ asserts that
 $$
 \left(\int_{\mathbb{R}^n} \vert{}\nabla u\vert{}^2 dx\right) \left(\int_{\mathbb{R}^n} \vert{}x\vert{}^2 u^2 dx\right) \geq \frac{n^2}{4} \left(\int_{\mathbb{R}^n} u^2 dx\right)^2
 $$
 holds for every $u \in H^1(\mathbb{R}^n)$ with $\vert{}x\vert{}u \in L^2(\mathbb{R}^n)$, with equality holding if and only if $u$ is Gaussian: $u_\lambda(x) = e^{-\lambda \vert{}x\vert{}^2}$ ($\lambda > 0$). In the Riemannian setting, this inequality exhibits a strong rigidity phenomenon. On Cartan–Hadamard manifolds, extremizers exist only when the manifold is isometric to $\mathbb{R}^n$, while, among manifolds with non-negative Ricci curvature, validity of the Euclidean HPW inequality itself forces the manifold to be isometric to $\mathbb{R}^n$. This geometric discrepancy suggests the introduction of a suitable curvature-dependent correction in the HPW formulation: for Cartan–Hadamard manifolds, the correction appears as an additional curvature-dependent term, while in case $\mbox{Ric} \geq 0$, it is  encoded through the asymptotic volume ratio (AVR).

 In this article, we investigate the following geometric stability question: given a sequence of pointed Riemannian manifolds $(M_k, g_k, o_k)$ and appropriately normalized functions $u_k \in C_c^\infty(M_k)$ for which the associated HPW deficit vanishes as $k \to \infty$, does the sequence of manifolds converge to the corresponding model space in the pointed Gromov-Hausdorff topology?

 We show that for pinched Cartan-Hadamard manifolds with sectional curvature bounded above by $c < 0$, convergence to the model hyperbolic space $\mathbb{H}^n_c$ holds. In the setting of manifolds with non-negative Ricci curvature, convergence to Euclidean space $\mathbb{R}^n$ is established up to a metric rescaling factor governed by the moment term of the sequence. As a corollary, we deduce that for non-negatively Ricci-curved manifolds that are not isometric to Euclidean space, the known HPW inequality formulated in terms of the asymptotic volume ratio (AVR) is suboptimal. To address this, we introduce a curvature-corrected HPW inequality in this setting, analogous to Cartan-Hadamard manifolds.

 In addition,  both in the non-negatively curved manifolds and in the negatively curved manifolds we establish respective quantitative rigidity results by demonstrating that the HPW deficit, when evaluated at Gaussian profiles, explicitly controls an appropriately defined distance from the corresponding model space.


\end{abstract}

\maketitle

\setcounter{tocdepth}{1}
\tableofcontents

\medskip

\noindent
\section{Introduction}

Optimal geometric and functional inequalities, together with their optimizers and stability properties, form a fundamental bridge between geometric analysis, partial differential equations, and mathematical physics. Over the past half-century, a broad class of fundamental inequalities-including the isoperimetric, Sobolev, Hardy, and Poincar\'e inequalities-has been studied extensively, leading to a deep and well-developed theory.

From the perspective of mathematical physics, another fundamental inequality is the Heisenberg uncertainty principle. In quantum mechanics, this principle asserts that the position and momentum of a particle cannot be simultaneously determined with arbitrary precision (see \cite{H}). Rigorously formulated in the mathematical framework of quantum mechanics by Pauli and Weyl \cite{W}, the uncertainty principle expresses, in analytic terms, the impossibility of simultaneously localizing a function and its Fourier transform to an arbitrarily high degree.

In Euclidean space $\mathbb{R}^n$, an equivalent formulation of the classical Heisenberg–Pauli–Weyl uncertainty principle asserts that
$$\left(\int_{\mathbb{R}^n} |\nabla u|^2 dx\right) \left(\int_{\mathbb{R}^n} |x|^2 u^2 dx\right) \ge \frac{n^2}{4} \left(\int_{\mathbb{R}^n} u^2 dx\right)^2\quad $$
holds for every $u \in H^1(\mathbb{R}^n)$ satisfying $\vert{}x\vert{}u \in L^2(\mathbb{R}^n).$
The constant $n^2/4$ is optimal, and equality is attained precisely by the family of Gaussian functions $u_\lambda(x) = e^{-\lambda |x|^2}$ ($\lambda > 0$).

When one seeks to extend functional inequalities from the flat Euclidean setting to a curved Riemannian manifold $(M,g)$, the geometry of the underlying space plays a decisive role in determining their validity, sharpness, and extremal structure. In particular, curvature can substantially influence both the optimal constants and the existence and characterization of optimizers.

The uncertainty principle has been extensively investigated in a broad range of settings, including Euclidean spaces, homogeneous groups, Riemannian manifolds, and more general metric measure spaces. For an overview of its developments and applications, we refer the reader to the surveys of Folland and Sitaram \cite{FS} and Fefferman \cite{F}, as well as the works of Nahmod \cite{N}, Ciatti-Ricci-Sundari \cite{CRS}, and Okoudjou-Saloff Coste-Teplyaev \cite{OST}. This list is by no means exhaustive. 
In the Riemannian setting, various forms of the Heisenberg--Pauli--Weyl uncertainty principle have been studied on both compact and noncompact manifolds; see, for instance, \cites{KO1,KO2,Anderson1,Anderson2,Erb1,Erb2,K}. Related uncertainty principles have also been developed in more general settings, including measure spaces, groups of polynomial volume growth, graphs, and fractals; see \cites{CRS,OST, HanXu}. In the Euclidean setting, stability questions for the Heisenberg uncertainty principle have received considerable attention, with several results relating the deficit to the family of extremals; see, for example, \cites{CFLL,DLL,MF,MV}.

\medskip

Throughout this article, we let $(M,g)$ be an $n$-dimensional complete, non-compact and connected Riemannian manifold, with $n\geq 2$. We denote by $dV_g$ the canonical Riemannian volume element and by $\nabla_g$ the gradient with respect to the metric $g$. For linearly independent vectors $v,w\in T_xM$, we denote by
$K_g(v\wedge w)$
the sectional curvature of the two-plane $\mbox{span}\{v,w\}\subset T_xM$. We denote by $\mbox{Ric}_M$ the Ricci curvature, viewed as a $(0,2)$-tensor field on $M$.

Let $d$ denotes the Riemannian distance induced by the metric $g$. Fix a point $x_0\in M$ we denote by $d_{x_0}(x):=d(x,x_0),$ the distance function from $x_0$. Throughout this article, we consider two distinct geometric regimes:

\begin{itemize}
\item[(a)] \textbf{Non-negatively Ricci curved manifolds:} $(M,g)$ is a complete non-compact Riemannian manifold satisfying
$\mbox{Ric}_M \geq 0.$

\item[(b)] \textbf{Cartan--Hadamard manifolds:} $(M,g)$ is a complete, simply connected Riemannian manifold whose sectional curvature satisfies
$K_g \leq c<0.$

Throughout the article, the constant $c$ is fixed unless otherwise stated.
\end{itemize}

In  \cite{K} Krist\'aly established a remarkable rigidity dichotomy governed by curvature for the following HPW principle on $(M,g)$:
$$\left(\int_{M} |\nabla_g u|^2 dV_{g} \right) \left(\int_{M} d_{x_0}^2 u^2 dV_{g} \right) \ge \frac{n^2}{4} \left(\int_{M} u^2 dV_{g} \right)^2 \quad \forall\, u\in C^\infty_c(M).  \qquad \text{\bf{(HPW)}}_{x_0} $$

\medskip

\begin{itemize}
    \item Non-negatively curved manifolds ($\text{Ric}_M \ge 0$): Inequality $\text{\bf{(HPW)}}_{x_0} $ holds if and only if $(M,g)$ is isometric to $\mathbb{R}^n$.  

    \item Cartan–Hadamard manifolds ($K_g \le 0$): While $\text{\bf{(HPW)}}_{x_0} $ holds with the Euclidean sharp constant $n^2/4$ across all Cartan-Hadamard spaces, positive extremal functions exist if and only if $(M,g)$ is  isometric to $\mathbb{R}^n$. 
\end{itemize}

As a consequence of Krist\'{a}ly's result it follows that for non-positively curved manifolds such as the hyperbolic space $\mathbb{H}^n$,  $\text{\bf{(HPW)}}_{x_0} $ possesses no positive extremals. 

\medskip

\subsection{HPW on Cartan–Hadamard manifolds}

To account for the geometric defect induced by curvature, Krist\'aly introduced a curvature correction term in the HPW-inequality, leading to what is referred to as the \emph{quantitative HPW uncertainty principle}. To state the precise result, we first introduce some notations.
 For a Cartan–Hadamard manifold $(M,g)$ with sectional curvature $K_g \le c \le 0$, ($c$  constant), define the functions $\text{{\bf ct}}_c : (0,\infty) \to \mathbb{R}$  by

\begin{align}\label{c-rho}
\text{{\bf ct}}_c(\rho) =
\begin{cases}
\dfrac{1}{\rho}, & \text{if } c = 0, \\[6pt]
\sqrt{|c|}\,\coth\!\bigl(\sqrt{|c|}\,\rho\bigr), & \text{if } c < 0,
\end{cases}
\end{align}

and  $\text{{\bf D}}_c : [0,\infty) \to \mathbb{R}$  by

\begin{align}
\text{{\bf D}}_c(\rho) =
\begin{cases}
0, & \text{if } \rho = 0, \\[6pt]
\rho\,\text{{\bf ct}}_c(\rho) - 1, & \text{if } \rho > 0.
\end{cases}
\end{align}

In \cite[Theorem 3.1]{K}, Krist\'aly showed that the natural counterpart of the classical HPW uncertainty principle on non-positively curved manifolds, which preserves the Gaussian extremals, is the \emph{$c$-quantitative HPW uncertainty principle}. More precisely, if $(M,g)$ is an $n$-dimensional Cartan--Hadamard manifold whose sectional curvature satisfies $K_g\leq c\leq 0,$
then, for every $x_0\in M$ and every $u\in C^\infty_c(M)$, the following inequality holds:
\begin{align*}
\left(\int_{M} |\nabla_g u|^2 dV_{g} \right) \left(\int_{M} d_{x_0}^2 u^2 dV_{g} \right) \ge \frac{n^2}{4} \left(\int_{M} \left(1 + \frac{n-1}{n} \text{{\bf D}}_c(d_{x_0})\right) u^2 dV_{g} \right)^2. \qquad\text{\bf{(HPW)}}_{x_0}^c
\end{align*}

\medskip

{\rm
   The constant $\frac{n^2}{4}$ is optimal in $\text{\bf{(HPW)}}_{x_0}^{\,c}$ and moreover, when $M = \mathbb{H}_c^n, c<0$, the hyperbolic Gaussians
\begin{align*}
u_\alpha(x)=e^{-\alpha d_{x_0}^2(x)}, \qquad \alpha>0,
\end{align*}
emerge naturally as extremal functions; see \cite[Theorem~1.3]{DGLL}.}

In \cite[Remark 5.1]{KKPZ} it was also mentioned that
 on a general Cartan–Hadamard manifold $(M,g)$ with sectional curvature $K_g \le c <  0$, if an extremal of $\text{\bf{(HPW)}}_{x_0}^{\,c}$ exists  then $M$ is isometric to $\hn_c$. For the convenience of the reader, we have given a detailed proof of this result in the appendix. 
 
 In this article, we are interested in the question of quantitative stability of the manifolds, viewed through the lens of the HPW-principle. For a Cartan-Hadamard manifold $(M,g)$ with sectional curvature bounded above by $c <0,$ we denote the $\text{\bf{(HPW)}}_{x_0}^{\,c}$ deficit:
 
 \begin{align*}
I_M(u) := \left(\int_{M} |\nabla_g u|^2 dV_{g} \right) \left(\int_{M} d_{x_0}^2 u^2 dV_{g} \right) - \frac{n^2}{4} \left(\int_{M} \left(1 + \frac{n-1}{n} \text{{\bf D}}_c(d_{x_0})\right) u^2 dV_{g} \right)^2.
 \end{align*}

Suppose we have a sequence of (pointed) Cartan-Hadamard manifolds $(M_k, g_k, o_k)$ with sectional curvature bounded above by $c <0,$ and functions $u_k \in C_c^{\infty}(M_k)$ such that the associated HPW deficit tends to zero: $\lim_{k \rightarrow \infty} I_{M_k}(u_k) = 0$. We ask under what conditions we can conclude that the sequence $M_k$ converges to $\mathbb{H}^n_c$ in a suitable geometric sense, such as the pointed Gromov-Hausdorff topology.

To establish such a limit, we must impose hypotheses that exclude two major analytical and geometric obstructions:
\begin{itemize}
\item Non-concentration: We must prevent $u_k$
 from concentrating at the pole. Without this assumption, a vanishing deficit may simply reflect the local Euclidean structure near the pole rather than the global geometry of the manifold, since any sufficiently concentrated Gaussian is an approximate extremizer.

\item Curvature lower bound: We require a uniform sectional curvature lower bound, $K_{g_k} \ge c_0$. Without it, the sequence of manifolds could collapse or lack the non-collapsing volume bounds required by Gromov's Compactness Theorem, thereby preventing the extraction of a limit space entirely.
\end{itemize}

Our first main result is the following stability result.

\begin{theorem}\label{th:stability-4}
Let $(M_k, g_k, o_k)$ be a sequence of pointed $n$-dimensional  Cartan-Hadamard manifolds such that the sectional curvature is bounded from above by $c < 0$ and below by $c_0$. Let $\rho_k(x) = d_{g_k}(o_k, x)$ and let $u_k \in H^1(M_k, g_k)$ be a sequence of functions satisfying the normalization and the non-concentration condition: 
\begin{align*}
\Vert{}u_k\Vert{}_{L^2(M_k)} = 1, \ \ \ 0<C^{-1}\leq\Vert{}\rho_k u_k\Vert{}_{L^2(M_k)} \leq C < \infty.
\end{align*}
Assume that the Heisenberg-Pauli-Weyl functional deficit satisfies:
$$\lim_{k \to \infty}I_{M_k}(u_k)=0.$$
Then, upto a subsequence $(M_k, g_k, o_k)$ converges in the pointed Gromov-Hausdorff topology to $\hn_c$, the simply connected $n$-dimensional space form of constant sectional curvature $c$.
\end{theorem}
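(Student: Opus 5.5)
Since every $M_k$ is a Cartan--Hadamard manifold with $c_0\le K_{g_k}\le c<0$, Gromov's precompactness theorem applies: Bishop--Gromov gives a uniform bound on the number of $\varepsilon$-balls needed to cover any $B_R(o_k)$. Hence a subsequence converges in the pointed Gromov--Hausdorff sense to a complete length space $(X,d_X,o)$. Moreover, each $M_k$ is simply connected with nonpositive curvature, so the injectivity radius is infinite. With the two-sided sectional curvature bound, Cheeger--Gromov compactness upgrades this to pointed $C^{1,\alpha}$ convergence to a smooth manifold $(M_\infty,g_\infty,o)$ with a $C^{1,\alpha}$ metric. In the comparison sense, $M_\infty$ is a Cartan--Hadamard space with $c_0\le K\le c$. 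It therefore remains to show that $M_\infty$ is isometric to $\hn_c$.

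To identify the limit, I will revisit the proof of $\text{\bf{(HPW)}}_{x_0}^c$ and keep track of all the nonnegative terms that are discarded. The proof goes through $\Delta \rho^2/2 = 1+\rho\Delta\rho \ge n+(n-1)\text{{\bf D}}_c(\rho)$, which follows from Laplacian comparison since $K\le c$. Writing $h_k := \tfrac12\Delta_{g_k}\rho_k^2 - n - (n-1)\text{{\bf D}}_c(\rho_k)\ge 0$ (as a measure), integration by parts and Cauchy--Schwarz give
$$\int (n+(n-1)\text{{\bf D}}_c)u_k^2 + \int h_k u_k^2 = -2\int \rho_k u_k \partial_\rho u_k \le 2\|\nabla u_k\|_2\|\rho_k u_k\|_2 .$$
Squaring and subtracting yields
$$I_{M_k}(u_k)\ \ge\ \tfrac14\Big(\int h_k u_k^2\Big)\Big(\int (2n+(n-1)\text{{\bf D}}_c) u_k^2\Big)\ \ge\ \tfrac n2 \int h_k u_k^2 ,$$
together with an analogous quantitative control of the Cauchy--Schwarz gap, $\|\nabla u_k + \lambda_k\rho_k u_k\nabla\rho_k\|_2\to 0$ for suitable $\lambda_k$. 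The bounds $\|\rho_k u_k\|\in[C^{-1},C]$ ensure that $\lambda_k$ stays bounded and away from $0$. In particular, $u_k$ is asymptotically a Gaussian $e^{-\lambda_k\rho_k^2/2}$ in $L^2$, and it neither concentrates at the pole nor escapes to infinity. Consequently $\int h_k u_k^2\to 0$, and the Gaussian weight has a positive lower bound on every ball $B_R(o_k)$, so $h_k\to 0$ in $L^1_{loc}$ of each ball, uniformly in $k$ for fixed $R$.

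The step I expect to be the main obstacle is turning $h_k\to0$ locally into rigidity of the limit. My plan is to work in polar coordinates about $o_k$. Here $h_k = \rho_k\big(\Delta\rho_k-(n-1)\text{{\bf ct}}_c(\rho_k)\big)$, and the Riccati/Jacobi comparison gives a monotone quantity: with $J_k$ the volume density and $s_c$ the model density, the ratio $J_k/s_c^{\,n-1}$ is nondecreasing along geodesics and equals $1$ at $o_k$. Its logarithmic derivative is $h_k/\rho_k$. Integrating therefore shows that the volume ratio $\mathrm{vol}(B_R(o_k))/\mathrm{vol}_{\hn_c}(B_R)\to 1$ for every fixed $R$. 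Passing to the $C^{1,\alpha}$ limit, or alternatively invoking Colding's volume convergence under the lower Ricci bound, gives $\mathrm{vol}(B_R(o))=\mathrm{vol}_{\hn_c}(B_R)$ for all $R$ in $M_\infty$, which satisfies $K\le c$. Equality in the Bishop--Günther comparison then forces $\Delta\rho=(n-1)\text{{\bf ct}}_c(\rho)$ everywhere. Hence the shape operator of distance spheres is exactly $\text{{\bf ct}}_c\,\mathrm{Id}$, and the radial curvatures equal $c$. The metric in exponential coordinates is then $d\rho^2+s_c(\rho)^2 g_{S^{n-1}}$, so $M_\infty\cong\hn_c$; this is the same rigidity argument as in the appendix. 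Since the limit is independent of the subsequence, the conclusion follows.
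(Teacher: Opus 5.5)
Up to the volume convergence $\operatorname{Vol}_{g_k}(B(o_k,R))\to V_c(R)$, your argument is essentially the paper's. You split the deficit into a Cauchy--Schwarz gap and a Laplacian-comparison gap, pinch the multiplier $\lambda_k$, and represent $u_k$ in $L^2_{\mathrm{loc}}$ as a Gaussian profile. From this you get smallness of $\int_{B_R}\rho_k\bigl(\Delta_{g_k}\rho_k-(n-1)\mathbf{ct}_c(\rho_k)\bigr)\,dV_{g_k}$ and integrate $\partial_r\log(J_k/s_c^{n-1})$; the paper integrates the sphere-averaged version $\frac{d}{dr}(A_k/A_c)$, which is the same idea.

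Two details you leave implicit are needed at that stage:
\begin{enumerate}
\item Passing from $\int h_k u_k^2\to0$ to $\int_{B_R}h_k\to0$ uses that $h_k$ is uniformly bounded on $B_R$ and that the Gaussian amplitude stays away from $0$.
\item Volume convergence near the pole uses the uniform bound $0\le\Delta_{g_k}\rho_k-(n-1)\mathbf{ct}_c(\rho_k)\le(n-1)\bigl(\mathbf{ct}_{c_0}(\rho_k)-\mathbf{ct}_c(\rho_k)\bigr)=O(\rho_k)$ coming from the lower curvature bound. This is needed because $L^1(dV)$-smallness gives no information at $\rho=0$.
\end{enumerate}

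The genuine gap is the final identification of the limit. The pointed limit $M_\infty$ carries only a $C^{1,\alpha}$ metric, and $c_0\le K\le c$ holds only in the Alexandrov/CAT$(c)$ sense. The tools you then use all require at least a $C^2$ metric, since Jacobi fields and the curvature tensor involve second derivatives of $g$:
\begin{itemize}
\item the equality case of Bishop--G\"unther,
\item the pointwise identity $\Delta\rho=(n-1)\mathbf{ct}_c(\rho)$,
\item the shape operator of distance spheres and the Riccati equation,
\item ``radial curvature $=c$''.
\end{itemize}
So the smooth rigidity argument of the appendix cannot simply be run on $M_\infty$. Colding's volume convergence only transfers the volume identity to the limit, not the rigidity.

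What is missing is a volume-rigidity theorem valid in the synthetic setting. The paper supplies it via Nagano (Proposition 6.1 there): in a CAT$(c)$ space, a ball whose Hausdorff measure equals that of the model ball of the same radius is isometric to it. Since $R$ is arbitrary and $M_\infty$ is simply connected, this gives $M_\infty\cong\hn_c$. This is the synthetic counterpart of G\"unther's argument: in a CAT$(c)$ space the radial map from the model ball onto $B_R(o)$ does not decrease distances, and equality of volumes forces it to be an isometry.

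To close the gap, either cite such a synthetic rigidity statement, or carry out the rigidity on the smooth $M_k$ before passing to the limit. The latter would need a genuinely additional argument, e.g.\ showing that $\exp_{o_k}^*g_k$ converges to the hyperbolic metric.
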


As a direct consequence of this stability result, we recover the rigidity case of the HPW inequality: if a manifold $(M,g)$ satisfying the hypotheses of the theorem admits a non-trivial HPW-extremizer, then $M$ is isometric to $\mathbb{H}^n_c$. Indeed, this follows immediately by setting $M_k = M$ and taking $u_k$ to be the normalized extremizer.

\begin{remark}
The assumption on the upper bound $\Vert{}\rho_k u_k\Vert{}_{L^2(M_k)} \leq C$ may at first seem artificial; however, it is in fact a necessary condition, as the theorem fails without it. For example, consider that $M$ is a $n$-dimensional warped product manifold, $M = \mathbb{R} \times_{\cosh(\sqrt{|c|}r)} \mathbb{H}_{c'}^{n-1}$, where $c^{\prime}<c<0$. Then the sectional curvature of $M$ lies in $[c',c]$. One can then construct a sequence $u_k$ concentrating farther and farther along  a radial geodesic ray for which the deficit tends to zero, while
$\|\rho  u_k\|_{L^2(M)}\rightarrow\infty.$
\end{remark}

Next we turn to a quantitative version of the stability result, or rather a quantitative rigidity result. Note that the deficit $I_M(u)$ introduced above is homogeneous of degree $4$ in $u$. In order to simplify computational complexity, we introduce the homogeneous degree $2$ deficit 
\begin{align}\label{I_M}
\mathcal{I}_{M}(u) := \sqrt{\left( \int_M |\nabla_g u|^2 \, dV_{g} \right)
\left( \int_M d_{x_0}^2 \, u^2 \, dV_{g} \right)}
-
\frac{n}{2}\left( \int_M \left( 1 + \frac{n-1}{n} \text{{\bf D}}_c(d_{x_0}) \right) u^2 \, dV_{g} \right),
\end{align}
which is non-negative. We define the deficit of $M$ from being $\hn_c$ in terms of the $\mbox{\bf{(HPW)}}_{x_0}^c$-deficit by
\begin{align}\label{eq:deficit}
\delta(M) := \mathcal{I}_M\left( e^{-\frac{1}{2}d_{x_0}^2}\right).
\end{align}
Note that if $\delta(M) = 0,$ then by rigidity result,  Theorem~\ref{rigidity thm} below, $M$ is isometric to the hyperbolic space $\hn_c$. Now we want to define an appropriate notion of a distances from a Cartan-Hadamard manifold of sectional curvature $\leq c$ to the model space $\hn_c.$ Once again, by the rigidity of constant sectional curvature space, we know that if a manifold has constant sectional curvature $c,$ then it is isometric to the hyperbolic space $\hn_c.$ Therefore, it makes sense to define distance as a norm of $(c - K_g)$ in some Lebesgue space. 

By Cartan-Hadamard Theorem, the exponential map $\exp_{x_0}: T_{x_0}M \to M$ is a smooth diffeomorphism from the tangent space $T_{x_0}M$ onto $M$. The global diffeomorphism of $\exp_{x_0}$ allows the definition of geodesic polar coordinates on the entire manifold. For $x \in M \setminus \{x_0\}$, we write $x = \exp_{x_0}(\rho\theta)$ where $ \rho = d(x_0, x)$ is the radial distance and $\theta \in \mathbb{S}^{n-1} \subset T_{x_0}M$ is a unit tangent vector. For any  $\theta\in\mathbb{S}^{n-1} \subset T_{x_0}M$, let $\gamma_\theta(.)$ be the unique unit-speed geodesic starting at $x_0$ in the direction $\theta$: 

$$\gamma_\theta(t) = \exp_{x_0}(t\theta), \quad t \geq 0.$$

Since $M$ is a Cartan-Hadamard manifold, geodesics never intersect (except at $x_0$) and cover the entire manifold. By Gauss lemma, the radial geodesics emanating from $x_0$ remain perpendicular to the geodesic spheres $S_\rho(x_0)$ centered at $x_0$. Consequently, $g\left(\frac{\partial}{\partial \rho}, \frac{\partial}{\partial \theta^i}\right) = 0.$ Furthermore, because $\gamma_\theta(\rho)$ is a unit-speed geodesic, $g\left(\frac{\partial}{\partial \rho}, \frac{\partial}{\partial \rho}\right) = 1$. In these polar coordinates $(\rho, \theta^1, \theta^2, \dots, \theta^{n-1})$, locally the Riemannian metric $g$ can be decomposed into a radial component and a spherical component:
\begin{align*}
g = d\rho^2 + g_{ij}(\rho, \theta) d\theta^i d\theta^j
\end{align*}
and the volume element is given by:\
\begin{align}\label{8-19-1}
dV_{g} = \sqrt{\det(g_{ij}(\rho, \theta))} \, d\rho \, d\theta^1 \cdots d\theta^{n-1} = J(\rho, \theta) \, d\rho \, d\Theta
\end{align}

where $d\Theta$ is the standard volume element on the unit sphere $\mathbb{S}^{n-1}\subset T_{x_0}M$ and $J(\rho, \theta)$ is the density function. On $\mathbb{S}^{n-1}$, let  the standard metric $h$ in the same local coordinates $(\theta^1, \dots, \theta^{n-1})$ be given by $h_{ij}(\theta) d\theta^i d\theta^j$. Then the standard volume element on the sphere is $d\Theta = \sqrt{\det(h_{ij}(\theta))} \, d\theta^1 \dots d\theta^{n-1}$
 and therefore $J(\rho, \theta) = \sqrt{\frac{\det(g_{ij}(\rho, \theta))}{\det(h_{ij}(\theta))}}.$

Since, for each $\theta \in \mathbb{S}^{n-1}$,  $\gamma_{\theta}$ is the unit speed radial geodesic in the direction of $\theta$, for every $t\in [0,\infty)$, using parallel transport along $\gamma_{\theta}$, we can consider $\big(\dot{\gamma}_{\theta}(t), (e_i(t))_{i=1}^{n-1}\big)$ as an orthonormal basis of $T_{\gamma_{\theta}(t)}M.$ Then we can define the sectional curvature $K_g(\dot{\gamma}_\theta(t) \wedge e_i))$ of the plane spanned by $\dot{\gamma}_{\theta}(t)$ and $e_i(t), 1 \leq i \leq n-1.$ For simplicity, we will omit the dependence of $\theta$ and $t$ from the notation of $(e_i)_{i=1}^{n-1}.$
  
  \medskip
  
  We then define the distance of a pinched Cartan-Hadamard $M$ from being a constant curvature manifold $\hn_c$ by
 \begin{align*}
 \text{dist}(M, \hn_c) :=  \int_{\mathbb{S}^{n-1}} \left( \int_0^\infty \left[ \sum_{i=1}^{n-1} |c - K_g(\dot{\gamma}_\theta(\rho) \wedge e_i)| \right] e^{-\rho^2} J(\rho, \theta) d\rho \right) d\Theta.
 \end{align*}
Note that since $M$ is pinched between $c_0 = -\kappa_0^2$ and $c= - \kappa^2,$ the sectional curvature $K$ is bounded, and the volume coefficient $J(\rho, \theta)$ can have at most exponential growth. Hence, the above distance is always finite. Also, note that $\text{dist}(M, \hn_c) = 0$ implies that $M$ is isometric to the hyperbolic space with constant sectional curvature $c.$  We prove the following rigidity estimate:
\begin{theorem}\label{th:stability}
Let $(M,g)$ be a pinched Cartan-Hadamard $n$-manifold with sectional curvature bounded above by $c  = -\kappa^2 <0$ and bounded below by $c_0 =-\kappa_0^2.$ Then there exists a constant $C := C(n,\kappa, \kappa_0) > 0$ such that 
\begin{align*}
\text{dist}(M, \hn_c)  \leq C 
\begin{cases}
\delta(M), \ \ \ \ \ \ \mbox{if} \ \ \ n \geq 3, \\
\delta(M)^{\frac{1}{2}}, \ \ \ \ \mbox{if} \ \ \ n = 2.
\end{cases}
\end{align*}
\end{theorem}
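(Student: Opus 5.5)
The plan is to compute $\delta(M)$ explicitly via the Riccati equation along radial geodesics, and then compare it with $\text{dist}(M,\hn_c)$ through an integration by parts in $\rho$.

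\textbf{Step 1: an explicit formula for $\delta(M)$.} Write $\rho=d_{x_0}$ and $u=e^{-\rho^2/2}$. Since $\nabla_g u=-\rho u\,\partial_\rho$, we have $|\nabla_g u|^2=\rho^2u^2$, so the square root in $\mathcal{I}_M(u)$ equals $A:=\int_M\rho^2e^{-\rho^2}dV_g$. In polar coordinates \eqref{8-19-1}, with mean curvature $m=\Delta_g\rho=\partial_\rho J/J$, one integration by parts in $\rho$ gives $A=\frac12\int_M e^{-\rho^2}(1+\rho m)\,dV_g$. A short computation gives $\frac n2\bigl(1+\frac{n-1}{n}\text{{\bf D}}_c(\rho)\bigr)=\frac12\bigl(1+(n-1)\rho\,\text{{\bf ct}}_c(\rho)\bigr)$. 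Subtracting, we get the identity
\begin{align*}
\delta(M)=\frac12\int_{\mathbb{S}^{n-1}}\int_0^\infty \rho\, w(\rho,\theta)\,e^{-\rho^2}J(\rho,\theta)\,d\rho\, d\Theta,\qquad w:=m-m_c,\quad m_c:=(n-1)\text{{\bf ct}}_c(\rho).
\end{align*}
Laplacian comparison (from $K_g\le c$) gives $w\ge 0$.

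\textbf{Step 2: Riccati identity for $w$.} Along $\gamma_\theta$, let $\lambda_1,\dots,\lambda_{n-1}$ be the eigenvalues of the shape operator of $S_\rho(x_0)$. The Riccati equation $m'+|A|^2+\mathrm{Ric}(\partial_\rho,\partial_\rho)=0$ and the model equation $m_c'+m_c^2/(n-1)+(n-1)c=0$ combine to give
\begin{align*}
\sum_{i=1}^{n-1}|c-K_g(\dot\gamma_\theta\wedge e_i)| = w' + \sum_{i=1}^{n-1}\bigl(\lambda_i-\text{{\bf ct}}_c\bigr)\bigl(\lambda_i+\text{{\bf ct}}_c\bigr),
\end{align*}
using $K_g\le c$ to remove the absolute values. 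Hessian comparison on both sides gives $\text{{\bf ct}}_c(\rho)\le\lambda_i\le \kappa_0\coth(\kappa_0\rho)$. Since $\sum_i(\lambda_i-\text{{\bf ct}}_c)=w$ with nonnegative summands, the last sum is bounded by $C(1+\rho^{-1})\,w$ with $C=C(\kappa,\kappa_0)$.

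\textbf{Step 3: integrate against $e^{-\rho^2}J$.} Multiply the identity of Step 2 by $e^{-\rho^2}J$ and integrate over $\rho$. The boundary terms vanish: at $0$ because $w=O(\rho)$ and $J\sim\rho^{n-1}$, and at $\infty$ because of the Gaussian weight. Integrating the $w'$ term by parts yields $\int w(2\rho-m)e^{-\rho^2}J$. Here $-wm\le 0$ is dropped, and $\int 2\rho w e^{-\rho^2}J$ is $4\delta(M)$ after integrating in $\theta$. The remaining term is $C\int w(1+\rho^{-1})e^{-\rho^2}J$. On $\rho\ge1$ it is bounded by $C\int\rho w e^{-\rho^2}J\le C\delta(M)$.

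\textbf{Step 4: the region $\rho<1$ (main obstacle).} What remains is $\int_0^1 w\rho^{-1}J\,d\rho$, where the weight $\rho^{-1}$ is worse than the weight $\rho$ appearing in $\delta(M)$; this is where the dichotomy in $n$ arises.
\begin{itemize}
\item From Step 2, since the curvature sum is nonnegative, $w'\ge -C(1+\rho^{-1})w$ with $C\approx 2$ near $0$. Hence $\rho^2e^{C\rho}w$ is nondecreasing on $(0,2]$.
\item For $\rho<1$ this gives $w(\rho)\le C\rho^{-2}\inf_{s\in[1,2]}w(s)$. Averaging over $s\in[1,2]$ against $s\,e^{-s^2}J(s,\theta)$, where $J\sim s^{n-1}$ is comparable to $1$ there, yields $w(\rho,\theta)\le C\rho^{-2}\,\delta_\theta$. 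Here $\delta_\theta$ denotes the $\theta$-slice of the integral defining $\delta(M)$.
\item Using $J\le C\rho^{n-1}$, we get $\int_0^1 w\rho^{-1}J\le C\delta_\theta\int_0^1\rho^{n-4}d\rho$. This is finite exactly when $n\ge3$, and integrating in $\theta$ gives the linear bound.
\item For $n=2$, I would additionally use the a priori bound $w\le C\rho$ on $(0,1)$, which follows from bounded curvature. Splitting at $\varepsilon$ gives $\int_0^\varepsilon w\rho^{-1}J\le C\varepsilon^2$ and $\int_\varepsilon^1\le C\delta_\theta\log(1/\varepsilon)$, or alternatively $\le C\varepsilon^{-2}\int\rho wJ$.
\item Optimizing in $\varepsilon$, and using Cauchy–Schwarz/Jensen in $\theta$ if needed, should give the $\delta(M)^{1/2}$ rate. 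If $\delta(M)$ is large, the bound is trivial since $\text{dist}(M,\hn_c)$ is a priori bounded by a constant depending on $n,\kappa,\kappa_0$.
\end{itemize}
Getting the correct exponent for $n=2$ is the step I expect to need the most care.
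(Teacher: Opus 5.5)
Your Steps 1--3 agree with the paper's argument. With $\mu_i:=\lambda_i-\text{{\bf ct}}_c\ge 0$, your Step 2 identity is exactly the traced Riccati equation $E=w'+2\text{{\bf ct}}_c w+\sum_i\mu_i^2$. Your formula $\delta(M)=\frac12\int_M\rho\,w\,e^{-\rho^2}dV_g$ is also the paper's.

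\textbf{The gap is in Step 4 when $n=3$.} The integral $\int_0^1\rho^{n-4}\,d\rho$ is finite only for $n\ge 4$, not for $n\ge 3$. At $n=3$ it diverges logarithmically, so your splitting only gives $\text{dist}(M,\hn_c)\lesssim\delta\log(1/\delta)$, not the linear bound.

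A sharper Step 4 cannot fix this. Once you discard $-wm$ in Step 3, the term $\int_0^1 w\rho^{-1}J\,d\rho$ that you must control is genuinely not $O(\delta)$ in dimension three. Take a (smoothed) rotationally symmetric metric whose radial curvature is $c_0$ on $B_\varepsilon(x_0)$ and $c$ outside. Then on $(\varepsilon,1)$ one has $w\approx C\varepsilon^3\rho^{-2}$ and $wJ\approx C\varepsilon^3$. Hence $\int_0^1 w\rho^{-1}J\,d\rho\approx \varepsilon^3\log(1/\varepsilon)$, while $\delta(M)\approx\varepsilon^3$. For $n\ge 4$ your route does work, with a worse constant.

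\textbf{The missing idea is the cancellation the paper uses.} Keep the term $-wm$ that comes from integrating $w'$ by parts against $e^{-\rho^2}J$. Since $m=w+(n-1)\text{{\bf ct}}_c$, you get
\[
-wm+\sum_{i=1}^{n-1}\mu_i\bigl(\mu_i+2\text{{\bf ct}}_c\bigr)=\Bigl(\sum_{i=1}^{n-1}\mu_i^2-w^2\Bigr)-(n-3)\,\text{{\bf ct}}_c\,w\;\le\;0\qquad (n\ge 3).
\]
The inequality holds because $\mu_i\ge 0$ gives $\sum\mu_i^2\le(\sum\mu_i)^2=w^2$. This yields $\text{dist}(M,\hn_c)\le 4\delta(M)$ for every $n\ge 3$ directly, with no Step 4 at all.

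\textbf{For $n=2$.} The only leftover term is $+\int \text{{\bf ct}}_c\, w\, e^{-\rho^2}J$. Your alternative splitting handles it: use $w\le C\rho$ below $\varepsilon$ and $\rho^{-1}\le\varepsilon^{-2}\rho$ on $[\varepsilon,1]$. This is exactly the paper's argument and gives $\delta^{1/2}$.

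Your $\log(1/\varepsilon)$ there is a slip: for $n=2$ your pointwise bound $w\le C\rho^{-2}\delta_\theta$ gives $C\delta_\theta/\varepsilon$ on $[\varepsilon,1]$. Optimizing $\varepsilon^2+\delta_\theta/\varepsilon$ and then applying H\"older in $\theta$ would even give the stronger rate $\delta^{2/3}$.
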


\begin{remark}
{\rm
For $n\geq 3$, the stability exponent $1$ appearing in the deficit term is optimal, in the sense that it cannot be replaced by any larger exponent (see Subsection~4.1). In contrast, when $n =2,$ the corresponding exponent is $1/2.$ At present, it is not known whether this exponent is optimal. The appearance of the exponent $1/2$ stems from the underlying proof, in particular technical in nature, and it remains an interesting open problem to determine the optimal exponent in dimension two.}
\end{remark}

\subsection{Manifold with non-negative Ricci}

As proved by Krist\'aly, the uncertainty principle $\mathbf{(HPW)}_{x_0}$ cannot hold on a general manifold with $\operatorname{Ric_M} \geq 0$. To derive an analogous HPW principle on a complete, non-compact $n$-dimensional Riemannian manifold $(M,g)$ with $\operatorname{Ric}_M \geq 0$, one must take into account the asymptotic volume ratio:
\begin{align}\label{8-5-2}
\theta := \text{AVR}(M) := \lim_{r \rightarrow \infty} \frac{\text{Vol}_g(B(x_0, r))}{\omega_n r^n}.
\end{align}
Since $M$ has non-negative Ricci curvature, the Bishop-Gromov volume comparison theorem implies that the function $r \mapsto \frac{\operatorname{Vol}_g(B(x_0, r))}{\omega_n r^n}$ is non-increasing, and therefore $\theta \leq 1$. If $\theta > 0$, we say that $M$ has Euclidean volume growth. Moreover, $\theta = 1$ if and only if $\operatorname{Vol}_g(B(x_0, r)) = \omega_n r^n$ for every $r > 0$. By the equality case of the Bishop-Gromov comparison theorem, this guarantees that $M$ is isometric to Euclidean space.

In \cite[Theorem 3.2]{DLL}, the authors demonstrated that the following HPW uncertainty principle holds: Fix a base point $x_0 \in M,$ then 
\begin{equation}\label{HPW-theta}
\mathrm{I}_M(u) := \left( \int_M d_{x_0}^2 u^2 \, dV_{g} \right) \left( \int_M \vert{}\nabla_g u\vert{}^2 \, dV_{g} \right) - \frac{n^2}{4} \theta^{2/n} \left( \int_M u^2 \, dV_{g} \right)^2 \geq 0,  \quad\forall\, u \in C_c^{\infty}(M).
\end{equation}

This result relies on sharp isoperimetric inequality of Brendle \cite{Brendle} (\cites{iso1,iso2} for low dimensions) and the  P\'olya-Sze\"go inequality for manifolds with non-negative Ricci curvature \cite{BK}. The approach in \cite{DLL} uses Euclidean symmetrization (see Section~5 for the definition). Given a non-negative function $u \in C_c^{\infty}(M)$, let $u^{\star}$ be its symmetric decreasing rearrangement defined on $\mathbb{R}^n$.  P\'olya-Sze\"go inequality By Balogh and Krist\'aly states
\begin{align*}
\|\nabla_g u\|_{L^2(M)} \geq AVR(M)^{\frac{1}{n}} \|\nabla u^{\star}\|_{L^2(\mathbb{R}^n)}.
\end{align*}

Consequently, the HPW deficit  $I_M(u)$ (associated with the inequality \eqref{HPW-theta}) is bounded below by $\operatorname{AVR}(M)^{2/n}I_{\mathbb{R}^n}(u^{\star})$, which is always non-negative. Furthermore, if equality is achieved by some function $u$, then equality must also hold in the  P\'olya-Sze\"go inequality \cite[Proposition 3.1]{BK}. This forces $\theta = 1$, meaning $M$ must be isometric to the Euclidean space $\mathbb{R}^n$.

A natural question to ask is whether the inequality \eqref{HPW-theta} is optimal. While failed attempt to construct an example to demonstrate its sharpness, we found that the inequality is, in fact, not sharp. This lack of sharpness emerges as a direct consequence of the stability results we initially set out to explore.

To simplify the notation, we define the following functionals:
\begin{align*}
E_{M}(u) := \int_{M} \vert{}\nabla u\vert{}^2 dV_{g}, \ \ V_{M}(u) := \int_{M} d_{M}(o, x)^2 u(x)^2 dV_{g}.
\end{align*}

Throughout this section, to distinguish the functional $I_M$ from the deficit associated with a Cartan-Hadamard manifold, we use the following notation. Since the choice of the base point \(o\) plays a crucial role, we make this dependence explicit in the definition of the deficit below. For a smooth function $u \in C^\infty(M)$ normalized such that $\Vert{}u\Vert{}_{L^2(M)} = 1$, we define the deficit functional as:
\begin{align}\label{deficit for Ric}
\delta(u; M, o) := \frac{4}{n^2 \operatorname{AVR}(M)^{2/n}} E_{M}(u) V_{M}(u) - 1 = \frac{4}{n^2 \operatorname{AVR}(M)^{2/n}} \mathrm{I}_M(u).
\end{align}

With this setup, we establish the following stability result for manifolds with non-negative Ricci curvature:
\begin{theorem}\label{th:stability-3}
Let $(M_k, g_k,o_k)$ be a sequence of pointed, complete, non-compact, connected, Riemannian $n$-manifolds with $\operatorname{Ric}_{M_k} \ge 0$. Let $u_k \in C_c^\infty(M_k)$ be non-negative functions normalized such that 
$\|u_k\|_{L^2(M_k)} = 1$. 
If the relative HPW deficits satisfy $$\lim_{k \to \infty} \delta(u_k; M_k, o_k) = 0,$$ then there exists a positive sequence $\{\lambda_k\}$ such that $\lim_{k \rightarrow \infty} \lambda_k^2 V_{M_k}(u_k) = 1,$ and up to a subsequence
 $(M_k, \lambda_k^2g_k, o_k)$ converges to $(\R^n, g_{\mbox{\tiny{Eucl}}}, o_{\R^n})$ in  pointed Gromov-Hausdorff topology. 
\end{theorem}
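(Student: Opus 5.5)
Our plan is to reduce everything to the Pólya–Szegő chain described before the statement and extract two pieces of information from the vanishing deficit: that $\operatorname{AVR}(M_k)\to 1$ in a scale-adapted sense, and that the rescaled manifolds are almost volume-cone-like on the relevant scale. First we normalize the scale. Set $\lambda_k := V_{M_k}(u_k)^{-1/2}$, which is finite and positive because $u_k\in C_c^\infty$ is nonzero. The quantities $\delta(u;M,o)$, $\operatorname{AVR}$ and the condition $\mathrm{Ric}\ge 0$ are invariant under $g\mapsto \lambda^2 g$ once we replace $u$ by $\lambda^{-n/2}u$ (which keeps $\|u\|_{L^2}=1$). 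Hence we may assume $V_{M_k}(u_k)=1$, so $\lambda_k^2V_{M_k}(u_k)=1$ trivially. The claim then reduces to showing that $(M_k,g_k,o_k)\to(\R^n,o)$ in the pointed Gromov–Hausdorff sense.

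Next we pass to symmetrizations. Write $\theta_k=\operatorname{AVR}(M_k)$ and let $u_k^\star$ be the Euclidean rearrangement used in \cite{DLL}. This rearrangement scales volumes by $\theta_k$, that is, $|\{u_k^\star>t\}| = \theta_k^{-1}\operatorname{Vol}\{u_k>t\}$ after the normalization making $\|u_k^\star\|_2=\|u_k\|_2$. From the Pólya–Szegő inequality of \cite{BK} and the moment comparison $V_{M_k}(u_k)\ge \theta_k^{2/n}V_{\R^n}(u_k^\star)$, which comes from Bishop–Gromov, we obtain
\[
\tfrac{n^2}{4}\theta_k^{2/n}(1+\delta_k)=E_{M_k}(u_k)V_{M_k}(u_k)\ \ge\ \theta_k^{2/n}E(u_k^\star)V(u_k^\star)\ \ge\ \tfrac{n^2}{4}\theta_k^{2/n}.
\]
Each inequality in this chain is therefore almost an equality. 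Two consequences follow. First, $u_k^\star$ almost attains equality in the Euclidean HPW inequality, so by the Euclidean stability results (\cite{CFLL,MF}) $u_k^\star$ is $L^2$-close, together with its moments, to a Gaussian $G$ of fixed width, since $V=1$ is normalized. Second, the Bishop–Gromov step is almost an equality on the level sets. Because $u_k$ has a Gaussian-like distribution and $u_k^\star$ carries non-trivial mass at every radius in a compact range $[r_1,r_2]$, this yields, for every fixed $R$,
\[
\frac{\operatorname{Vol}B_{g_k}(o_k,R)}{\theta_k\omega_nR^n}\to 1 .
\]

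Finally we conclude. The volume ratio is monotone, bounded below by $\theta_k$, and tends to $1$ as $R\to 0$, so this almost-equality at scale $R$ means the ratio is nearly constant on $[R,\infty)$. We combine this with the Cheeger–Colding volume cone implies metric cone theorem, using Gromov precompactness for $\mathrm{Ric}\ge0$. Along a subsequence the limit $(X,o)$ is a metric cone on every annulus, hence a metric cone $C(Z)$. The remaining issue, which we expect to be the main obstacle, is to rule out $\theta_k\to\theta_\infty<1$, that is, to get a cone with non-round cross section. Here we would use that the Pólya–Szegő step is also almost an equality. By \cite[Proposition 3.1]{BK} equality there forces $\theta=1$. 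We would try to make this quantitative: $E_{M_k}(u_k)\ge \theta_k^{1/n}\cdots$ is obtained through the isoperimetric inequality with constant $n\omega_n^{1/n}\theta_k^{1/n}$, but near the vertex the limit behaves like a small-scale Euclidean region. The Gaussian's mass near $o$ together with the gradient contribution at small radii then forces a loss of order $(1-\theta_k^{1/n})$ in $E_{M_k}(u_k)$ compared to $\theta_k^{2/n}$, unless $\theta_k\to1$. With $\theta_\infty=1$ the limit cone has volume ratio $1$, so it is $\R^n$, which gives the convergence $(M_k,\lambda_k^2g_k,o_k)\to(\R^n,o_{\R^n})$ in the pointed Gromov–Hausdorff topology.
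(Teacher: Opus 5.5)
There is a genuine gap. It starts where you read off volume information from the near-equality in the moment comparison.

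Let $w_k$ denote the volume-preserving rearrangement of $u_k$. Your $u_k^\star$ is, up to its $L^2$ normalization, $w_k(\theta_k^{1/n}\,\cdot)$. With that normalization one checks $\theta_k^{2/n}V_{\R^n}(u_k^\star)=V_{\R^n}(w_k)$. So your inequality $V_{M_k}(u_k)\ge\theta_k^{2/n}V_{\R^n}(u_k^\star)$ is exactly $V_{M_k}(u_k)\ge V_{\R^n}(w_k)$.

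The only geometric input there is the Bishop--Gromov \emph{upper} bound $\operatorname{Vol}B(o_k,s)\le\omega_n s^n$. The lower bound $\operatorname{Vol}B(o_k,s)\ge\theta_k\omega_n s^n$ concentrates mass near $o_k$, so it can only give an upper bound on moments. Hence near-equality saturates the Euclidean bound, not the $\theta_k$-cone bound.

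Concretely, write $\mathcal V_k(s)=\operatorname{Vol}B(o_k,s)$ in your normalized metrics. The moment defect dominates
$$\int_0^\infty\int_0^{r_{t,k}}2s\bigl(\omega_ns^n-\mathcal V_k(s)\bigr)\,ds\,d(t^2),$$
whose integrand is non-negative. The limiting profile is a Gaussian with full support, and $\mathcal V_k(s)/\omega_ns^n$ is monotone in $s$. Together these give $\mathcal V_k(R)/\omega_nR^n\to1$ for every fixed $R$.

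Your claimed limit $\operatorname{Vol}B(o_k,R)/(\theta_k\omega_nR^n)\to1$ is therefore not what the chain yields. It is compatible with the correct statement only if $\theta_k\to1$, which you have not shown. Relatedly, in your normalization the Gaussian does not have fixed width, since $V_{\R^n}(u_k^\star)\approx\theta_k^{-2/n}$.

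This misreading leads you to a cone limit with possibly $\theta_\infty<1$, and to the step you yourself flag as the main obstacle. That step is not a proof. The equality case in the Balogh--Krist\'aly P\'olya--Szeg\H{o} inequality is purely qualitative, and the asserted loss of order $1-\theta_k^{1/n}$ in $E_{M_k}(u_k)$ is only a heuristic.

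The missing idea is that none of this is needed. The theorem concerns the rescaled manifolds at fixed scales, not $\operatorname{AVR}(M_k)$. Suppose that in the metrics $\lambda_k^2g_k$ you have $\operatorname{Vol}B(o_k,R)\ge(1-\varepsilon_k(R))\,\omega_nR^n$ with $\varepsilon_k(R)\to0$ for each $R$. Then Colding's almost-maximal-volume theorem (Theorem 0.8 of his volume convergence paper) gives $d_{GH}\bigl(B(o_k,R),B^{\R^n}(0,R)\bigr)\to0$ for every $R$. That is pointed Gromov--Hausdorff convergence to $\R^n$, with no cone-splitting and no control of $\theta_k$.

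This is exactly how the paper argues in its Steps 4--6. Your choice $\lambda_k=V_{M_k}(u_k)^{-1/2}$ is fine, and is simpler than the paper's choice via the Euclidean stability result.
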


\begin{remark}
   In particular, if the non-concentration condition $V_{M_k}(u_k) \ge c_0 > 0$ holds, the scaling factors $\lambda_k$ cannot diverge to $+\infty$. As a result, the rescaling does not merely zoom into an infinitesimal neighborhood to capture tangent space, it preserves non-trivial global information about the underlying manifolds. Conversely, if $V_{M_k}(u_k) \rightarrow 0,$ one cannot infer, in general, that the rescaled manifolds converge to Euclidean space. While rescaling a fixed smooth manifold at a point yields its Euclidean tangent space, for a varying sequence of manifolds the corresponding rescaled spaces may converge to a limit with genuinely non-Euclidean geometry.
\end{remark}

As a direct consequence of this stability result, we recover the rigidity case of the HPW inequality: any manifold $(M,g)$ satisfying the hypotheses of the theorem that admits a non-trivial HPW-extremizer is isometric to $\mathbb{R}^n$ up to scaling the metric by a constant factor. Furthermore, the proof bypasses the rigidity case of the Pólya–Szegő inequality.

As another corollary of the above theorem we prove

\begin{corollary}\label{cor:best-const-1}
Let $(M,g)$ be a complete, non-compact, connected, Riemannian $n$-manifold with $\mbox{Ric}_M \geq 0.$ We further assume that $M$ is not isometric to the Euclidean space. Then 
\begin{align*}
\mbox{AVR}(M)^{\frac{2}{n}} < \inf_{u \in C_c^{\infty}(M), u \neq 0} \frac{\left( \int_M \vert{}\nabla u\vert{}^2 dV_g \right) \left( \int_M d(x,o)^2 u^2(x) dV_g \right)}{\frac{n^2}{4}\left( \int_M u^2 dV_g \right)^2} < 1.
\end{align*}
\end{corollary}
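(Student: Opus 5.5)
Let $\mathcal{Q}(u)$ denote the quotient inside the infimum, so the claim is $\mathrm{AVR}(M)^{2/n} < \inf \mathcal{Q} < 1$. The lower bound $\inf \mathcal{Q} \geq \mathrm{AVR}(M)^{2/n}$ is the inequality \eqref{HPW-theta}, and $\inf\mathcal{Q}\le 1$ follows from a concentration argument. The real content is that both inequalities are strict. I would prove each strict inequality by contradiction. For the lower bound the contradiction comes from Theorem~\ref{th:stability-3} applied to the constant sequence $M_k=M$. For the upper bound it comes from the rigidity theorem of Krist\'aly.

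\textbf{Upper bound.} First I show $\inf\mathcal{Q}\le 1$. Take $u_\varepsilon(x)=\varphi(d(x,o)/\varepsilon)$, a truncated Gaussian profile concentrated at $o$. In normal coordinates on $B(o,\varepsilon R)$ the metric is $\delta_{ij}+O(\varepsilon^2)$ and $d(x,o)=|x|$ exactly, so $\mathcal{Q}(u_\varepsilon)\to \mathcal{Q}_{\mathbb{R}^n}(\varphi)$. The Euclidean quotient of a truncated Gaussian is as close to $1$ as we like.

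Next I show that equality $\inf\mathcal{Q}=1$ is impossible. If $\inf\mathcal{Q}=1$, then $(\mathbf{HPW})_{o}$ holds on $M$ with the Euclidean constant $n^2/4$. Krist\'aly's dichotomy for $\mathrm{Ric}_M\ge0$ then forces $M$ to be isometric to $\mathbb{R}^n$, which contradicts the hypothesis. So $\inf\mathcal{Q}<1$.

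\textbf{Lower bound.} I can assume $\mathrm{AVR}(M)>0$. Otherwise the left side is $0$, and I must show $\inf\mathcal{Q}>0$. That is the only genuinely delicate subcase, and I would treat it at the end.

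Suppose $\inf\mathcal{Q}=\theta^{2/n}$ with $\theta=\mathrm{AVR}(M)$. Since $|\nabla|u||=|\nabla u|$ a.e., I may choose nonnegative $u_k\in C_c^\infty(M)$ (after mollifying $|u|$) with $\|u_k\|_{L^2}=1$ and $\delta(u_k;M,o)\to0$. Theorem~\ref{th:stability-3} then gives $\lambda_k>0$ with $\lambda_k^2V_M(u_k)\to1$ and $(M,\lambda_k^2g,o)\to\mathbb{R}^n$ in the pointed Gromov--Hausdorff sense, along a subsequence. Pass to a further subsequence so that $\lambda_k\to\lambda\in[0,\infty]$. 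There are three cases.

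\emph{Case $\lambda\in(0,\infty)$.} Here $(M,\lambda^2g,o)$ is isometric to $\mathbb{R}^n$. By Bishop--Gromov this forces $\theta=1$, so $M$ is flat and hence Euclidean, a contradiction.

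\emph{Case $\lambda_k\to0$.} The rescalings $(M,\lambda_k^2 g,o)$ converge to an asymptotic cone of $M$. By Cheeger--Colding volume convergence (valid because $\theta>0$), the limit has $\mathcal{H}^n(B(o,1))=\theta\omega_n$. Since the limit is $\mathbb{R}^n$, this gives $\theta=1$, again a contradiction.

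\emph{Case $\lambda_k\to\infty$.} The rescalings converge to the tangent cone at $o$, which is $\mathbb{R}^n$ anyway, so no contradiction appears directly. This is the main obstacle. I would handle it by showing that concentration of $u_k$ at $o$ is incompatible with $\delta(u_k)\to 0$ when $\theta<1$. When $u_k$ concentrates, the local Euclidean structure gives $E_M(u_k)V_M(u_k)\to$ a value $\ge n^2/4$ to leading order. Concretely, blow up by $\lambda_k$: the rescaled functions live on nearly Euclidean balls, where the sharp Euclidean HPW holds up to $o(1)$ errors. The unnormalized tail contributes nonnegatively, and controlling it relies on the a priori moment bound $\lambda_k^2V\to1$. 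This yields $\liminf \mathcal{Q}(u_k)\ge 1>\theta^{2/n}$, a contradiction.

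\textbf{The subcase $\mathrm{AVR}(M)=0$.} Here I would use the same trichotomy with the Poincar\'e-type lower bound $\mathcal{Q}\ge c(M)>0$ from local Euclidean behavior, though this is where I am least sure the argument closes cleanly.
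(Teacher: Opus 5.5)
Your proposal is correct and follows the paper's proof. The right-hand strict inequality comes from Krist\'aly's rigidity. For the left-hand one you argue by contradiction, splitting a minimizing sequence according to the scale of $\int_M d(x,o)^2u_k^2\,dV_g$, into bounded, escaping to infinity, and concentrating at $o$. In the escaping case you use Theorem~\ref{th:stability-3} plus Colding-type volume rigidity, and in the concentrating case a normal-coordinate blow-up gives $\liminf\mathcal{Q}(u_k)\ge 1$. Your one deviation is the bounded case: you use uniqueness of pointed Gromov--Hausdorff limits to get $(M,\lambda^2 g)\cong\mathbb{R}^n$ directly, whereas the paper first extracts an actual minimizer by weak $H^1$ compactness and then invokes the P\'olya--Szeg\"o equality case (your route appears there only parenthetically). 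The subcase $\mathrm{AVR}(M)=0$ that worries you is not needed. Section~5 of the paper works under the standing assumption $\theta>0$, and Theorem~\ref{th:stability-3} is not even defined otherwise, since its deficit divides by $\mathrm{AVR}^{2/n}$. Also, ``local Euclidean behavior'' could not settle that case anyway, since spreading sequences would need a global input such as a scale-invariant Poincar\'e inequality combined with reverse volume doubling.
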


In this regard, it is natural to ask what the optimal HPW inequality is for manifolds with non-negative Ricci curvature. The proof of such a theorem generally relies on three main ingredients: integration by parts, the Laplacian comparison theorem, and the Cauchy-Schwarz inequality. Unfortunately, in the setting of non-negative Ricci curvature, the Laplacian comparison theorem yields a reverse inequality, rendering it inconclusive for deriving an optimal bound directly. Therefore, the most effective approach is to retain the Laplacian of the distance function explicitly, which holds globally in a distributional sense \cite{Mantegazza_et_al}.

We formulate a corresponding HPW inequality that incorporates a curvature correction term for non-negative Ricci curvature. This can be compared to Kristály's result in Cartan-Hadamard manifolds; however, while his correction term is positive, ours is naturally negative due to the underlying geometry. For the inequality we derive, its optimality is apparent, with the Gaussian naturally emerging as the extremizer for this HPW principle.

Furthermore, we can recover the $\theta$-dependent HPW principle from our result for functions that are radially symmetric and decreasing. These additional hypotheses are necessary because the Laplacian of the distance function possesses a singular part concentrated on the cut locus of the base point. Consequently, when attempting to extend the result, the curvature correction term cannot be globally controlled solely by imposing $L^2$-normalizations. We refer the reader to Section~6 for a thorough discussion of these details.

Our next main results is a quantitative rigidity results associated with $\theta$-HPW uncertainty principle. 

We define the deficit as before:

\begin{align}\label{deficit}
\delta(M) := \delta\left(\frac{u}{\|u\|_{L^2}};M,x_0 \right), \quad\mbox{where}\quad u(x)=e^{-\frac{1}{2}d_{x_0}(x)^2},
\end{align}
where the later $\delta$ is defined as in \eqref{deficit for Ric}.
Regarding the the distance, one needs to be a bit careful, as the norm involving Ricci curvature is not enough. A Ricci flat manifold may not be isometric to the Euclidean space (for example the flat torus in dimension $n \geq 4$), unless the dimension is 2 and 3. The most direct choice is the quantity involving the asymptotic volume ratio:

\begin{align}\label{distance}
\text{dist}(M, \mathbb{R}^n) := (1 - \theta^{1/n}), \ \theta =\operatorname{AVR}(M).
\end{align}
Here the distance is modulo an isometry: $\text{dist}(M, \R^n) = 0$ if and only if $M$ is isometric to $\mathbb{R}^n,$ just as in the deficit. Then we have the following quantitative rigidity result:

\begin{theorem}\label{th:stability-2}
For every $\theta_{*}>0$, there exists a constant $C=C(n,\theta_{*})>0$ such that for every complete, non-compact $n$-dimensional Riemannian manifold $(M,g)$ satisfying $\text{Ric}_M \geq 0$ and $\operatorname{AVR}(M)=\theta \geq \theta_{*},$  the following estimate holds:
\begin{align*}
\delta(M) \geq C \theta^{2-1/n} \mbox{dist}(M, \mathbb{R}^n),
\end{align*}
where $\delta(M)$ and $\text{dist}(M, \R^n)$ are given as in \eqref{deficit} and \eqref{distance}. 
\end{theorem}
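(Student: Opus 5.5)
The plan is to compute everything explicitly for the Gaussian $u=e^{-\frac12 d_{x_0}^2}$ and reduce the statement to a one-dimensional extremal problem governed by Bishop--Gromov. Since $|\nabla_g d_{x_0}|=1$ a.e., we have $|\nabla_g u|^2=d_{x_0}^2e^{-d_{x_0}^2}=d_{x_0}^2u^2$. Hence $E_M(u)=V_M(u)$, and
\begin{align*}
\delta(M)=\frac{4}{n^2\theta^{2/n}}\left(\frac{\int_M d_{x_0}^2e^{-d_{x_0}^2}dV_g}{\int_M e^{-d_{x_0}^2}dV_g}\right)^2-1 .
\end{align*}
Write $A(r)=\mathcal H^{n-1}(\partial B(x_0,r))=n\omega_n r^{n-1}h(r)$. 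By the coarea formula both integrals are radial. Bishop--Gromov for $\mathrm{Ric}\ge0$ gives that $h$ is non-increasing with $h(0^+)=1$ and $h(\infty)=\theta$; the last fact follows since $A(r)/r^{n-1}$ and $V(r)/r^n$ have the same limit. Thus the ratio above equals
\begin{align*}
R[h]:=\frac{\int_0^\infty r^{n+1}e^{-r^2}h(r)\,dr}{\int_0^\infty r^{n-1}e^{-r^2}h(r)\,dr},
\end{align*}
which equals $n/2$ when $h\equiv 1$ or $h$ is constant. So it suffices to show
\begin{align*}
\inf\{R[h]: h \text{ non-increasing}, \ h(0^+)=1,\ h(\infty)=\theta\}\ge \tfrac n2\,\theta^{1/n}\bigl(1+c(1-\theta^{1/n})\bigr).
\end{align*}
Squaring this and using $\delta=(2R/(n\theta^{1/n}))^2-1$ gives a lower bound linear in $1-\theta^{1/n}$. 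I will track the $\theta$-powers to fit the claimed $\theta^{2-1/n}$, which is weak since $\theta\ge\theta_*$.

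To handle the infimum, I will use the Choquet-type representation $h=\theta+\int 1_{[0,s]}\,d\nu(s)$, where $\nu\ge0$ has total mass $1-\theta$. Numerator and denominator of $R$ are both linear in $\nu$, so $R[h]$ is a ratio of positive linear functionals on this convex set. Such a ratio is minimized at extreme points, namely $\nu=(1-\theta)\delta_s$, i.e. $h_s=\theta+(1-\theta)1_{[0,s]}$ with $s\in[0,\infty]$. For these,
\begin{align*}
R[h_s]=\frac{\theta a+(1-\theta)a(s)}{\theta b+(1-\theta)b(s)},\qquad a(s)=\int_0^s r^{n+1}e^{-r^2}dr,\ \ b(s)=\int_0^s r^{n-1}e^{-r^2}dr,
\end{align*}
with $a=a(\infty)=\Gamma(\tfrac n2+1)/2$ and $b=\Gamma(\tfrac n2)/2$. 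This yields an explicit function $m(\theta)=\inf_s R[h_s]$ of one parameter. As a consistency check, the known $\theta$-HPW inequality \eqref{HPW-theta} already forces $m(\theta)\ge \frac n2\theta^{1/n}$.

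The remaining task, which I expect to be the main obstacle, is a quantitative gap $m(\theta)-\frac n2\theta^{1/n}\ge c(n,\theta_*)(1-\theta^{1/n})$ uniformly for $\theta\in[\theta_*,1]$.
\begin{itemize}
\item Away from $\theta=1$, i.e. on $[\theta_*,1-\epsilon]$, I will argue by continuity and compactness in $(\theta,s)$, with $s=\infty$ and $s=0$ included. It suffices to have strict inequality $R[h_s]>\frac n2\theta^{1/n}$ for $\theta<1$. At $s\in\{0,\infty\}$ we get $R=n/2>\frac n2\theta^{1/n}$. For intermediate $s$ this should follow from an elementary one-variable comparison, using that $a(s)/b(s)<n/2$ is increasing in $s$ while the mixing weight of $\theta$ keeps the ratio bounded below.
\item Near $\theta=1$, I will expand in $1-\theta$. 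Here $\frac n2\theta^{1/n}\approx\frac n2-\frac12(1-\theta)$, while
\begin{align*}
R[h_s]\approx \frac n2-(1-\theta)\,\frac{\frac n2(b-b(s))-(a-a(s))}{b}.
\end{align*}
So I need $\sup_s\bigl[\tfrac n2(b-b(s))-(a-a(s))\bigr]<b/2$ strictly. I will check this via the identity $a(s)=\frac n2 b(s)-\frac12 s^ne^{-s^2}$, obtained by integration by parts. It gives $\tfrac n2(b-b(s))-(a-a(s))=-\frac12 s^ne^{-s^2}\le0<b/2$, hence a linear gap.
\end{itemize}
Combining the two regimes and absorbing the $\theta$-powers, using $\theta\ge\theta_*$, gives the constant $C(n,\theta_*)$.
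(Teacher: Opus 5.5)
Your reduction of $\delta(M)$ to the radial ratio $R[h]$ is correct, and so is the mediant (extreme-point) step. The gap is the class you minimize over: on all non-increasing $h$ with $h(0^+)=1$, $h(\infty)=\theta$, the bound you want is false for every $n\ge 4$, so no argument over that class can close it. With your identity $a(s)=\frac n2 b(s)-\frac12 s^ne^{-s^2}$ one gets exactly
\[
R[h_s]=\frac n2-\frac{(1-\theta)\,s^ne^{-s^2}}{2\bigl(\theta b+(1-\theta)b(s)\bigr)}\ \le\ \frac n2 .
\]
So your first-order expansion has the wrong sign. The correct expansion is $R[h_s]\approx\frac n2+\frac{1-\theta}{b}\bigl[\frac n2(b-b(s))-(a-a(s))\bigr]=\frac n2-(1-\theta)\frac{s^ne^{-s^2}}{2b}$. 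Compared with $\frac n2\theta^{1/n}\approx\frac n2-\frac12(1-\theta)$, you actually need $\max_s s^ne^{-s^2}=\bigl(\tfrac{n}{2e}\bigr)^{n/2}<b=\tfrac12\Gamma(\tfrac n2)$. This first-order condition holds for $n=2,3$ but fails for every $n\ge4$; for $n=4$, $4e^{-2}\approx0.541>\frac12$. Concretely, $n=4$, $\theta=0.9$, $s=\sqrt2$ gives $R[h_s]\approx1.9436<1.9480\approx2\theta^{1/4}$. Hence even $m(\theta)\ge\frac n2\theta^{1/n}$ fails, and the strict inequality you hoped to get ``by an elementary one-variable comparison'' for intermediate $s$ is false too. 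Your consistency check is where the reasoning breaks: \eqref{HPW-theta} constrains only profiles coming from manifolds with $\mathrm{Ric}\ge0$ and $\operatorname{AVR}=\theta$, and $h_s$ is not such a profile.

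The missing ingredient is the sharp isoperimetric inequality (Brendle), which is the real input behind \eqref{HPW-theta}. Bishop--Gromov monotonicity of $h$ alone does not even yield $R[h]\ge\frac n2\theta^{1/n}$ when $n\ge4$. Applied to balls, the isoperimetric inequality gives, as in \eqref{iso-2}, $rA(r)\ge n\theta^{1/n}V(r)$, i.e.\ $h(r)\ge\theta^{1/n}V(r)/(\omega_nr^n)$. This rules out abrupt drops such as $h_s$: just past $s$ one has $h=\theta<\theta^{1/n}$ while $V(r)/(\omega_nr^n)\approx1$. The constraint is linear in $V$, so the admissible set stays convex and an extreme-point reduction could still be tried. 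However, the extremal profiles and the resulting one-variable problem are genuinely different, and both of your regimes would have to be redone.

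For comparison, the paper uses the same Stieltjes--Fubini decomposition: its $S$ is your $h$, and its kernel $J(s)$ (at $\alpha=1$, $J(s)=\frac n2(1-\theta^{1/n})b(s)-\frac12 s^ne^{-s^2}$) is exactly the quantity whose sign your extreme-point computation tests. The paper notes that $J<0$ on $(0,r^*_\alpha)$ and sidesteps this sign problem by taking $\alpha$ large depending on $M$. It then tries to return to $\alpha=1$ by rescaling and compactness, and never claims a bound uniform over monotone profiles at a fixed Gaussian width. That transfer step is itself delicate as written: $\mathbb{R}^n\in\mathcal M(n,\theta_*)$ and $\tilde I(u_1)_{\mathbb{R}^n}=0$, so the asserted $m_0(n,\theta_*)>0$ cannot hold.
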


The stability and almost-rigidity of geometric spaces under nearly sharp functional inequalities have been extensively investigated in the literature. Without attempting to be exhaustive, we mention a few works \cites{CH, Ledoux, Xia, doCarmoXia04, NobiliViolo22}, together with the references therein and relevant survey articles \cites{Nobili25}. We apologize for the inevitable incompleteness of this selective list.

The article is organized as follows: Section 2, Preliminaries, reviews the core Riemannian geometry tools and various results that will be used in subsequent sections. In Sections 3 and 4, we prove the stability results for negatively curved manifolds, namely Theorem~\ref{th:stability-4} and Theorem~\ref{th:stability}. Section 5 deals with complete, non-compact Riemannian manifolds with $Ric_M \geq 0$, where we prove Theorem~\ref{th:stability-3}, Theorem~\ref{th:stability-2}, and Corollary~\ref{cor:best-const-1}. In Section 6, we discuss the optimal HPW inequality for positively curved manifolds. Finally, Section 7 is an appendix that outlines the rigorous form of the HPW Cartan-Hadamard rigidity result.


\section{Preliminaries : Geometric frameworks and Preparatory Results}

Let $(M,g)$ be an $n$-dimensional Riemannian manifold. We denote the open geodesic ball centered at $x \in M$ with radius $r > 0$ by $B_x(r) = \{y \in M : d(x,y) < r\}$. The canonical volume element on $(M,g)$ is denoted by $dV_g$. For any smooth Riemannian manifold, small geodesic balls satisfy the Euclidean asymptotic limit:
\begin{align*}
\lim_{r \to 0^+} \frac{\operatorname{Vol}_g(B_x(r))}{\omega_n r^n} = 1 \quad \forall\, x \in M,
\end{align*}
where $\omega_n$ denotes the volume of the unit ball in $\mathbb{R}^n$. 

Let $u : M \to \mathbb{R}$ be a $C^1$ function. In a local coordinate neighborhood with coordinates $(x^i)$, the local components of the differential are $u_i = \frac{\partial u}{\partial x_i}$, and the gradient field $\nabla_g u$ has components $u^i = g^{ij} u_j$, where $g^{ij} = (g_{ij})^{-1}$. Setting $\vert{}g\vert{} := \det(g_{ij})$, the Laplace–Beltrami operator $\Delta_g u = \operatorname{div}_g(\nabla_g u)$ is expressed locally as:
\begin{align*}
\Delta_g u = \frac{1}{\sqrt{\vert{}g\vert{}}} \frac{\partial}{\partial x^i} \left( \sqrt{\vert{}g\vert{}}\, g^{ij} \frac{\partial u}{\partial x^j} \right).
\end{align*}
For functions $u, v \in C^2(M)$ with $v$ having compact support, the standard integration by parts formula holds:
\begin{align*}
\int_M v \Delta_g u \, dV_{g} = -\int_M \langle \nabla_g v, \nabla_g u \rangle_g \, dV_{g}.
\end{align*}

\subsection{Polar Coordinates} 
Fix $x_0 \in M$. Since the exponential map is a local diffeomorphism near $0 \in T_{x_0}M$, writing $v = \rho\theta$ with $\rho > 0$ and $\theta \in S_{x_0}M ( \ \mbox{the unit tangent bundle}) \simeq \mathbb{S}^{n-1}$ defines geodesic polar coordinates by $x = \exp_{x_0}(\rho\theta).$ 
In general, these coordinates are defined only up to the cut locus of $x_0$. However, if $M$ is a Cartan-Hadamard manifold, $\exp_{x_0}$ is a global diffeomorphism, so they are globally defined on $M \setminus \{x_0\}$.

In this coordinates,
\begin{align*}
g = d\rho^2 + g_{\rho} = d\rho^2 + g_{ij}(\rho,\theta)\, d\theta^i d\theta^j.
\end{align*}
and the Laplace-Beltrami operator decomposes into radial and spherical components:
\begin{align*}
\Delta_g = \frac{\partial^2}{\partial \rho^2} + m(\rho,\theta)\frac{\partial}{\partial \rho} + \Delta_{\partial B(x_0,\rho)},
\end{align*}
where $\Delta_{\partial B(x_0,\rho)}$ is the Laplacian on the geodesic sphere $\partial B(x_0,\rho)$, and $m(\rho,\theta) = \frac{\partial}{\partial \rho} \left( \ln \sqrt{\vert{}g\vert{}} \right)$ represents the mean curvature of $\partial B(x_0,\rho)$ in the radial direction. We recall a result from \cites{GW, G}(also see \cite[Lemma 4.5]{BGG}).
\begin{lemma}\cites{GW, G}\label{vol bound}
Let $M$ be a $n$ dimensional Cartan-Hadamard manifold and $K_{rad}$  denotes it's sectional curvature in the radial direction. If 
$$K_{rad}(x)\leq -\frac{\psi''}{\psi} \quad\forall\, x\in M,$$
where $\psi$ is positive $C^2$ function such that $\psi(0)=0=\psi''(0)$, $\psi'(0)=1$ then 
$$m(\rho,\theta)\geq(n-1)\frac{\psi'(\rho)}{\psi(\rho)} \quad\forall\, r>0,\, \theta\in\mathbb{S}^{n-1}.$$
\end{lemma}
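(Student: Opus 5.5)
The plan is to reduce the statement to a Riccati comparison along each radial geodesic. Since $M$ is Cartan--Hadamard, $\rho=d_{x_0}$ is smooth on $M\setminus\{x_0\}$. Moreover $m(\rho,\theta)=\partial_\rho\ln\sqrt{|g|}=\Delta_g\rho$ is the trace of the shape operator $S(\rho)=\nabla^2\rho|_{\partial\rho^\perp}$ of the geodesic sphere $\partial B(x_0,\rho)$ at $\gamma_\theta(\rho)$. Throughout, the hypothesis on $\psi$ is read as: $\psi>0$ on $(0,\infty)$, with $\psi(0)=0$, $\psi'(0)=1$, $\psi''(0)=0$.

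Fix $\theta$ and work in the parallel orthonormal frame $(e_i)_{i=1}^{n-1}$ along $\gamma_\theta$ introduced above. Then $S$ becomes a symmetric $(n-1)\times(n-1)$ matrix satisfying the matrix Riccati equation
\begin{align*}
S'+S^2+R=0, \qquad R_{ij}=\langle R(e_i,\dot\gamma_\theta)\dot\gamma_\theta,e_j\rangle .
\end{align*}
Near the pole we have the standard expansion $S(\rho)=\rho^{-1}I+O(\rho)$, which comes from the Jacobi field description $J(\rho)=\rho E+O(\rho^3)$. The comparison function is $s(\rho)=\psi'(\rho)/\psi(\rho)$, which solves the scalar equation $s'+s^2=\psi''/\psi$. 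Because $\psi(0)=0$, $\psi'(0)=1$ and $\psi''(0)=0$, it also satisfies $s(\rho)=\rho^{-1}+O(\rho)$.

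Tracing the Riccati equation only gives an upper bound for $m$, since $\operatorname{tr}S^2\ge m^2/(n-1)$ points the wrong way. So I will instead prove the Hessian-type bound $\lambda(\rho)\ge s(\rho)$, where $\lambda(\rho)$ is the smallest eigenvalue of $S(\rho)$; taking the trace then gives $m\ge (n-1)\lambda\ge (n-1)\psi'/\psi$.

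For the bound on $\lambda$, let $E$ be a unit eigenvector of $S(\rho_1)$ for $\lambda(\rho_1)$, and extend it as a parallel field. The function $f(\rho)=\langle S(\rho)E,E\rangle$ satisfies $f\ge\lambda$ near $\rho_1$, with $f(\rho_1)=\lambda(\rho_1)$ and
\begin{align*}
f'(\rho_1)=-|S E|^2-K_g(\dot\gamma_\theta\wedge E)=-\lambda^2-K_{rad}\ \ge\ -\lambda^2+\frac{\psi''}{\psi}.
\end{align*}
Hence $\lambda$ is a locally Lipschitz function that satisfies $\lambda'\ge-\lambda^2+\psi''/\psi$ in the sense of lower Dini derivatives (a barrier argument). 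Set $w=\lambda-s$. Then $w'\ge-(\lambda+s)\,w$ in the same weak sense. Multiply by the integrating factor $\Phi(\rho)=\exp\int_{\rho_0}^{\rho}(\lambda+s)$, which behaves like $C\rho^2$ as $\rho\to0$ because $\lambda+s\sim 2/\rho$. This makes $\Phi w$ non-decreasing. Near the pole $w=O(\rho)$, so $\Phi w=O(\rho^3)\to0$ as $\rho\to0^+$. Therefore $\Phi w\ge0$, i.e.\ $\lambda\ge s$ on $(0,\infty)$.

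The main obstacle is this last comparison step. One part is rigor with the non-smooth $\lambda_{\min}$, which I handle with the barrier/Dini argument; alternatively one can cite the Eschenburg--Heintze matrix Riccati comparison directly. The other part is controlling the singular behavior at $\rho=0$, where both $\lambda$ and $s$ blow up like $1/\rho$. This is precisely where the normalizations $\psi'(0)=1$ and $\psi''(0)=0$ enter, since they make $\lambda-s$ vanish at the pole fast enough to beat the $\rho^{-2}$ growth of $\Phi^{-1}$.
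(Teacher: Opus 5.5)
Your argument is correct. The paper does not prove this lemma itself: it quotes it from Greene--Wu and Grigor'yan, where it comes from tracing the Hessian comparison $\nabla^2\rho\ge \frac{\psi'}{\psi}\,g_\rho$. That comparison is usually proved with Jacobi fields and the index lemma (a Rauch-type argument). You reach the same Hessian comparison differently, through the matrix Riccati equation and a barrier for $\lambda_{\min}(S)$. Your version is self-contained and uses the same Riccati formalism the paper works with in Section 4 and in the appendix. You also correctly see why the traced Riccati equation cannot be used directly: $\operatorname{tr}S^2\ge m^2/(n-1)$ bounds $m'$ from above, so it only produces upper bounds on $m$.

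Two points need tightening, though neither is a gap.

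First, the direction of the Dini derivative. An upper barrier $f\ge\lambda$ with $f(\rho_1)=\lambda(\rho_1)$ controls only the \emph{left} lower Dini derivative: $D_-\lambda(\rho_1)\ge f'(\rho_1)$. From the right it gives only an upper bound on $D^+\lambda$. This is still enough. Since $\Phi$ is $C^1$, positive, and $w$ is locally Lipschitz, you get $D_-(\Phi w)=\Phi\,\bigl(D_-w+(\lambda+s)w\bigr)\ge 0$ at every point. A continuous function with one Dini derivative nonnegative everywhere on an interval is nondecreasing, so the monotonicity of $\Phi w$ follows.

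Second, the expansion of $s$ near the pole. For $\psi\in C^2$ you only get $s=\rho^{-1}+o(1)$, not $\rho^{-1}+O(\rho)$. This does not matter, because boundedness of $w$ near $0$ already gives $\Phi w=O(\rho^2)\to 0$. For the same reason, $\psi''(0)=0$ and the normalization $\psi'(0)=1$ play no role in the comparison. Only $\psi(0)=0<\psi'(0)$ is needed, since it yields $s=\rho^{-1}+O(1)$, and $\psi'/\psi$ is invariant under rescaling $\psi$. Indeed, the paper itself applies the lemma in the appendix with $\psi=\sinh(\kappa\rho)$, for which $\psi'(0)=\kappa$.
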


\subsection{Comparison Geometry} Let $d_{x_0}(x) := d(x_0, x)$ denote the distance function from the pole and let $\mathbf{ct}_c(\cdot)$ be defined as in \eqref{c-rho}. 

\subsubsection{Hessian and Laplacian Comparison \cite[Theorem 11.11]{J Lee}, \cite[Theorem 6.4.3]{Peter}} 

If $(M,g)$ is a Cartan-Hadamard manifold with sectional curvature bounded above by $c \leq 0$, the following hold in the distributional sense:
\begin{align*}
\operatorname{Hess} \ d_{x_0} \ge \mathbf{ct}_c(d_{x_0}) g_{\rho}, \ \ \Delta_g d_{x_0} \ge (n-1)\mathbf{ct}_c(d_{x_0}).
\end{align*}

Equality in the Laplacian comparison holds if and only if $(M,g)$ is isometric to the constant curvature space form $\mathbb{H}^n_c$ if $c <0$ or $\mathbb{R}^n$ if $c = 0$. For Cartan-Hadamard manifolds, the second part was stated in \cite[Theorem 2.1]{KKPZ}. For completeness, and to make the argument self-contained, we provide a proof of this assertion in the Appendix as it is essential in the rigidity of the HPW-principle stated in the introduction.

Let $\gamma : [0, T) \to M$ be a unit-speed geodesic issuing from $x_0 = \gamma(0)$, so that $d_{x_0}(\gamma(t)) = t$ and $\vert{}\dot{\gamma}(t)\vert{} = 1$ for all $t > 0$. At $\gamma(t)$, the level set of the distance function is the geodesic sphere $S_t(x_0) = \{y \in M : d_{x_0}(y) = t\}$. Its tangent space $T_{\gamma(t)}S_t(x_0) = \dot{\gamma}(t)^\perp$ is the orthogonal complement of the radial direction $\operatorname{span}\{\dot{\gamma}(t)\}$.

Let $\pi_t : T_{\gamma(t)}M \to T_{\gamma(t)}S_t(x_0) \subset T_{\gamma(t)}M$ be the orthogonal projection, given by $\pi_t(X) = X - g(X, \dot{\gamma}(t)) \dot{\gamma}(t)$ for all $X \in T_{\gamma(t)}M.$ Along $\gamma(t)$, the Hessian comparison inequality seen as a $(1,1)$-tensor can be expressed in the following equivalent way: 
\begin{align*}
\mbox{Hess} \ d_{x_0}|_{\gamma(t)} \ge \mathbf{ct}_c(t) \, \pi_t.
\end{align*}

\subsubsection{Volume Comparison} The following comparision holds:

\medskip
\noindent
Bishop-G\"unther (Sectional Curvature Bounds) (\cite[Theorem III.4.2]{Chavel} and \cite[Theorem III.4.4]{Chavel}): If the sectional curvature $K$ satisfies $c_0 \leq K \leq c \leq 0$, then the volume of geodesic balls is bounded by the corresponding space forms:
\begin{align*}
\operatorname{Vol}_{\mathbb{H}^n_c}(B_{x}(r)) \leq \operatorname{Vol}_g(B_{x}(r)) \leq \operatorname{Vol}_{\mathbb{H}^n_{c_0}}(B_{x}(r)).
\end{align*}
Equality on either side for some $r$ implies the ball is isometric to a ball in the respective space form. 

\medskip
\noindent
Bishop-Gromov (Ricci Curvature Bound \cite[Theorem 11.19]{JLee}): If $M$ has non-negative Ricci curvature ($\operatorname{Ric_M} \geq 0$), the ratio $r \mapsto \frac{\operatorname{Vol}_g(B_x(r))}{r^n}$ is non-increasing. Consequently:
\begin{align*}
\operatorname{Vol}_g(B_x(r)) \leq \omega_n r^n \quad \forall\, x \in M, \ r > 0.
\end{align*}

Equality holds if and only if the sectional curvature is identically zero.

\subsection{Gromov-Hausdorff (GH) Convergence} 
\begin{itemize}
    \item[$\bullet$] Let $(X_k, d_k)$ and $(X, d)$ be compact metric spaces. We say that $(X_k, d_k)$ converges to $(X, d)$ in the Gromov-Hausdorff topology, written: $(X_k, d_k) \xrightarrow{\text{GH}} (X, d)$ if $d_{\text{GH}}(X_k, X) \to 0$ as $k \to \infty$. The Gromov-Hausdorff distance between two compact metric spaces $X$ and $Y$ is defined as:
$$d_{\text{GH}}(X, Y) = \inf_{Z, f, g} d_H^Z\big(f(X), g(Y)\big)$$
where the infimum is taken over all metric spaces $(Z, d_Z)$ and isometric embeddings $f: X \to Z$ and $g: Y \to Z$, and $d_H^Z$ denotes the Hausdorff distance in $Z$.

 \item[$\bullet$] Pointed Gromov-Hausdorff Convergence: For pointed proper metric spaces $(X_k, d_k, x_k)$ and $(X, d, x)$ (not necessarily compact), we say $(X_k, d_k, x_k) \xrightarrow{\text{pGH}} (X, d, x)$ if, for every radius $R > 0$, the closed balls $\overline{B}_R(x_k) \subset X_k$ converge to $\overline{B}_R(x) \subset X$ in the GH topology.
 
\item[$\bullet$] Pointed Measured Gromov-Hausdorff Convergence: If the spaces are equipped with locally finite Borel measures $\mu_k$ and $\mu$, we say: $(X_k, d_k, \mu_k, x_k) \xrightarrow{\text{pmGH}} (X, d, \mu, x)$ if $(X_k, d_k, x_k) \xrightarrow{\text{pGH}} (X, d, x)$ and, under corresponding isometric embeddings $f_k: X_k \to Z$ and $f: X \to Z$ into a common metric space $Z$, the pushforward measures converge weakly: $(f_k)_\# \mu_k \rightharpoonup f_\# \mu.$ For pointed Riemannian manifolds $(M_k,g_k,x_k)$, we write
\begin{align*}
    (M_k,g_k,x_k)\xrightarrow{pGH}(X,d,x)
\quad\text{or}\quad
(M_k,g_k,x_k,\operatorname{vol}_{g_k})\xrightarrow{pmGH}(X,d,x,\mu),
\end{align*}
where $d_{g_k}$ and $\operatorname{vol}_{g_k}$ are the distance and volume measure induced by $g_k$.

\item[$\bullet$] We will use several standard compactness results in this setting, and will provide the appropriate references as they arise. For a concise introduction and compactness results we refer to \cite[Chapter 27]{Villani} and \cite[Chapter 11]{Peter}, and for a broader treatment and perspective, we refer to Gromov's monograph \cite{Gromov}.
 \end{itemize}

\subsection{Rigidity of HPW inequality} We end this section with the following rigidity result of the Cartan-Hadamard manifolds.

\begin{proposition} \label{rigidity thm}
Let $(M,g)$ be a $n$-dimensional Cartan-Hadamard manifold of sectional curvature bounded above by $c \leq 0.$ Assume that the equality is achieved in $\mbox{\bf{(HPW)}}_{x_0}^c$. Then $(M,g)$ is isometric to $\hn_c$, the hyperbolic space with constant sectional curvature $c$.
\end{proposition}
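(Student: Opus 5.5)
We retrace the proof of $\mbox{\bf{(HPW)}}_{x_0}^c$ and track where each inequality must be an equality. Let $u\in C_c^\infty(M)$ (or an extremal in the natural energy space) with $u\not\equiv 0$ realize equality. Write $\rho=d_{x_0}$; $\rho^2$ is smooth on the Cartan–Hadamard manifold $M$. The proof of the inequality integrates by parts against the vector field $\rho\nabla\rho=\tfrac12\nabla \rho^2$. This gives
\begin{align*}
\int_M u^2\,\tfrac12\Delta_g(\rho^2)\,dV_g = -\int_M \rho\, u\,\langle \nabla_g u,\nabla_g\rho\rangle\,dV_g \le \Big(\int_M|\nabla_g u|^2\Big)^{1/2}\Big(\int_M\rho^2u^2\Big)^{1/2}.
\end{align*}
Next we use $\tfrac12\Delta_g\rho^2 = 1+\rho\Delta_g\rho$ together with the Laplacian comparison $\Delta_g\rho\ge (n-1)\mathbf{ct}_c(\rho)$. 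This yields $\tfrac12\Delta_g\rho^2\ge n+(n-1)\mathbf{D}_c(\rho)=n\big(1+\tfrac{n-1}{n}\mathbf{D}_c(\rho)\big)$. Squaring and dividing by $4$ gives the inequality.

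Equality in the squared inequality forces equality in both intermediate steps. The left-hand side is positive, so we may take square roots. Two consequences follow.

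First, equality in Laplacian comparison integrated against $u^2$ gives
\begin{align*}
\int_M u^2\,\rho\,\big(\Delta_g\rho-(n-1)\mathbf{ct}_c(\rho)\big)\,dV_g=0 .
\end{align*}
The integrand is non-negative and continuous on $M\setminus\{x_0\}$. Hence $\Delta_g\rho=(n-1)\mathbf{ct}_c(\rho)$ on the open set $\{u\neq0\}\setminus\{x_0\}$.

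Second, equality in Cauchy–Schwarz, including the sign of $-\int \rho u\,\partial_\rho u$, gives $\nabla_g u=-\lambda\rho u\nabla\rho$ for some $\lambda>0$. So $u$ is radial along each geodesic, and $\partial_\rho(\log|u|)=-\lambda\rho$ wherever $u\ne0$.

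Integrating the second consequence along each radial geodesic $\gamma_\theta$ gives $|u(\gamma_\theta(t))|=|u(\gamma_\theta(s))|e^{-\lambda(t^2-s^2)/2}$ on any interval where $u\neq0$. Such a solution of a linear ODE cannot reach zero in finite time. Therefore, if $u(\gamma_\theta(s))\ne0$ at one point, then $u\neq0$ on the whole ray. Connectedness of $M\setminus\{x_0\}$ (for $n\ge2$) then shows $u=Ae^{-\lambda\rho^2/2}$ everywhere. In particular $u$ never vanishes.

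Strictly, a compactly supported $u$ cannot be an extremal, since $u$ would be a Gaussian. The statement should be read for extremals in the closure (the weighted $H^1$ space). The same argument applies there: use a density/approximation to justify the integration by parts, and run the Cauchy–Schwarz equality a.e. I expect this functional-analytic justification to be the main technical obstacle, together with the a.e. version of the ODE argument. For the latter I would use absolute continuity of $u$ on almost every ray, i.e. the ACL property in polar coordinates.

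Once $u>0$ everywhere, we have $\Delta_g\rho=(n-1)\mathbf{ct}_c(\rho)$ on all of $M\setminus\{x_0\}$. Now invoke the equality case of the Laplacian comparison stated in the preliminaries. Its proof, deferred to the appendix, runs as follows. Equality in $m(\rho,\theta)\ge(n-1)\mathbf{ct}_c$ combined with the Hessian comparison $\operatorname{Hess}\rho\ge\mathbf{ct}_c\pi_t$ forces $\operatorname{Hess}\rho=\mathbf{ct}_c(\rho)\,\pi_t$, since a non-negative trace-zero symmetric tensor vanishes. The Riccati equation along radial geodesics then gives $K(\dot\gamma\wedge e_i)=c$ and Jacobi fields equal to $\tfrac{\sinh(\sqrt{|c|}t)}{\sqrt{|c|}}E_i(t)$. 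Hence the metric in polar coordinates is $d\rho^2+\big(\sinh(\sqrt{|c|}\rho)/\sqrt{|c|}\big)^2h$, and $\exp_{x_0}$ composed with the model exponential map is an isometry onto $\hn_c$ (or onto $\mathbb{R}^n$ if $c=0$).
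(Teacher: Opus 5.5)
Your argument is correct. Its skeleton is the paper's: integrate by parts against $\rho\nabla_g\rho$, apply Laplacian comparison and Cauchy--Schwarz, force equality in the Laplacian comparison, then invoke its rigidity. It differs from the paper in two substantive places.

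First, the paper goes straight from equality in the chain to $\Delta_g\rho=(n-1)\mathbf{ct}_c(\rho)$ everywhere. The integral identity only gives this on $\{u\neq0\}$. Your use of the equality case of Cauchy--Schwarz, identifying $u=Ae^{-\lambda\rho^2/2}$ and hence nowhere vanishing, is exactly what closes that point. It also explains why extremals must be taken outside $C_c^\infty(M)$; the paper tacitly does this by controlling the boundary terms on $\partial B(x_0,r)$ along a sequence $r\to\infty$. You do not need an ACL argument on rays for the weak version. As in Step~4 of the proof of Theorem~\ref{th:stability-4}, $f=ue^{\lambda\rho^2/2}\in H^1_{\rm loc}(M)$ satisfies $\nabla_g f=0$ a.e., so $f$ is constant on the connected manifold $M$.

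Second, the rigidity of the Laplacian equality is handled differently. The paper (Lemma~\ref{Laplace rigidity lemma}) passes from Hessian equality to radial curvature $\equiv c$. Via Lemma~\ref{vol bound} it gets $\sqrt{|g|}=\varphi(\theta)\sinh^{n-1}(\kappa\rho)$, then model volumes of balls about $x_0$ and then about every point. It finishes with the equality case of Bishop--G\"unther together with \cite[Proposition 4.14(2)]{BF}. You instead go directly from $\operatorname{Hess}\rho=\mathbf{ct}_c(\rho)\pi_t$ to the normal Jacobi fields. These solve $J'=\mathbf{ct}_c J$, so $J(t)=s_c(t)E(t)$ with $E$ parallel, where $s_c(t)=\sinh(\sqrt{|c|}t)/\sqrt{|c|}$ (and $s_c(t)=t$ if $c=0$). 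Thus $\exp_{x_0}^*g=d\rho^2+s_c(\rho)^2h$, and composing with a linear isometry and the inverse model exponential map gives an explicit global isometry. Your route is self-contained and does not need the curvature step at all. It also treats $c=0$ and $c<0$ uniformly, whereas the appendix lemma is stated only for $c<0$. The paper's volume route mirrors the volume-rigidity mechanism it later applies to the non-smooth limit in Theorem~\ref{th:stability-4}, at the cost of more external machinery.

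One small slip: your displayed identity should read $\int_M u^2\,\tfrac12\Delta_g(\rho^2)\,dV_g=-2\int_M\rho u\langle\nabla_g u,\nabla_g\rho\rangle\,dV_g$. The missing factor $2$ is what produces the constant $n^2/4$; it does not affect the equality analysis.
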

This result is mentioned in a remark of \cite[Remark~5.1]{KKPZ}. For reader's convenience we present here a proof of the above result. 

\begin{proof}
Let $u$ be an extremal for
$\text{\bf{(HPW)}}_{x_0} ^c$.  Using the Laplace comparison, we have
\begin{align}\label{J23-1}
\int_M \Delta_g(d_{x_0}^2)\, u^2 \, dV_{g} 
&= 2 \int_M \bigl(1 + d_{x_0}\,\Delta_g d_{x_0}\bigr)\, u^2 \, dV_{g} \nonumber\\
&\ge 2 \int_M \bigl(1 + (n-1)\, d_{x_0}\,\text{{\bf ct}}_c(d_{x_0})\bigr)\, u^2 \, dV_{g} \nonumber\\
&= 2n \int_M \left(1 + \frac{n-1}{n} \text{{\bf D}}_c(d_{x_0}) \right) u^2 \, dV_{g}.
\end{align}

On the other hand, since $\Delta_g(d_{x_0}^2)$ is non-negative, by monotone convergence theorem, $$\int_M \Delta_g(d_{x_0}^2)\, u^2 \, dV_{g} = \lim_{r \rightarrow \infty} \int_{B(x_0,r)}\Delta_g(d_{x_0}^2)\, u^2 \, dV_{g},$$
along every sequence $r \rightarrow \infty$. In addition, since $\int_M d_{x_0}^2 u^2 dV_{g} < \infty,$ by tranforming the intergral in to polar co-ordinates, we conclude that  
$\liminf_{r \rightarrow \infty} \int_{\partial B(x_0,r)} d_{x_0}(x)u^2 dA = 0,$ where $dA$ is the induced surface area measure. Furthermore, using $\langle \nabla_g d_{x_0}, n\rangle_g = 1$ on $\partial B(x_0,r),$ where $n$ is the radial outward unit normal field, we get

\begin{align}\label{J23-2}
\int_M \Delta_g(d_{x_0}^2)\, u^2 \, dV_{g} &= \lim_{r \rightarrow \infty} \int_{B(x_0,r)}\Delta_g(d_{x_0}^2)\, u^2 \, dV_{g}\nonumber\\
&=  \lim_{r \rightarrow \infty} \left[-\int_{B(x_0,r)} \langle \nabla_g (u^2), \nabla_g (d_{x_0}^2) \rangle \, dV_{g} + 2 \int_{\partial B(x_0,r)} d_{x_0}(x)u^2 dA\right]\nonumber\\
&= -4 \int_M u\, d_{x_0} \, \langle \nabla_g u, \nabla_g d_{x_0} \rangle \, dV_{g}\nonumber\\
&\leq 4\left( \int_M |\nabla_g u|^2 \, dV_{g} \right)^\frac{1}{2}\left( \int_M d_{x_0}^2 \, u^2 \, dV_{g} \right)^\frac{1}{2}\nonumber\\
&= 2n\int_M \left(1 + \frac{n-1}{n} \text{{\bf D}}_c(d_{x_0}) \right) u^2 \, dV_{g}.
\end{align}
In the last line we have used that $u$ is an extremal of quantitative $\mbox{\bf{(HPW)}}_{x_0}^c$. Now comparing \eqref{J23-1} with \eqref{J23-2} implies inequality changed to equality everywhere. This in turn implies, equality holds in Laplace comparison 
\begin{align}\label{Laplace equality}
\Delta_g d_{x_0}(x) = (n-1)\,\sqrt{|c|}\,\coth \ \!\bigl(\sqrt{|c|}\,d_{x_0}(x)\bigr).
\end{align}
Therefore, the proof follows from the equality cases of Laplacian comparison result (see appendix Lemma \ref{Laplace rigidity lemma}). 
\end{proof}


\section{Stability of Cartan-Hadamard manifolds}

In this section we prove Theorem~\ref{th:stability-4}. We recall the $\mbox{\bf{(HPW)}}_{x_0}^c$-deficit is given by

\begin{align*}
I_{M}(u) = \left(\int_{M} \vert{}\nabla_g u\vert{}^2 \right) \left(\int_{M} \rho^2 u^2 \right) - \frac{n^2}{4} \left(\int_{M} \left(1 + \frac{n-1}{n} \text{{\bf D}}_c(\rho)\right) u^2 \right)^2\geq 0.
\end{align*}

{\bf Proof of Theorem~\ref{th:stability-4}:}
\begin{proof}
\noindent
{\bf Step 1.} 
In the proof of the uncertainity principle, there were two inequalities used: the Cauchy-Schwartz inequality, and the Laplace comparison inequality (after an integration by parts). 
 We decompose $I_M(u_k)$ as follows: $I_{M_k}(u_k) = \mathcal{E}_{\text{CS}}(u_k) + \mathcal{E}_{\text{Lap}}(u_k)$ where
\begin{align*}
\mathcal{E}_{\text{CS}}(u_k) &= \left(\int_{M_k} \vert{}\nabla_{g_k} u_k\vert{}^2 \right) \left(\int_{M_k} \rho_k^2 u_k^2 \right) - \left( \int_{M_k} u_k \rho_k \langle \nabla_{g_k} u_k, \nabla_{g_k} \rho_k \rangle \right)^2 \geq 0 \\
\mathcal{E}_{\text{Lap}}(u_k) &= \left( \int_{M_k} u_k \rho_k \langle \nabla_{g_k} u_k, \nabla_{g_k} \rho_k \rangle \right)^2 - \frac{n^2}{4} \left(\int_{M_k} \left(1 + \frac{n-1}{n} \text{{\bf D}}_c(\rho_k)\right) u_k^2 \right)^2 \geq 0,
\end{align*}
and $\rho_k(\cdot) = d_{g_k}(\cdot, o_k)$. Here $\mathcal{E}_{\text{CS}}(u_k) \ge 0$ follows from Cauchy-Schwarz inequality, and since $K_{g_k} \le c$, Laplacian comparison implies  $\Delta_{g_k} \rho_k \ge (n-1)\text{{\bf ct}}_c(\rho_k)$, and hence $\mathcal{E}_{\text{Lap}}(u_k) \ge 0$. Since $I_{M_k}(u_k) \to 0$, we deduce that both $\mathcal{E}_{\text{CS}}(u_k) \to 0$ and $\mathcal{E}_{\text{Lap}}(u_k) \to 0$.

\medskip

\noindent
{\bf Step 2:} For simplicity of notations, we denote
\begin{align*}
X_k := \Vert{}\nabla_{g_k} u_k\Vert{}_{L^2} , \ Y_k := \Vert{}\rho_k u_k\Vert{}_{L^2} , \ Z_k := - \int_{M_k} u_k \rho_k \langle \nabla_{g_k} u_k, \nabla_{g_k} \rho_k \rangle \, dV_{g_k} > 0.
\end{align*}
In the above line $Z_k>0$ follows using the arguments presented in the proof of Proposition~\ref{rigidity thm}. 
By step 1, setting $B_k := \int_{M_k} u_k^2 \left( 1 + \frac{n-1}{n}\text{{\bf D}}_c(\rho_k) \right) dV_{g_k}$, we get
 \begin{align*}
 \mathcal{E}_{\text{CS}}(u_k) = X_k^2 Y_k^2 - Z_k^2 \longrightarrow 0, \quad \mathcal{E}_{\text{Lap}}(u_k) = Z_k^2 - \frac{n^2}{4} B_k^2 \longrightarrow 0, 
 \end{align*}
 
 By hypothesis, $B_k \ge \int_{M_k} u_k^2 = 1$, and so $Z_k + \frac{n}{2} B_k \geq  \frac{n}{2},$ we get $Z_k - \frac{n}{2} B_k \rightarrow 0.$
 
 We set $\alpha_k := \frac{Z_k}{Y_k^2}.$ We claim that 
 $ \alpha_{\mbox{\tiny{min}}} \leq \alpha_k \leq \alpha_{\mbox{\tiny{max}}},$ for some $\alpha_{\mbox{\tiny{min}}}, \alpha_{\mbox{\tiny{max}}} >0$ independent of $k.$  In this step we use the hypothesis  $Y_k$ is uniformly bounded from above and below.
 Indeed, using $\text{{\bf D}}_c(\rho) \le \sqrt{\vert{}c\vert{}}\rho$, and Cauchy-Schwartz inequality, we get
\begin{align*}
B_k \le \int_{M_k} u_k^2 \left( 1 + \frac{n-1}{n}\sqrt{\vert{}c\vert{}}\rho_k \right) dV_{g_k} = 1 + \frac{n-1}{n}\sqrt{\vert{}c\vert{}} \int_{M_k} \rho_k u_k^2 \, dV_{g_k} \leq  1 + C_n \sqrt{\vert{}c\vert{}} Y_k,
\end{align*}
for some constant $C_n$ depending only on $n$. Now using $Z_k = \frac{n}{2}B_k + o(1)$, we can bound 
\begin{align*}
\alpha_k = \frac{Z_k}{Y_k^2} \leq  \frac{n}{2} \left( \frac{1 + C_n \sqrt{\vert{}c\vert{}} Y_k}{Y_k^2} \right) + o(1).
\end{align*}
Since $Y_k$ is bounded away from zero, we conclude $\alpha \leq \alpha_{\mbox{\tiny{max}}}.$ On the other hand, since $B_k \ge 1$, we have $Z_k \ge \frac{n}{2} + o(1)$ and therefore, 
\begin{align*}
\alpha_k = \frac{Z_k}{Y_k^2} \ge \frac{n/2 + o(1)}{Y_k^2}.
\end{align*}
Hence, the claim follows.

\medskip

\noindent
{\bf Step 3:} We set $V_k := \nabla_{g_k}u_k + \alpha_k \rho_k u_k \nabla_{g_k} \rho_k$ and claim that $\Vert{}V_k\Vert{}_{L^2(M_k, g_k)} \longrightarrow 0.$

 To prove it we compute 
 \begin{align*}
 \Vert{}V_k\Vert{}_{L^2}^2 &= \Vert{}\nabla_{g_k} u_k\Vert{}_{L^2}^2 + \alpha_k^2 \Vert{}\rho_k u_k\Vert{}_{L^2}^2 + 2\alpha_k \int_{M_k} u_k \rho_k \langle \nabla_{g_k} u_k, \nabla_{g_k} \rho_k \rangle dV_{g_k} \\
 &= X_k^2 + \alpha_k^2 Y_k^2 - 2\alpha_k Z_k\\
 &=X_k^2 + \left(\frac{Z_k}{Y_k^2}\right)^2 Y_k^2 - 2\left(\frac{Z_k}{Y_k^2}\right) Z_k\\
 &= X_k^2 + \frac{Z_k^2}{Y_k^2} - 2\frac{Z_k^2}{Y_k^2} = X_k^2 - \frac{Z_k^2}{Y_k^2}\\
 &= \frac{X_k^2 Y_k^2 - Z_k^2}{Y_k^2} = \frac{\mathcal{E}_{\text{CS}}(u_k)}{Y_k^2} \longrightarrow 0.
 \end{align*}

\medskip
\noindent
{\bf Step 4:} Define $f_k(x) := u_k(x) e^{\frac{\alpha_k}{2} \rho_k(x)^2}$, then 
\begin{align*}
\nabla_{g_k} f_k(x) = e^{\frac{\alpha_k}{2} \rho_k(x)^2} \Big( \nabla_{g_k} u_k(x) + \alpha_k \rho_k(x) u_k(x) \nabla_{g_k} \rho_k(x) \Big) = e^{\frac{\alpha_k}{2} \rho_k(x)^2} V_k(x).
\end{align*}

Fix any arbitrary radius $R > 0$ and consider the metric ball $B_R = B(o_k, R) \subset M_k$. On this ball, $\rho_k(x) \le R$. Since $\alpha_k \le \alpha_{max}$,    $e^{\frac{\alpha_k}{2} \rho_k(x)^2} \le e^{\alpha_{max} R^2 / 2} =: \Lambda(R) < \infty.$ Therefore,
\begin{align*}
\Vert{}\nabla f_k\Vert{}_{L^2(B_R)} \le \Lambda(R) \Vert{}V_k\Vert{}_{L^2(B_R)} \le \Lambda(R) \Vert{}V_k\Vert{}_{L^2(M_k)}  \longrightarrow 0 \quad \text{as } k \to \infty.
\end{align*}
By Poincar\'e inequality, 
\begin{align*}
\Vert{} f_k - C_k \Vert{}_{L^2(B_R)} \le C_P \Vert{}\nabla f_k\Vert{}_{L^2(B_R)} \longrightarrow 0 \quad \text{as } k \to \infty,
\end{align*}
where 
$C_k := \frac{1}{\text{Vol}_{g_k}(B_R)} \int_{B_R} f_k(x) \, dV_{g_k}(x).$ Therefore, 

\begin{align*}
u_k(x) = f_k(x)e^{-\frac{\alpha_k}{2} \rho_k(x)^2} = C_k e^{-\frac{\alpha_k}{2} \rho_k(x)^2} + \underbrace{ (f_k(x) - C_k) e^{-\frac{\alpha_k}{2} \rho_k(x)^2}}_{R_k(x)},
\end{align*}
where $R_k(x) \rightarrow 0$ and hence, $u_k=C_k e^{-\frac{\alpha_k}{2} \rho_k(x)^2}+o(1)$ in $L^2_{\mbox{\tiny{loc}}}$ as $k \rightarrow \infty.$ 

 Since sectional curvature of $M_k$ is unifomrly bounded from above by $c < 0$ and below by $c_0$, $\|u_k\|_{L^2} = 1,$ and $\alpha_{min}\leq\alpha_k\leq \alpha_{max}$, we claim that $C_k$ stays away from zero and infinity. This is a consequence of the fact that $M_k$ has at most exponential volume growth.

Indeed, for any fixed $R>0$,
\begin{align*}
1\geq\int_{B_{o_k}(R)}u_k^2dV_{g_k}=C_k^2 \int_{B_{o_k}(R)}e^{-\alpha_k \rho_k(x)^2}dV_{g_k}+o(1)\gtrsim C_k^2e^{-\alpha_{max}R^2}e^{\sqrt{|c|}R}.    
\end{align*}
Thus, $C_k$ is bounded from above. On the other hand, since $Y_k$ is bounded from above, we fix $R>0$ large enough such that
\begin{align*}
    \int_{M_k\setminus B_{o_k}(R)}u_k^2dV_{g_k}\leq\frac{1}{R^2}\int_{M_k\setminus B_{o_k}(R)}\rho_k^2u_k^2dV_{g_k}dV_{g_k}\leq \frac{Y_k^2}{R^2}<\frac{1}{2}.
\end{align*}

Therefore, 
$$\frac{1}{2}\leq\int_{B_{o_k}(R)}u_k^2dV_{g_k}=C_k^2 \int_{B_{o_k}(R)}e^{-\alpha_k \rho_k(x)^2}+o(1)\lesssim C_k^2e^{-\alpha_{min}R^2}e^{\sqrt{|c_0|}R}.$$
Hence, $C_k$ remains uniformly bounded away from $0$.

\medskip

\noindent
{\bf Step 5:} By step 2, $Z_k - \frac{n}{2} B_k \longrightarrow 0 \quad \text{as } k \to \infty.$ Expressing $Z_k$ in the following suitable form 
\begin{align*}
2 Z_k = -2 \int_{M_k} u_k \rho_k \langle \nabla_{g_k} u_k, \nabla_{g_k} \rho_k \rangle dV_{g_k} = \int_{M_k} u_k^2 \text{div}(\rho_k \nabla_{g_k} \rho_k) dV_{g_k} = \int_{M_k} u_k^2 (1 + \rho_k \Delta_{g_k} \rho_k) dV_{g_k},
\end{align*}
and using the definition of $B_k$ and $\text{{\bf D}}_c(\rho_k)$ we get
\begin{align*}
2 \left( Z_k - \frac{n}{2} B_k \right) &= \int_{M_k} u_k^2 (1 + \rho_k \Delta_{g_k} \rho_k) dV_{g_k} - \int_{M_k} n u_k^2 \left( 1 + \frac{n-1}{n} \text{{\bf D}}_c(\rho_k) \right) dV_{g_k},\\
&= \int_{M_k} u_k^2 \left[ \rho_k \Delta_{g_k} \rho_k - (n-1) - (n-1)\text{{\bf D}}_c(\rho_k) \right] dV_{g_k},\\
&= \int_{M_k} u_k^2 \rho_k \Big( \Delta_{g_k} \rho_k - (n-1)\text{{\bf ct}}_c(\rho_k) \Big) dV_{g_k}.
\end{align*}

Setting $\mathcal{L}_k(x) := \Delta_{g_k} \rho_k(x) - (n-1)\text{{\bf ct}}_c(\rho_k(x)) \geq 0,$ we conclude that $ \int_{M_k} u_k^2 \rho_k \mathcal{L}_k \, dV_{g_k} \longrightarrow 0$. Since $M_k$ is pinched, $\mathcal{L}_k$ are uniformly bounded. For $0  < r_1 < r_2 < \infty$, we denote the annulus by $A(r_1, r_2) = \{x \in M_k : 0 < r_1 \le \rho_k(x) \le r_2\}$. By step 4, $u_k = C_k e^{-\alpha_k \rho_k(x)^2 / 2} + o(1)$ in $L^2_{\mbox{\tiny{loc}}}$ and since both $\alpha_k$ and $C_k$ stays away from zero and infinity, we conclude
\begin{align*}
\lim_{k \to \infty} \int_{A(r_1, r_2)} \mathcal{L}_k(x) \, dV_{g_k} = 0.
\end{align*}

\medskip

\noindent
{\bf Step 6:} Let $\exp_{o_k}: T_{o_k}M_k \to M_k$ be the exponential map which is a global diffeomorphism. In this global polar coordinates $(r, \theta) \in (0, \infty) \times \mathbb{S}^{n-1}$ we write the metric and the volume element as:
\begin{align*}
g_k = dr^2 + g_{k,r}(\theta), \quad  dV_{g_k} = J_k(r, \theta) \,dr \,d \Theta,
\end{align*}
 where $J_k=\sqrt{\det(g_{k,r}( \theta))}$ is the density function as defined in \eqref{8-19-1} and $d\Theta$ is the standard volume element on the unit sphere in $T_{o_k}M_k$. Let $S_k(r) = \operatorname{Hess} \rho_k|_{\gamma_k(r)}: T_{\gamma_k(r)}M_k \to T_{\gamma_k(r)}M_k$ be the shape operator of the distance function along the geodesic $\gamma_k(r)$, i.e. $S_k(r)X=\nabla_{X}\partial_{\rho_k}.$  Then recall that the following relations hold: 
\begin{align}\label{deri J}
\partial_r \log J_k = \operatorname{tr} S_k = \Delta_{g_k}\rho_k.
\end{align}
On hyperbolic space $\hn_c$, the volume factor is $J_{\hn_c}(r) = \left(\frac{\sinh(\sqrt{|c|}r)}{\sqrt{|c|}}\right)^{n-1}$, which satisfies $$\partial_r \log J_{\hn_c} = (n-1)\text{\bf ct}_c(r).$$
Hence, $$\mathcal{L}_k=\partial_r \log J_k(r, \theta) - (n-1)\text{\bf ct}_c(r)= \partial_r \log \left( \frac{J_k(r, \theta)}{J_{\hn_c}(r)} \right)$$

 Let 
\begin{align*}
A_k(r) = \operatorname{Area}_{g_k}(\partial B_{g_k}(o_k, r)) = \int_{\mathbb{S}^{n-1}} J_k(r, \theta) \,d\Theta. 
\end{align*}

Differentiating and using \eqref{deri J} and $\mathcal{L}_k = \Delta_{g_k}\rho_k  - (n-1)\text{{\bf ct}}_c(\rho_k)$ yields:
\begin{align*}
A_k^{\prime}(r) = \int_{\mathbb{S}^{n-1}} \Delta\rho_k J_k(r,\theta) \,d\Theta = (n-1)\text{{\bf ct}}_c(r)A_k(r) + \int_{\partial B_r} \mathcal{L}_k \,dA_k.
\end{align*}
 Dividing by the model area $A_c(r) = \omega_{n-1} \left(\frac{\sinh(\sqrt{|c|}r)}{\sqrt{|c|}}\right)^{n-1}$, we obtain
\begin{align*}
\frac{d}{dr} \left( \frac{A_k(r)}{A_c(r)} \right) = \frac{1}{A_c(r)} \int_{\partial B_r} \mathcal{L}_k \,dA_k.
\end{align*}
For every fixed interval $0 < r_1 < r_2$, integrating the last identity we get 
\begin{align*}
\frac{A_k(r_2)}{A_c(r_2)} - \frac{A_k(r_1)}{A_c(r_1)} = \int_{r_1}^{r_2} \frac{1}{A_c(r)} \left( \int_{\partial B_r} \mathcal{L}_k \,dA_k \right) dr.
\end{align*}
Since $A_c(r)$ is bounded away from zero on $[r_1, r_2]$, by step 5, the right-hand converge to $0$ as $k \to \infty$. In fact we have local uniform convergence:
\begin{align} \label{uniform area conv}
\lim_{k \rightarrow \infty}\sup_{s_1,s_2 \in [r_1, r_2]} \left| \frac{A_k(s_2)}{A_c(s_2)} - \frac{A_k(s_1)}{A_c(s_1)} \right| = 0, \ \ \ \mbox{for every} \ 0 < r_1 < r_2 < \infty.
\end{align}
By volume comparison we know that for every $k$ and every $r > 0$, $A_c(r) \le A_k(r) \le A_{c_0}(r)$ and hence
\begin{align} \label{local unif conv}
0 \le \sup_k \left( \frac{A_k(r)}{A_c(r)} - 1 \right) \le \frac{A_{c_0}(r)}{A_c(r)} - 1 \rightarrow 0 \ \mbox{as} \ r \rightarrow 0.
\end{align}
Hence $\lim_{r \downarrow 0} \sup_k \left\vert{} \frac{A_k(r)}{A_c(r)} - 1 \right\vert{} = 0.$
Therefore \eqref{uniform area conv} and \eqref{local unif conv} gives $\frac{A_k(r)}{A_c(r)} \longrightarrow 1$ uniformly on compact subsets of $(0, \infty).$ As a result, since $ \mbox{Vol}_{g_k}(B(o_k, R)) = \int_0^R A_k(r) \,dr$, we get the volume convergence: 
\begin{align}\label{8-20-1}
\mbox{Vol}_{g_k}(B(o_k, R)) \longrightarrow \int_0^R A_c(r) \,dr = V_c(R),
\end{align}
where $V_c(R)$ is the volume of the corresponding ball in $\mathbb{H}^n_c$. \medskip

\noindent
{\bf Final step:} Since the sectional curvatures of $M_k$ are uniformly bounded above and below by two negative constants, the Cheeger–Gromov compactness theorem implies that, after passing to a subsequence, $(M_k, g_k, o_k) \longrightarrow (X_\infty, d_\infty, o_\infty)$
in the pointed  Gromov–Hausdorff topology, where $X_\infty$ is a $\text{CAT}(c)$ space. In fact, by \cite[Theorem 11.4.7]{Peter} and \cite[Proposition 5.14]{BH}, the convergence is $C^{1,\alpha}$ for every $\alpha\in(0,1)$, and $X_\infty$ carries the structure of a $C^{2,\alpha}$ Hadamard manifold endowed with a $C^{1,\alpha}$ Riemannian metric. Consequently, the corresponding volume measures converge locally, and hence, for every $R>0$,
\begin{align*}
\mathcal{H}^n\bigl(B_{X_\infty}(o_\infty,R)\bigr)
=\lim_{k\to\infty}\operatorname{Vol}{g_k}\bigl(B{g_k}(o_k,R)\bigr)
=V_c(R),
\end{align*}
where the last equality follows from \eqref{8-20-1} and $\mathcal{H}^n$ denotes the $n$-dimensional Hausdorff measure in $X_{\infty}$. By \cite[Proposition 6.1]{Nagano}, equality in the volume comparison theorem implies that $B_{X_\infty}(o_\infty,R)$ is isometric to the ball of radius $R$ in the hyperbolic space $\mathbb{H}_c^n$. 
Since $R>0$ is arbitrary and $X_{\infty}$ is simply connected, it follows that $X_\infty$ is globally isometric to $\mathbb{H}_c^n$. This completes the proof.

\end{proof}

\section{Quantitative rigidity in Cartan-Hadamard manifolds}

In this section we prove quantitative rigidity result, i.e., Theorem~\ref{th:stability}.
\begin{proof}
We fix a unit speed radial geodesic $\gamma$ starting at $x_0$ in the direction of $\theta$ (we keep the dependence on $\theta$ implicit). As before, let $\rho(x) = d_{x_0}(x)$  and $S(t) := \text{Hess}(\rho)|_{\gamma(t)} : T_{\gamma(t)}M \to T_{\gamma(t)}M$ be the shape operator of the distance function along the geodesic $\gamma(t)$, i.e. $S(t)X=\nabla_{X}\partial_{\rho}.$ The evolution of $S$ is governed by the Riccati equation:

\begin{equation}
    D_tS(t) + S(t)^2 + R_{\dot{\gamma}(t)} = 0,
\end{equation}
where  $R_{\dot{\gamma}(t)}X := R\big(X, \dot{\gamma}(t)\big)\dot{\gamma}(t)$ is the Riemann curvature endomorphism, and $D_t=\nabla_{\dot{\gamma}(t)}$ is the covariant derivative along $\gamma.$
The trace of $S(t)$ is the Laplacian of the distance function: $\text{tr}(S) = \Delta_g \rho|_{\gamma(t)}$.

Let $\pi_{t} : T_{\gamma(t)}M \to T_{\gamma(t)}M$ be  the orthogonal projection onto the tangent space of the level set of $\rho$ (onto the orthogonal complement of $\mbox{span} \ \partial_{\rho}|_{\gamma(t)}$). We define $S_c(t) = \text{{\bf ct}}_c(t)\pi_t$ and 
\begin{align*}
\mathcal{D}(t) := S(t) - S_c(t) = S(t) - \text{{\bf ct}}_c(t)\pi_t.
\end{align*}

Since $\dot{\gamma}$ is parallel, $D_t \dot{\gamma} = 0,$
therefore the orthogonal splitting
$T_{\gamma(t)}M=(\mbox{span} \ \partial_{\rho}|_{\gamma(t)})\oplus(\mbox{span} \ \partial_{\rho}|_{\gamma(t)})^\perp$
is parallel along $\gamma$. Consequently, $D_t \pi_t = 0.$
Moreover, as $S(t)$ annihilates the radial vector field, $S(t)$ and $S_c(t)$ commute.
Therefore the evolution for $\mathcal{D}$ is governed by
\begin{align*}
D_t \mathcal{D}(t) + \big(S(t)+S_c(t)\big)\mathcal{D}(t) = c \pi_t - R_{\dot{\gamma}(t)},
\end{align*}
where we used $\text{{\bf ct}}^{\prime}_c(t) + \text{{\bf ct}}_c(t)^2 = -c.$
Now substituting $S(t) = \mathcal{D}(t) + \text{{\bf ct}}_c(t)\pi_t$ into the above equation yields
\begin{align*}
D_t\mathcal{D}(t) + 2\text{{\bf ct}}_c(t)\pi_t\mathcal{D}(t) + \mathcal{D}^2(t) = c \pi_t - R_{\dot{\gamma}(t)}.
\end{align*}

By definition $\mathcal{D}(t)\partial_{\rho}=S(t)\partial_{\rho}-S_c(t)\partial_{\rho}=\nabla_{\partial_{\rho}}\partial_{\rho}-\text{{\bf ct}}_c(t)\pi_{t}\partial_{\rho}=0$ at $\gamma(t)$ and $\mathcal{D}(t)v\in (\mbox{span} \ \partial_{\rho}|_{\gamma(t)})^\perp$ if $v\in (\mbox{span} \ \partial_{\rho}|_{\gamma(t)})^\perp$, we have $\pi_tD(t)v=D(t)v$. Hence, 
$\pi_t\mathcal{D}(t)v=\mathcal{D}(t)v$ for all $ v\in T_{\gamma(t)}M$ and therefore 

\begin{align*}
D_t\mathcal{D}(t) + 2\text{{\bf ct}}_c(t)\mathcal{D}(t) + \mathcal{D}^2(t) = c \pi_t - R_{\dot{\gamma}(t)}.
\end{align*}

Now, we take the trace of this equation. Let $y(t) := \text{tr}(\mathcal{D}(t)) = \Delta_g \rho - (n-1)\text{{\bf ct}}_c(t)$ (as $\gamma$ is a unit speed geodesic). Since the trace of the curvature difference $(c\pi_t - R_{\dot{\gamma}(t)})$ over the orthogonal complement of $\dot{\gamma}(t)$ is $\sum_{i=1}^{n-1} (c - K(\dot{\gamma}(t) \wedge e_i))$, we get the scalar ODE:
\begin{align*}
y'(t) + 2\text{{\bf ct}}_c(t)y(t) + \text{tr}(\mathcal{D}(t)^2) = \sum_{i=1}^{n-1} (c - K(\dot{\gamma}(t) \wedge e_i)).
\end{align*}

Let the curvature difference along the radial geodesics be $E(\rho)$. Then  
\begin{align*}
E(\rho) = \sum_{i=1}^{n-1} |c - K(\dot{\gamma}(\rho) \wedge e_i)|.
\end{align*}

Thus from our Riccati ODE along the radial geodesics:
\begin{align}\label{E and y}
E(\rho) = y'(\rho) + 2\text{{\bf ct}}_c(\rho)y(\rho) + \text{tr}(\mathcal{D}^2).
\end{align}

Note that by definition 
\begin{align*}
\mbox{dist}(M, \hn_c)&=
\int_{\mathbb{S}^{n-1}} \left( \int_0^\infty \left[ \sum_{i=1}^{n-1} |c - K(\dot{\gamma}_\theta(\rho) \wedge e_i)| \right] e^{-\rho^2} J(\rho, \theta) d\rho \right) d\Theta,\\
&=\int_M E(\rho)e^{-\rho^2} dV_{g},
\end{align*}
where $\gamma_{\theta}$ is the radial geodesic starting at $x_0$ in the direction of $\theta$ and $J(\rho, \theta)$ is the Jacobian. 

Using \eqref{E and y} we get
\begin{align*}
\int_M E(\rho)e^{-\rho^2} dV_{g} 
= \int_M \left[ y'(\rho) + 2\text{{\bf ct}}_c(\rho)y(\rho) + \text{tr}(\mathcal{D}^2(\rho)) \right] e^{-\rho^2} dV_{g}.
\end{align*}
Consequently, 
\begin{align}\label{30-7-1}
\mbox{dist}(M, \hn_c)=\int_M \left[ y'(\rho) + 2\text{{\bf ct}}_c(\rho)y(\rho) + \text{tr}(\mathcal{D}^2(\rho)) \right] e^{-\rho^2} dV_{g}.
\end{align}

Now we simplify RHS of \eqref{30-7-1}. The term $y'(\rho)$ is the radial derivative, which we can rewrite as $\langle \nabla_g y, \nabla_g \rho \rangle_g$. Now using integration by parts   over $M$, we have
\begin{equation}\label{30-7-2}
    \int_M y'(\rho) e^{-\rho^2} dV_{g} = \int_M \langle \nabla_g y, \nabla_g \rho \rangle_g e^{-\rho^2} dV_{g} = - \int_M y(\rho) \text{div}(e^{-\rho^2} \nabla_g \rho) dV_{g},
\end{equation}
where we have used the fact that boundary terms at infinity vanish due to the fast decay of the Gaussian weight $e^{-\rho^2}$ against the at-most-exponential volume growth of $M$. Expanding the divergence term we get
\begin{align*}
\text{div}(e^{-\rho^2} \nabla_g \rho) = e^{-\rho^2} \Delta_g \rho + \langle \nabla_g(e^{-\rho^2}), \nabla_g \rho \rangle = e^{-\rho^2} \Delta_g \rho - 2\rho e^{-\rho^2}.
\end{align*}

Substituting the definition of $ y(\rho)$ i.e.,
$y(\rho) =\Delta_g \rho -(n-1)\text{{\bf ct}}_c(\rho)$, into the above relation yields

$$\text{div}(e^{-\rho^2} \nabla_g \rho) = e^{-\rho^2} \left[ y(\rho) + (n-1)\text{{\bf ct}}_c(\rho) - 2\rho \right].$$

Now plugging this back into \eqref{30-7-2} yields
\begin{equation}\label{30-7-3}
    \int_M y'(\rho) e^{-\rho^2} dV_{g} = \int_M y(\rho) \left[ -y(\rho) - (n-1)\text{{\bf ct}}_c(\rho) + 2\rho \right] e^{-\rho^2} dV_{g}.
\end{equation}
Substituting \eqref{30-7-3} into \eqref{30-7-1}, we obtain
\begin{equation}\label{30-7-4}
   \mbox{dist}(M, \hn_c) = \int_M \left[ -y^2(\rho) + \text{tr}(\mathcal{D}^2(\rho)) + 2\rho y(\rho) - (n-3)y\text{{\bf ct}}_c(\rho) \right] e^{-\rho^2} dV_{g}. 
\end{equation}
By Hessain comparison it follows that $\mathcal{D}\geq 0$. Consequently, all the eigenvalues of $\mathcal{D}$ are nonnegative, and hence $\mbox{tr}(\mathcal{D}^2)\leq (\mbox{tr}(\mathcal{D}))^2=y^2$. Therefore,
\eqref{30-7-4} reduces to

\begin{align} \label{error bound}
 \mbox{dist}(M, \hn_c) \leq \int_M y(\rho) \Big( 2\rho - (n-3)\text{{\bf ct}}_c(\rho) \Big) e^{-\rho^2} dV_{g}.
\end{align}

Now we recall the notion of deficit from \eqref{eq:deficit}:  $$\delta(M) = \mathcal{I}_{M}\left(e^{-\frac{1}{2}\rho^2}\right),$$
where $\mathcal{I}_M(\cdot)$ is as given in \eqref{I_M}.
To evaluate $\mathcal{I}_{M}\left(e^{-\frac{1}{2}\rho^2}\right)$, we first observe that 
\begin{align*}
\int_M|\nabla_g (e^{-\frac{1}{2}\rho^2})|^2dV_{g}=\int_M \rho^2e^{-\rho^2}\langle \nabla_g\rho, \nabla_g\rho\rangle_g dV_{g} =\int_M \rho^2e^{-\rho^2}dV_{g}.
\end{align*}

Plugging this integral into the definition of $I_{M}\left(e^{-\frac{1}{2}\rho^2}\right)$ yields
\begin{align}\label{30-7-5}
\delta(M)=\int_M \rho^2e^{-\rho^2}dV_{g}-\frac{1}{2}\int_M \bigl(1 + (n-1)\, \rho\,\text{{\bf ct}}_c(\rho)\bigr)\, e^{-\rho^2} \, dV_{g}.
\end{align}
Set, $X=\rho e^{-\rho^2}\nabla_{g}\rho$. Then using $\langle \nabla_g\rho, \nabla_g\rho\rangle_g=1$, we obtain
\begin{align*}
0=\int_M\mbox{div}_gX\, dV_{g}=\int_M e^{-\rho^2}dV_{g}-2\int_M \rho^2e^{-\rho^2}dV_{g}+\int_M\rho e^{-\rho^2}\Delta_g \rho\, dV_{g},
\end{align*}

where the first equality follows using integration by parts, since $e^{-\rho^2}$ decays faster compared to the volume element. Therefore, 
\begin{align*}
\int_M \rho^2 e^{-\rho^2}dV_{g}=\frac{1}{2}\int_M e^{-\rho^2}dV_{g}+\frac{1}{2}\int_M\rho e^{-\rho^2}\Delta_g g\, dV_{g}.
\end{align*}

Plugging this integral back into \eqref{30-7-5} yields
\begin{align}\label{30-7-6}
    \delta(M)&=\frac{1}{2}\int_M\big[\Delta_g\rho-(n-1) \text{{\bf ct}}_c(\rho)\big]\rho e^{-\rho^2} \, dV_{g}\nonumber\\
    & = \frac{1}{2}\int_M \rho y(\rho) e^{-\rho^2} dV_{g}. 
\end{align}
Comparing \eqref{30-7-6} with \eqref{error bound}, we obtain
\begin{equation}\label{30-7-8}
    \mbox{dist}(M, \hn_c) \leq 4\delta(M)- (n-3)\int_M y(\rho)\text{{\bf ct}}_c(\rho)  e^{-\rho^2} dV_{g}.
\end{equation}

Therefore, if $n \geq 3$ we get the stability estimate:
\begin{align*}
\mbox{dist}(M, \hn_c) \leq 4 \delta(M),
\end{align*}
since $y=\mbox{tr} \ \mathcal{D}\geq 0$ by Hessian comparison. Now we assume $n = 2.$  The first term in \eqref{30-7-8} is precisely $4 \delta(M).$ So, we only need to estimate the later term. Writing $k^2=-c, \kappa_0^2 = -c_0$ and using $M$ is pinched between $c_0$ and $c$ we get
$$0 \leq y(\rho) = \Delta \rho - (n-1) \kappa \coth(\kappa \rho) \leq (n-1) \left( \kappa_0 \coth(\kappa_0 \rho) - \kappa \coth(\kappa \rho) \right) = O(1), \ \ \mbox{if} \ \rho \geq \delta.$$
 Further, using Lemma~\ref{laplace-0}, near $\rho \approx 0,$ and $\text{{\bf ct}}_c(\rho) \approx 1/\rho,$ we get
 \begin{align*}
\Delta \rho = \frac{n-1}{\rho} - \frac{Ric_M(\partial_\rho, \partial_\rho)}{3}\rho + O(\rho^2) 
 \end{align*}
and 
\begin{align*}
(n-1)\kappa \coth(\kappa \rho) = (n-1)\kappa \left( \frac{1}{\kappa \rho} + \frac{\kappa \rho}{3} + O(\rho^3) \right) = \frac{n-1}{\rho} + \frac{(n-1)\kappa^2}{3}\rho + O(\rho^3)
\end{align*}
and therefore, $y(\rho) = O(\rho).$
Let $R > 0$ small and $\rho < R,$ then from the above estimates it follows  $y(\rho) \text{{\bf ct}}_c(\rho) = O(1)$ and the volume element $dV_{g} = J(\rho, \theta) d\rho d\theta \approx \rho d\rho d\theta$. So, near the pole:
\begin{align*}
\int_{B_R} y(\rho) \text{{\bf ct}}_c(\rho) dV_{g} \approx \int_0^{2\pi} \int_0^R C \rho \, d\rho d\theta = O(R^2).
\end{align*}
Now away from the pole, using $\frac{\coth t}{t}$ decreasing in $t,$ we get

\begin{align*}
\int_{M \setminus B_R} y(\rho) \text{{\bf ct}}_c(\rho)  e^{-\rho^2} dV_{g} &= 
\int_{M \setminus B_R} (\kappa\rho y(\rho)) \frac{\kappa\coth \kappa\rho}{\kappa\rho} e^{-\rho^2} dV_{g} \\
& \leq \frac{\kappa^2\coth \kappa R}{\kappa R} \int_M \rho y(\rho) e^{-\rho^2} dV_{g} = O\left(\frac{\delta(M)}{R^2}\right)
\end{align*}
where we used $\coth t = O(\frac{1}{t}).$ Optimizing $R^2 + \frac{\delta(M)}{R^2}$ by taking 
\begin{align*}
R^2 = \frac{\delta(M)}{R^2}, \ \ \mbox{gives} \ \ R^2 = \delta(M)^{\frac{1}{2}}.
\end{align*}

As a result, we get $\int_{M \setminus B_R} y(\rho) \text{{\bf ct}}_c(\rho)  e^{-\rho^2} dV_{g} = O(\delta(M)^{\frac{1}{2}}).$
Hence, from \eqref{30-7-8}, we get
\begin{align*}
\text{dist}(M, \hn_c) = \int_M E(\rho) e^{-\rho^2} dV_{g} \lesssim 4 \delta(M) + \delta(M)^{\frac{1}{2}} \lesssim \delta(M)^{\frac{1}{2}}.
\end{align*}
This completes the proof.
\end{proof}

\subsection{Optimality of the stability estimate for $n\geq 3$}

We construct the following examples:

\medskip

\medskip

Let $c=-\kappa^2$ and define $M := \hn_{c - \epsilon}, \,\epsilon >0.$ Then clearly, $\text{dist}(M, \hn_c) = C \epsilon.$ The Laplacian of the distance function on $M$ is $\Delta_g \rho = (n-1)\tilde{\kappa} \coth(\tilde{\kappa}\rho)$, where 

$$\tilde{\kappa} = \sqrt{\kappa^2 + \epsilon} = \kappa \left( 1 + \frac{\epsilon}{\kappa^2} \right)^{1/2}
 = \kappa + \frac{\epsilon}{2\kappa} + O(\epsilon^2).$$
The model term is $(n-1)\kappa \coth(\kappa\rho)$. Their difference is:
$$y(\rho) = (n-1) \Big[ \tilde{\kappa} \coth(\tilde{\kappa}\rho) - \kappa \coth(\kappa\rho) \Big].$$

Set $f(s) = s \coth(s\rho)$ and Taylor expand around $s = \kappa$. The derivative with respect to $s$ is: 
 
 $$\frac{d}{ds} \Big[ s \coth(s\rho) \Big] = \coth(s\rho) + s  \left( -\text{csch}^2(s\rho) \rho \right) = \coth(s\rho) - s\rho \text{csch}^2(s\rho).$$
 Therefore, 
 $$\tilde{\kappa} \coth(\tilde{\kappa}\rho) - \kappa \coth(\kappa\rho) = \left[ \coth(\kappa\rho) - \kappa\rho \text{csch}^2(\kappa\rho) \right] \left( \frac{\epsilon}{2\kappa} \right) + O(\epsilon^2),$$

and hence, 
 
 $$y(\rho) = \frac{\epsilon(n-1)}{2\kappa} \Big[ \coth(\kappa\rho) - \kappa\rho \text{csch}^2(\kappa\rho) \Big] + O(\epsilon^2).$$
 
The deficit $\delta(M)$ is then given by

\begin{align*}
\delta(M) &= \frac{1}{2} \int_M \rho \, y(\rho) \, e^{-\rho^2} dV_{g} \\
&= \frac{1}{2} \int_0^\infty \rho \left( \frac{\epsilon(n-1)}{2\kappa} \Big[ \coth(\kappa\rho) - \kappa\rho \text{csch}^2(\kappa\rho) \Big]  + O(\epsilon^2)\right) e^{-\rho^2} \omega_{n-1} \left( \frac{\sinh(\tilde\kappa\rho)}{\tilde\kappa} \right)^{n-1} d\rho, \\
&= \epsilon \left[ \frac{(n-1)\omega_{n-1}}{4\kappa} \int_0^\infty \rho \Big[ \coth(\kappa\rho) - \kappa\rho \text{csch}^2(\kappa\rho) \Big] e^{-\rho^2} \left( \frac{\sinh(\kappa\rho)}{\kappa} \right)^{n-1} d\rho \right] + O(\epsilon^2).
\end{align*}

We need to make sure that the integral is finite. Note that we have the following expansion: 
$\coth(s) = \frac{1}{s} + \frac{s}{3} + O(s^2)$ and $\text{csch}^2(s) = \frac{1}{s^2} - \frac{1}{3} + O(s^2)$. Therefore, for small $\rho$,

\begin{align*}
\coth(\kappa\rho) - \kappa\rho \text{csch}^2(\kappa\rho) &= \left( \frac{1}{\kappa\rho} + \frac{\kappa\rho}{3} \right) - \kappa\rho \left( \frac{1}{\kappa^2\rho^2} - \frac{1}{3} \right) + O(\rho^2) \\
&= \left( \frac{1}{\kappa\rho} + \frac{\kappa\rho}{3} \right) - \left( \frac{1}{\kappa\rho} - \frac{\kappa\rho}{3} \right) + O(\rho^2)= \frac{2\kappa\rho}{3} + O(\rho^2).
\end{align*}

Consequently, we have $\delta(M) \approx C \epsilon$.

\section{Manifolds with non-negative Ricci curvature}

Let $(M, g)$ be a complete, non-compact, $n$-dimensional Riemannian manifold with non-negative Ricci curvature ($\operatorname{Ric_M} \geq 0$). Recall the definition of the asymptotic volume ratio (AVR):
\begin{align*}
\theta := \operatorname{AVR}(M) := \lim_{r \to \infty} \frac{V_g(B(x_0, r))}{\omega_n r^n} \leq 1.
\end{align*}
By the Bishop–Gromov volume comparison theorem, this limit is well-defined and independent of the choice of base point $x_0 \in M$. Throughout this section, we assume that $\theta > 0$, meaning that $M$ exhibits Euclidean volume growth.

For such manifolds, Balogh and Krist\'aly \cite[Proposition 3.1]{BK} derived a P\'olya–Szeg\"o inequality. To state it, let $u \in H^1(M)$ be a non-negative function. We define its decreasing Schwarz rearrangement, $u^\star : \mathbb{R}^n \to [0, \infty)$, centered at the origin. The function $u^\star$ is constructed such that it is radial, radially non-increasing, and equimeasurable with $u$. That is, for all $t \geq 0$, the superlevel sets satisfy:
\begin{align*}
\operatorname{Vol}_{\mathbb{R}^n}(\{y \in \mathbb{R}^n \mid u^\star(y) > t\}) = \operatorname{Vol}_g(\{x \in M \mid u(x) > t\}).
\end{align*}
Denote the superlevel sets by $E_t := \{x \in M \mid u(x) > t\}$ and volume $V(t) := \operatorname{Vol}_g(E_t)$, and $r_t$ be the radius of the Euclidean ball with the same volume: $r_t := \left( \frac{V(t)}{\omega_n} \right)^{1/n}$ and define the radial function $u^\star$ by the relation $u^\star(r_t) = t$ for all $t$.
Then the following $\theta$-weighted Pólya–Szeg\"o inequality holds:
\begin{align*}
\int_M \vert{}\nabla_g u\vert{}^2 \, dV_{g} \ge \theta^{2/n} \int_{\mathbb{R}^n} \vert{}\nabla u^\star\vert{}^2 \, dy.
\end{align*}
Furthermore, equality holds (for some smooth $u$) if and only if $M$ is isometric to the Euclidean space $\mathbb{R}^n$. Using the layer cake representation, we also obtain the inequality
\begin{align} \label{moment comparison}
  \int_M d_{x_0}(x)^2 u(x)^2 \, dV_{g} \ge \int_{\mathbb{R}^n} \vert{}y\vert{}^2 u^\star(y)^2 \, dy.  
\end{align}

 Indeed, using layer-cake representation we can write
\begin{align*}
\int_M d_{x_0}(x)^2 u(x)^2 \, dV_{g} = 2 \int_0^\infty t \left( \int_{E_t} d_{x_0}(x)^2 \, dV_{g} \right) dt.
\end{align*}

By the bathtub principle, it is easy to see that, for each $t > 0$, the inner integral is minimized by a geodesic ball $B_{x_0}(R_t)$ satisfying 
$\operatorname{Vol}_g(B_{x_0}(R_t)) = V(t)$, and the Bishop-Gromov volume comparison theorem guaranties $R_t \ge r_t = \left( \frac{V(t)}{\omega_n} \right)^{1/n}$. Consequently, we obtain the lower bound:

\begin{align*}
\int_{E_t} d_{x_0}(x)^2 \, dV_{g} \ge \int_{B_{x_0}(R_t)} d_{x_0}(x)^2 \, dV_{g} \ge \int_{B_{0}(r_t)} \vert{}y\vert{}^2 \, dy,
\end{align*}
where $B_0(r_t) = \{y \in \mathbb{R}^n \mid u^\star(y) > t\} \subset \mathbb{R}^n$ is the Euclidean ball of radius $r_t$. Integrating this final lower bound with respect to $2t \, dt$ recovers the right-hand side of the desired inequality on $\mathbb{R}^n$.

\subsection{Non-quantitative stability}

{\bf Proof of Theorem~\ref{th:stability-3}:}
\begin{proof}
\noindent
{\bf Step 1:}  Let $u_k$ be an almost extremizer, and let $u_k^\star : \mathbb{R}^n \to \mathbb{R}_{\ge 0}$ denote its Schwarz symmetrization. Then it preserves the $L^2$-norm 
$$\Vert{}u_k^\star\Vert{}_{L^2(\mathbb{R}^n)} = \Vert{}u_k\Vert{}_{L^2(M_k)} = 1.$$

Since $\operatorname{Ric}_{M_k} \ge 0$,
 \begin{align*}
 E_{M_k}(u_k) := \int_{M_k} \vert{}\nabla u_k\vert{}^2 dV_{g_k} \ge \operatorname{AVR}(M_k)^{2/n} \int_{\mathbb{R}^n} \vert{}\nabla u_k^\star\vert{}^2 dx =: \operatorname{AVR}(M_k)^{2/n} E_{\mathbb{R}^n}(u_k^\star),
 \end{align*}
 
 and by \eqref{moment comparison}
 
 $$V_{M_k}(u_k) := \int_{M_k} d_{M_k}(o_k, x)^2 u_k(x)^2 dV_{g_k} \ge \int_{\mathbb{R}^n} \vert{}x\vert{}^2 (u_k^\star(x))^2 dx = V_{\mathbb{R}^n}(u_k^\star).$$
 
 We rewrite $1 + \delta(u_k; M_k,o_k)$ as follows:
 \begin{align*}
     1 + \delta(u_k; M_k,o_k) &= \frac{4}{n^2 \operatorname{AVR}(M_k)^{2/n}} E_{M_k}(u_k) V_{M_k}(u_k)\\ &= \underbrace{\left( \frac{E_{M_k}(u_k)}{\operatorname{AVR}(M_k)^{2/n} E_{\mathbb{R}^n}(u_k^\star)} \right)}_{A_k \ge 1} \cdot \underbrace{\left( \frac{V_{M_k}(u_k)}{V_{\mathbb{R}^n}(u_k^\star)} \right)}_{B_k \ge 1} \cdot \underbrace{\left( \frac{4}{n^2} E_{\mathbb{R}^n}(u_k^\star) V_{\mathbb{R}^n}(u_k^\star) \right)}_{C_k \ge 1}.
 \end{align*}

 Since $1 + \delta(u_k; M_k,o_k) \longrightarrow 1$ and $A_k \ge 1,\, B_k \ge 1,\, C_k \ge 1$, up to a subsequence, we have
 \begin{align} \label{limits}
     \lim_{k \to \infty} \frac{E_{M_k}(u_k)}{\operatorname{AVR}(M_k)^{2/n} E_{\mathbb{R}^n}(u_k^\star)} = 1, \ \ \lim_{k \to \infty} \frac{V_{M_k}(u_k)}{V_{\mathbb{R}^n}(u_k^\star)} = 1, \ \ \lim_{k \to \infty} C_k=1.
 \end{align}

 Moreover, since $C_k = 1 + \delta(u_k^\star; \mathbb{R}^n,0) $, we obtain  $\delta(u_k^\star; \mathbb{R}^n,0) \longrightarrow 0$, i.e. $u_k^\star$ is almost an Euclidean HPW optimizer.

 \medskip

 \noindent
 {\bf Step 2:} By \cite[Corollary 2.3]{MV}, there exists $\lambda_k > 0$ such that the rescaled functions$$v_k(y) := \lambda_k^{-n/2} u_k^\star\left(\frac{y}{\lambda_k}\right)$$satisfy $\Vert{}v_k\Vert{}_{L^2(\mathbb{R}^n)} = 1$ and converge strongly in $H^1(\mathbb{R}^n) \cap L^2(\mathbb{R}^n, \vert{}y\vert{}^2 dy)$ to an optimizer of the Euclidean HPW inequality; without loss of generality, we may take the limit to be the standard Gaussian $G(y) = \pi^{-n/4} e^{-\vert{}y\vert{}^2/2}.$ The $\lambda_k$ are chosen such that $\int_{\R^n}|y|^2|v_k|^2dy=1$. By change of variable $\int_{\R^n}|y|^2|v_k|^2dy=\lambda_k^2 V_{\R^n}(u_k^\star)$ and using \eqref{limits} we conclude
 \begin{align*}
    1 = \lim_{k \to \infty} \frac{V_{M_k}(u_k)}{V_{\mathbb{R}^n}(u_k^\star)} = \lim_{k\to\infty}\lambda_k^2V_{M_k}(u_k).
 \end{align*}

  \medskip

 \noindent
{\bf Step 3:} Define the rescaled Riemannian manifold $\tilde{M}_k := (M_k, \tilde{g}_k)$ with $\tilde{g}_k := \lambda_k^2 g_k.$ Then 
 \begin{align} \label{rescaled relation}
 d_{\tilde M_k} = \lambda_k d_{M_k}, \ \ \ \ \ \ \ dV_{\tilde g_k} = \lambda_k^n \ dV_{g_k} \ ,
 \end{align}
and define the corresponding rescaled functions $\tilde{u}_k := \lambda_k^{-n/2} u_k$. Then $v_k = \tilde{u}_k^\star$ is the Schwarz symmetrization of $\tilde{u}_k$. Indeed, note that 
 \begin{align*}
 \operatorname{Vol}_{\tilde g_k}(\{\tilde u_k > t\}) = \lambda_k^n   \operatorname{Vol}_{ g_k}(\{ u_k > t \lambda_k^{\frac{n}{2}}\})
 =  \lambda_k^n  |\{ u_k^\star > t \lambda_k^{\frac{n}{2}}\}|.
 \end{align*}
On the other hand, by change of variable formula in $\mathbb{R}^n$
\begin{align*}
|\{ v_k > t \}| = |\{ u_k^\star(\lambda_k^{-1} \cdot) > t \lambda_k^{\frac{n}{2}}\}| = \lambda_k^n|\{ u_k^\star > t \lambda_k^{\frac{n}{2}}\}| . 
\end{align*}
Therefore, the super-level set of $\tilde{u}_k$ at level $t$ has volume $\lambda_k^n\operatorname{Vol}_{ g_k}(\{ u_k> t \lambda_k^{n/2}\}).$ Hence, the corresponding Euclidean radii satisfy $\tilde{r}_k(t) = \lambda_k r_k(t \lambda_k^{n/2})$. Applying the level set relation $u_k^\star(r_k(s)) = s$ for $s = t \lambda_k^{n/2}$ yields 
\begin{align*}
v_k(\tilde{r}_k(t)) = \lambda_k^{-n/2} u_k^\star\left(\frac{\tilde{r}_k}{\lambda_k}\right) = \lambda_k^{-n/2} u_k^\star(r_k(t \lambda_k^{n/2})) = \lambda_k^{-n/2}(t \lambda_k^{n/2}) = t,
\end{align*}
 proving $v_k = \tilde{u}_k^\star$. It is also easy to verify using \eqref{rescaled relation} that the following holds:
\begin{align*}
 V_{\tilde{M}_k}(\tilde{u}_k) = \lambda_k^2 V_{M_k}(u_k), \quad V_{\mathbb{R}^n}(v_k) = \lambda_k^2 V_{\mathbb{R}^n}(u_k^\star).
 \end{align*}
 
 Since $B_k \to 1$ by \eqref{limits} and $v_k$ converges strongly to the Gaussian $G$, it follows that 
 \begin{align*}
  V_{\tilde{M}_k}(\tilde{u}_k) - V_{\mathbb{R}^n}(v_k) = o(1), \ \ \mbox{as} \ k \rightarrow \infty.
 \end{align*}

  \medskip

{\bf Step 4}:
To simplify notations, we denote:
 \begin{align*}
  \mathcal{V}_k(s) :=  \operatorname{Vol}_{\tilde{g}_k}(B_{\tilde{M}_k}(o_k, s)),
 \end{align*}
 the volume of ball of radius $s$ centered at $o_k$ in $\tilde{M}_k$.
By Cavalieri's principle, denoting the super-level sets $E_{tk} := \{x \in \tilde{M}_k : \tilde{u}_k(x) > t\}$, the second moment is given by
\begin{align*}
V_{\tilde{M}_k}(\tilde{u}_k) = \int_0^\infty \left( \int_{E_{tk}} d_{\tilde{M}_k}(o_k, x)^2 dV_{\tilde{g}_k}(x) \right) d(t^2).
\end{align*}
By bathtub principle, for each $t > 0$, the inner integral is minimized by a geodesic ball $B_{\tilde{M}_k}(o_k, R_{tk})$ satisfying $ \operatorname{Vol}_{\tilde{g}_k}(B_{\tilde{M}_k}(o_k, R_{tk})) =  \operatorname{Vol}_{\tilde{g}_k} (E_{tk}) = \mathcal{V}_k(R_{tk})$. By the Bishop–Gromov comparison theorem, $R_{tk} \ge r_{tk} := \left(\frac{\mathcal{V}_k(R_{tk})}{\omega_n}\right)^{1/n}$. Recalling the notation $\mathcal{V}_k(s) :=  \operatorname{Vol}_{\tilde{g}_k}(B_{\tilde{M}_k}(o_k, s))$, and using the co-area formula (applied to the distance function) we rewrite 
\begin{align*}
\int_{B_{\tilde{M}_k}(o_k, R_{tk})} d_{\tilde{M}_k}(o_k, x)^2 dV_{\tilde{g}_k} = \int_0^{R_{tk}} s^2 {d\mathcal{V}_k(s)}.
\end{align*}

We do the same for the Euclidean space and denote their relative difference by
\begin{align*}
\Delta_k(t) := \int_0^{R_{tk}} s^2 d\mathcal{V}_k(s) - \int_0^{r_{tk}} s^2 d(\omega_n s^n).
\end{align*}
Applying integration by parts, 
\begin{align*}
\int_0^{R_{tk}} s^2 d\mathcal{V}_k(s) = \Big[ s^2 \mathcal{V}_k(s) \Big]_0^{R_{tk}} - \int_0^{R_{tk}} 2s \mathcal{V}_k(s) \, ds =  R_{tk}^2 \mathcal{V}_k(R_{tk}) - \int_0^{R_{tk}} 2s \mathcal{V}_k(s) \, ds.
\end{align*}

Similarly, for the Euclidean counterpart, using $\omega_n r_{tk}^n = \mathcal{V}_k(R_{tk}),$
\begin{align*}
\int_0^{r_{tk}} s^2 d(\omega_n s^n) = \Big[ s^2 (\omega_n s^n) \Big]_0^{r_{tk}} - \int_0^{r_{tk}} 2s (\omega_n s^n) \, ds =  r_{tk}^2 \mathcal{V}_k(R_{tk}) - \int_0^{r_{tk}} 2s (\omega_n s^n) \, ds,
\end{align*}
Grouping the boundary terms together, and since $R_{tk} \ge r_{tk}$ (by Bishop–Gromov), we obtain 
\begin{align} \label{est1}
\Delta_k(t) &= \mathcal{V}_k(R_{tk})(R_{tk}^2 - r_{tk}^2) - \int_0^{R_{tk}} 2s \mathcal{V}_k(s) \, ds + \int_0^{r_{tk}} 2s (\omega_n s^n) \, ds \notag \\
 &= \mathcal{V}_k(R_{tk})(R_{tk}^2 - r_{tk}^2) - \int_{r_{tk}}^{R_{tk}} 2s \mathcal{V}_k(s) \, ds + \int_0^{r_{tk}} 2s \left( \omega_n s^n - \mathcal{V}_k(s) \right) ds.
\end{align}
Since $\mathcal{V}_k(s) =  \operatorname{Vol}_{\tilde{g}_k}(B_{\tilde{M}_k}(o_k, s))$ is a non-decreasing function of $s$, for any radius $s \in [r_{tk}, R_{tk}]$ we have $\mathcal{V}_k(s) \le \mathcal{V}_k(R_{tk}) $. Therefore, 
\begin{align}\label{est2}
\int_{r_{tk}}^{R_{tk}} 2s \mathcal{V}_k(s) \, ds \le \int_{r_{tk}}^{R_{tk}} 2s \mathcal{V}_k(R_{tk}) \, ds = \mathcal{V}_k(R_{tk}) \int_{r_{tk}}^{R_{tk}} 2s \, ds  = \mathcal{V}_k(R_{tk})(R_{tk}^2 - r_{tk}^2).
\end{align}
By \eqref{est1} and \eqref{est2} we conclude
\begin{align*}
\Delta_k(t) \ge \int_0^{r_{tk}} 2s \left( \omega_n s^n - \mathcal{V}_k(s) \right) ds \ge 0.
\end{align*}
As a result, we deduce
\begin{align} \label{est3}
V_{\tilde{M}_k}(\tilde{u}_k) - V_{\mathbb{R}^n}(v_k) = \int_0^\infty \Delta_k(t) d(t^2) \ge \int_0^\infty \left( \int_0^{r_{tk}} 2s \left( \omega_n s^n - \mathcal{V}_k(s) \right) ds \right) d(t^2) \ge 0,
\end{align}
where recall by Bishop-Gromov $\mathcal{V}_k(s) =  \operatorname{Vol}_{\tilde{g}_k}(B_{\tilde{M}_k}(o_k, s)) \leq \omega_ns^n.$

\medskip
\noindent
{\bf Step 5:}  Since the left hand side of \eqref{est3} goes to $0$ as $k \rightarrow \infty,$ passing to a subsequence, for $L^1$-almost every $t > 0$:
\begin{align*}
\int_0^{r_{tk}} 2s \left( \omega_n s^n - \mathcal{V}_k(s) \right) ds \longrightarrow 0.
\end{align*}
Since the integrand $2s(\omega_n s^n - \mathcal{V}_k(s))$ is non-negative everywhere, this $L^1$-convergence imply the pointwise limit $\mathcal{V}_k(s) \longrightarrow \omega_n s^n$ for almost every $s \in (0, r_{tk})$. Since the limit of $v_k$ is Gaussian, it is supported everywhere, and hence $r_{tk} \to \infty$ as $t \to 0$. This implies the pointwise limit holds for almost every $s \in (0, \infty)$. Therefore, the volume density 
\begin{align*}
\tilde{\theta}_k(s) := \frac{\mathcal{V}_k(s)}{\omega_n s^n} \rightarrow 1, \ \ \mbox{as} \ k \rightarrow \infty,
\end{align*}
 almost everywhere. As the density $\tilde{\theta}_k(R)$ is monotonically non-increasing in $R$, for any fixed $R_0 > 0$, choosing $s_0 > R_0$ where $\tilde{\theta}_k(s_0) \to 1$ holds, we get 
 \begin{align*}
 1 \ge \tilde{\theta}_k(R_0) \ge \tilde{\theta}_k(s_0) \longrightarrow 1, \ \ \mbox{as} \ k \rightarrow \infty.
 \end{align*}
Hence $\lim_{k \to \infty} \tilde{\theta}_k(R_0) = 1$ for every $R_0 > 0$ and since the limit is continuous,  by Dini's theorem, $\tilde{\theta}_k(R) \longrightarrow 1$ uniformly on compact subsets of $(0, \infty)$.

\noindent
{\bf Step 6:} Applying the  compactness theorem of Colding \cite[Theorem 0.8]{Col} we conclude that $\tilde{M}_k$ converges  in pointed Gromov-Hausdorff topology to the Euclidean space. This completes the proof. 
\end{proof}

\subsection{Quantitative Rigidity}

{\bf Proof of Theorem~\ref{th:stability-2}:}
\begin{proof}
For notational simplicity we denote $\mbox{AVR}(M)$ by $\theta$ and let $u_\alpha(x)=e^{-\frac{\alpha}{2}d_{x_0}(x)^2},$ for $\alpha \geq 1$ and $\alpha$ to be chosen later. We divide the proof into several steps. In the first two steps, we show that for every $M,$ there exists a $\alpha_M$ large, such that the desired estimate holds. Then we use a compactness argument to conclude our proof. 

\vspace{2mm}

{\bf Step 1:}  In this step we  show that  $\big(\frac{\pi}{\alpha}\big)^{n/2} \theta \leq \|u_\alpha\|^2_{L^2(M)}\leq (\frac{\pi}{\alpha})^\frac{n}{2}$.

To see this, if  we denote by $u_\alpha^{\star}$ the spherical decreasing rearrangement of $u_\alpha$ then, the superlevel sets $\{x\in M:  u_\alpha > t\}, t \leq 1$ are geodesic balls $B_{x_0}(R_\alpha(t))$ with $R_{\alpha}(t) = \sqrt{-\frac{2}{\alpha} \ln t}$. Let $V_\alpha(t) = \text{Vol}_g(\{ u_\alpha > t\}).$ By Bishop-Gromov monotonicity, 
\begin{align*}
\theta \omega_n R_\alpha(t)^n \le V_\alpha(t) \le \omega_n R_\alpha(t)^n.
\end{align*}

Let $\{y\in\R^n: u_\alpha^{\star} > t\}$ be the Euclidean ball of radius $r_\alpha(t),$ then by definition of the rearranged set, it satisfies $r_\alpha(t) = \left( \frac{V_\alpha(t)}{\omega_n} \right)^{1/n}$, and hence
\begin{align} \label{compare R and r}
\theta^{1/n} R_\alpha(t) \leq r_\alpha(t) \leq R_\alpha(t).
\end{align}

Since $u_\alpha^\star(r_\alpha(t)) = t = e^{-\frac{\alpha}{2} R_\alpha(t)^2}$, by \eqref{compare R and r}, we have the following pointwise bound on $u_\alpha^\star(r(t))$ and hence for every $r = |y|, y \in \mathbb{R}^n$:
\begin{equation}\label{e-comp}
e^{-\frac{\alpha}{2}\theta^{-2/n} r^2} \leq u_\alpha^\star(r) \leq e^{-\frac{\alpha}{2}  r^2}.
\end{equation}
Using \eqref{e-comp} we see that
$$\int_{M}|u_\alpha|^2\;dV_{g}=\int_{\R^n}|u^\star_\alpha|^2dy\leq \int_{\R^n}e^{-\alpha  r^2}dy=\bigg(\frac{\pi}{\alpha}\bigg)^\frac{n}{2}.$$
Similarly the opposite inequality holds.
\medskip

{\bf Step 2:} In this step we will prove that for $\alpha = \alpha_M$ sufficiently large,
$$\mathrm{I}_M(u_\alpha)\geq \frac{n^2}{4}\bigg(\frac{\pi}{\alpha}\bigg)^n\theta^{2+1/n}(1-\theta^\frac{1}{n}).$$

First we observe that  $|\nabla_g d_{x_0}|=1$ implies $|\nabla_g u_\alpha|^2=\alpha^2d_{x_0}^2e^{-\alpha d_{x_0}^2}$. Therefore, 
\begin{align}\label{A-B}
\mathrm{I}_M(u_\alpha)&=\alpha^2\bigg(\displaystyle\int_M d_{x_0}^2e^{-\alpha d_{x_0}^2}dV_{g} \bigg)^2-\frac{n^2}{4}\theta^{2/n}\bigg(\displaystyle\int_{M}e^{-\alpha d_{x_0}^2}dV_{g}\bigg)^2\nonumber\\
&=\bigg(A + \frac{n}{2} \theta^{1/n} B\bigg)\bigg(A - \frac{n}{2} \theta^{1/n} B\bigg),
\end{align}
where $$A :=\alpha \int_M d_{x_0}^2e^{-\alpha d_{x_0}^2}dV_{g} \quad\mbox{and}\quad B:=\int_{M}e^{-\alpha d_{x_0}^2}dV_{g}.$$
Now set,
$$v(r):=\frac{\mbox{Vol}_g(B_{x_0}(r))}{\omega_nr^n}, \quad S(r):=\frac{\mbox{Area}_g(\partial B_{x_0}(r))}{n\omega_nr^{n-1}}.$$
Then it also follows that
$$  S(0)=1,\, 0\leq S(r)\leq 1,  \, S \, \mbox{is monotonically decreasing in}\,\, r,\, \lim_{r\to\infty}S(r)=\theta.$$
Using co-area formula, $A$ and $B$ can be simplified as below
$$A=\alpha\int_0^\infty r^2e^{-\alpha r^2}\mbox{Vol}_g(\partial B_{x_0}(r))dr=n\omega_n\alpha\int_0^\infty r^{n+1}e^{-\alpha r^2}S(r)dr.$$
$$B=n\omega_n\int_0^\infty r^{n-1}e^{-\alpha r^2}S(r)dr.$$
Therefore, using \eqref{e-comp}
\begin{align}\label{5-8-4}
A + \frac{n}{2} \theta^{1/n} B\geq  \frac{n}{2} \theta^{1/n} B= \frac{n}{2} \theta^{1/n} \int_{M}|u_\alpha|^2dV_{g}
&=\frac{n}{2}\bigg(\frac{\pi}{\alpha}\bigg)^{n/2} \theta^{1+1/n}.
\end{align}
Using the definition of $A$ and $B$ and defining $r_0^2:=\frac{n}{2} \theta^{1/n}$, yields 
$$A - \frac{n}{2} \theta^{1/n} B=n\omega_n\int_0^\infty (\alpha r^2-r_0^2) r^{n-1}e^{-\alpha r^2}S(r)dr.$$
Now substituting $S(r)=\theta+(S(r)-\theta)$ in the above expression yields
 \begin{equation}\label{5-8-6}
A - \frac{n}{2} \theta^{1/n} B=\mathfrak{I}_1+\mathfrak{I}_2,
\end{equation}
where $$\mathfrak{I}_1:=n\omega_n\theta\int_0^\infty (\alpha r^2-r_0^2) r^{n-1}e^{-\alpha r^2}dr,$$
and $$\mathfrak{I}_2:=n\omega_n\int_0^\infty (\alpha r^2-r_0^2) r^{n-1}e^{-\alpha r^2}(S(r)-\theta)dr.$$
Substituting the value of $r_0^2$ and using the following two straight-forward integrals
\begin{equation}\label{5-8-9}
n\omega_n\int_0^\infty r^{n-1}e^{-\alpha r^2}dr=\left(\frac{\pi}{\alpha}\right)^{n/2} \quad\mbox{and}\quad n\omega_n\int_0^\infty r^{n+1}e^{-\alpha r^2}dr=\frac{n}{2} \left(\frac{\pi}{\alpha}\right)^{n/2}
\end{equation}
we obtain
 \begin{equation}\label{5-8-7}
\mathfrak{I}_1=\frac{n}{2}\bigg(\frac{\pi}{\alpha}\bigg)^{n/2} \theta(1-\theta^{1/n}).
\end{equation}
Now we argue that for sufficiently large $\alpha,$ one has $\mathfrak{I}_2 >0.$
Note that as $S$ is a nonnegative, non-increasing function taking values in $(\theta, 1)$, it's of bounded variation, and thus uniquely generates a non-negative Radon measure $\mu=-dS$ on $(0,\infty)$. Then, by the Lebesgue-Stieltjes representation theorem $$S(r) - \theta = \int_{(r, \infty)} d\mu(s).$$ Now define, $f(r):=(\alpha r^2-r_0^2) r^{n-1}e^{-\alpha r^2}$. Since, $f\in L^1(0,\infty)$ and $\mu((0, \infty)) = S(0^+) - \theta \leq 1$, the product $F(r, s) = f(r) \cdot \chi_{\{0 < r < s\}}\in L^1(dr \otimes \mu)$. By Fubini's theorem
$\mathfrak{I}_2$ can be rewritten as
\begin{align}\label{5-8-8}
\mathfrak{I}_2=n\omega_n\int_{r=0}^\infty f(r)\bigg( \int_{(r, \infty)} d\mu(s)\bigg)dr&=n\omega_n\int_{s\in(0,\infty)}\bigg(\int_{r=0}^s f(r)dr\bigg)d\mu(s)\nonumber\\
&=n\omega_n\int_{s\in(0,\infty)} J(s)d\mu(s),
\end{align}
where, $$\quad J(s):=\int_0^s f(r)dr.$$
We observe that $J$ has the following properties: $J(0)=0$, $J$ is decreasing for $s\in (0,\frac{r_0}{\sqrt{\alpha}})$, $J$ is increasing for $s\in (\frac{r_0}{\sqrt{\alpha}},\infty)$ and using \eqref{5-8-8} a straight-forward computation yields
$$\lim_{s\to\infty}J(s)=\int_0^{\infty}f(r)dr =\frac{n}{4}\alpha^{-n/2}\Gamma\bigg(\frac{n}{4}\bigg)\big(\alpha-\theta^{1/n}\big)>0,$$
since $\alpha\geq 1$ and $\theta<1$ if $M$ is not isometric to $\R^n$. Therefore, by intermediate value theorem there exists $r^*_\alpha>\frac{r_0}{\sqrt{\alpha}}$ such that $J(r^*_\alpha)=0$. In fact, using the definition of gamma function $\gamma(a, x) := \int_0^x t^{a-1} e^{-t} dt$, we can simplify
$$J(s) = \frac{1}{2 \alpha^{n/2}} \left[ \gamma\left(\frac{n}{2} + 1, \alpha s^2\right) - r_0^2 \gamma\left(\frac{n}{2}, \alpha s^2\right) \right].$$
Substituting $r_0^2 = \frac{n}{2} \theta^{1/n}$ in the above expression gives: if  $J(s)=0$ then $s$ must satisfy: $$\frac{\gamma\left(\frac{n}{2} + 1, \alpha s^2\right)}{\gamma\left(\frac{n}{2}, \alpha s^2\right)} = \frac{n}{2} \theta^{1/n}.$$
Now using the gamma recurrence relation $\gamma(a+1, x) = a\gamma(a, x) - x^a e^{-x}$ with $a = \frac{n}{2}$ and $x = \alpha s^2$, we can rewrite the zero condition of $J$ as
$$\frac{(\alpha s^2)^{n/2} e^{-\alpha s^2}}{\gamma\left(\frac{n}{2}, \alpha s^2\right)} = \frac{n}{2} \left(1 - \theta^{1/n}\right).$$
By change of variables $x = \alpha s^2$, the root $s^* = \sqrt{x^*/\alpha}$ is uniquely determined by $x^*$, which satisfies 
\begin{equation}\label{5-8-10}
\frac{(x^*)^{n/2} e^{-x^*}}{ \gamma\left(\frac{n}{2}, x^*\right) } = \frac{n}{2}\left(1 - \theta^{1/n}\right).\end{equation}
Hence unique positive zero of the function $J$ is given by $r^*_\alpha:=\sqrt{x^*/\alpha}$, where $x^*$ is determined by \eqref{5-8-10}. The same gives $J(s)
=\frac{1}{2\alpha^{n/2}}
\left[
\frac{n}{2}\left(1-\theta^{1/n}\right)
\gamma\left(\frac{n}{2},x\right)
-x^{n/2}e^{-x}
\right],$ where  $x=\alpha s^2,$ and hence $J$ is uniformly bounded in $(0,\infty).$ 
Now using this  $r^*_\alpha$ we split the integration in \eqref{5-8-8} into two parts
$$\mathfrak{I}_2=n\omega_n\int_{s\in(0,r^*_\alpha)} J(s)d\mu(s)+n\omega_n\int_{s\in(r^*_\alpha, \infty)} J(s)d\mu(s).$$
Note that the second integral is strictly positive and the 1st integral is strictly negative. Since $r^*_\alpha\to 0$ as $\alpha\to\infty$, we can choose $\alpha>>1$ large enough such that 
\begin{equation}\label{5-8-11}
\mathfrak{I}_2=n\omega_n\int_{s\in(0,r^*_\alpha)} J(s)d\mu(s)+n\omega_n\int_{s\in(r^*_\alpha, \infty)} J(s)d\mu(s)>0.
\end{equation}
Hence substituting \eqref{5-8-11} and \eqref{5-8-7} into \eqref{5-8-6} yields
$$A - \frac{n}{2} \theta^{1/n} B\geq \frac{n}{2}\bigg(\frac{\pi}{\alpha}\bigg)^{n/2} \theta(1-\theta^{1/n}).$$
Combining this with \eqref{5-8-4} and substituting back into \eqref{A-B} gives
$$\mathrm{I}_M(u_\alpha)\geq \frac{n}{2}\bigg(\frac{\pi}{\alpha}\bigg)^{n/2} \theta^{1+1/n} \cdot  \frac{n}{2}\bigg(\frac{\pi}{\alpha}\bigg)^{n/2} \theta(1-\theta^{1/n})=\frac{n^2}{4}\bigg(\frac{\pi}{\alpha}\bigg)^n\theta^{2+1/n}(1-\theta^{1/n})$$

\noindent
{\bf Step 3}: For simplicity of notations, we denote
\begin{align*}
    \tilde{I}_M(u) = \delta \left(\frac{u}{\|u\|_{L^2}}; M, x_0\right),
\end{align*}
where $\delta$ is defined by \eqref{deficit for Ric}. Then 
 Step 1 and Step 2  yields
$$ \tilde I_M(u_\alpha)\geq \theta^{2-1/n}(1-\theta^{1/n}) \quad\mbox{for}\, \alpha\gg1.$$
Therefore, given any complete noncompact Riemannian manifold $(M,g)$ with $Ric\geq 0$, we can choose $\alpha=\alpha_M>1$ such that 
$$ \tilde I_M(u_{\alpha_M})\geq  \theta^{2-1/n}(1-\theta^{1/n}).$$

\noindent
{\bf Step 4:} 
 We show that $\sup_{\alpha\geq 1}\tilde I_M(u_\alpha) \leq \frac{4}{n^2}\theta^{-2/n}\left( 2 + \frac{2^{n/2}}{\theta} \right)^2.$

Using the crude bound $2t \leq e^t + 2,$ in the definition of $\tilde I(u_\alpha)$, we have
we 
\begin{align*}
   \dfrac{n^2}{4} \theta^{2/n}\left( \tilde I( u_\alpha) + 1\right) &= \frac{\left( \int_M \vert{}\nabla u_\alpha\vert{}^2 dV_g \right) \left( \int_M d_{x_0}^2 u_\alpha^2 dV_g \right)}{\left( \int_M u_\alpha^2 dV_g \right)^2} \\
   &= \left( \frac{\alpha \int_M d_{x_0}^2 e^{-\alpha d_{x_0}^2} dV_g}{\int_M e^{-\alpha d_{x_0}^2} dV_g} \right)^2 \\
   &\leq \left( \frac{\int_M  (e^{-\frac{\alpha}{2} d_{x_0}^2} + 2e^{-\alpha d_{x_0}^2})dV_g}{\int_M e^{-\alpha d_{x_0}^2} dV_g} \right)^2\\
   &\leq \left( 2 + \frac{2^{n/2}}{\theta} \right)^2,
\end{align*}
where we used Step 1 to evaluate the last integrals.

\medskip

{\bf Step 5:} We claim that for $\theta_{*}>0$ there exists a constant $C = C(n,\theta_{*})$ independent of the manifold such that  
\begin{align*}
\inf_{\alpha\geq 1}\tilde I_M(u_\alpha) \geq \; C\sup_{\alpha\geq 1}\tilde I_M(u_\alpha),
\end{align*}
holds for all $M$ satisfying $\mbox{AVR}(M) \geq\theta_{*}.$ To emphasize the dependence on the metric, we denote the AVR by $\mbox{AVR}(M,g).$  Consider the rescaled metric $g_\alpha := \alpha g$ on $M$. Then
\begin{align*}
d_{g_\alpha}(o, x) = \sqrt{\alpha} d_g(o, x), \  dV_{g_\alpha} = \alpha^{n/2} dV_g, \  \mbox{AVR}(M, g_\alpha) = \mbox{AVR}(M, g).    
\end{align*}

It is easy to verify that 
\begin{align*}
    \tilde I(u_1)_{M, g_\alpha} = \frac{\alpha^{n+2}}{\alpha^n} \frac{4 \left( \int_M d_g^2 e^{-\alpha d_g^2} dV_g \right)^2}{\frac{n^2}{4} \mbox{AVR}(M)^{2/n}\left( \int_M e^{-\alpha d_g^2} dV_g \right)^2} = \tilde I(u_\alpha)_{M, g}.
\end{align*}

Define the space of pointed Riemannian manifolds:
\begin{align*}
    \mathcal{M}(n, \theta_{*}) := \left\{ (N, h, o) : N \text{ is complete, Riemannian $n$-manifold},  \ Ric_h \ge 0, \ \mbox{AVR}(N, h) \ge \theta_{*} \right\}.
\end{align*}
Because $ Ric_{g_\alpha} = \frac{1}{\alpha} Ric_M \ge 0$ and $\mbox{AVR}(M, g_\alpha) = \mbox{AVR}(M, g) \ge \theta_{*}$, the rescaled manifold $(M, g_\alpha, o) \in \mathcal{M}(n, \theta_{*})$ for all $\alpha \ge 1$.Therefore, for a fixed manifold $M$:
\begin{align*}
\sup_{\alpha \ge 1} \tilde I(u_\alpha)_{M, g} &= \sup_{\alpha \ge 1} \tilde I( u_1)_{M, g_\alpha} \le \sup_{(N, h, o_N) \in \mathcal{M}(n, \theta_{*})}\tilde I( u_1)_{N, h} =: M_0(n, \theta_{*}), \\[8pt]
\inf_{\alpha \ge 1} \tilde I(u_\alpha)_{M, g} &= \inf_{\alpha \ge 1}\tilde I( u_1)_{M, g_\alpha} \ge \inf_{(N, h, o_N) \in \mathcal{M}(n, \theta_{*})} \tilde I( u_1)_{N, h} =: m_0(n, \theta_{*}).
\end{align*}

By step 4, we have $M_0(n, \theta_{*}) <\infty$ and by step 3, $M_0(n, \theta_{*}) >0$.  Now we claim  $m_0(n, \theta_{*}) >0$.

Suppose that the claim does not hold.  Then $m_0(n, \theta_{*}) = 0$ i.e., there exists a sequence $(N_k, h_k, o_k) \in \mathcal{M}(n, \theta_{*})$ such that $\tilde I( u_1)_{N_k, h_k} \to 0$.  Under $Ric_{h} \ge 0$, the Bishop–Gromov volume monotonicity \cite[Lemma 7.1.4]{Peter} ensures that for all balls $B_r(x) \subset B_R(x)$
$$\frac{\mathrm{Vol}_h(B_R(x))}{\mathrm{Vol}_h(B_r(x))} \le \left(\frac{R}{r}\right)^n,$$ 
This gives uniform doubling property on balls across all manifolds in $\mathcal{M}(n, \theta_{*})$. Furthermore, as $\operatorname{AVR}(N, h) \ge \theta_{*}$ it also holds $ \theta_{*}\omega_n\leq \mathrm{Vol}_h(B_x(1))\leq \omega_n$ for all $(N,h,o)\in \mathcal{M}(n, \theta_{*})$. Therefore, 
by Gromov compactness theorem (see \cite[Theorem 27.32(ii)]{Villani} upto a subsequence  $(N_k, h_k, o_k)$ converges to a complete non-collapsed geodesic space  $(X_\infty, d_\infty, o_\infty)$ of Hausdorff dimension $n$. By Cheeger–Colding  volume convergence theorem \cite[Theorem 5.9]{CH}, the volume measures $\mathrm{Vol}_{h_k}$ converge weakly to the $n$-dimensional Hausdorff measure $\mathcal{H}^n$ on $X_\infty$, and for every $r > 0$ 
$$\lim_{k \to \infty} \mathrm{Vol}_{h_k}(B_r(o_k)) = \mathcal{H}^n(B_r(o_\infty)) \ge \theta_{*} \omega_n r^n.$$
Thus,  $\mathrm{AVR}(X_\infty, \mathcal{H}^n) \ge \theta_{*},$ and for any fixed radius $R > 0,$ 
$$\lim_{k \to \infty} \int_{B_{o_k}(R)} e^{-d_k(o_k, x)^2} \, dV_{h_k} = \int_{B_{o_\infty}(R)} e^{-d_\infty(o_\infty, x)^2} \, d\mathcal{H}^n.$$ Moreover, as  $\mathrm{Vol}_{h_k}(B_{o_k}(r)) \le \omega_n r^n$ for all $k$ and all $r > 0$, co-area formula yields 
$$\int_{N_k \setminus B_{o_k}(R)} e^{-d_k(o_k, x)^2} \, dV_{h_k} \le \int_R^\infty 2 \omega_n r^{n+1} e^{-r^2} \, dr.$$ Because this bound is independent of $k$ and vanishes as $R \to \infty$, the tail integrals over $N_k \setminus B_{o_k}(R)$ (and likewise over $X_\infty \setminus B_{o_\infty}(R)$) decay uniformly to $0$. Hence,

\begin{align*}
\lim_{k \to \infty} \int_{N_k}e^{-d_k(o_k, x)^2} \, dV_{h_k}=\int_{X_\infty} e^{-d_\infty(o_\infty, x)^2} \, d\mathcal{H}^n  > 0.
\end{align*}

Similarly, $$\lim_{k \to \infty} \int_{N_k} d_k(o_k, x)^2 e^{- d_k(p_k, x)^2} dV_{h_k} = \int_{X_\infty} d_{o_\infty}(o_\infty, x)^2 e^{- d_{o_\infty}(o_\infty, x)^2} > 0.$$
This leads to $\lim_{k\to\infty}\tilde I( u_1)_{N_k, h_k}>0$ which is a contradiction.
Hence $m_0(n, \theta_{*}) > 0.$ Thus Step 5 follows. 
Therefore,
\begin{align*}
\delta(M) = \tilde I_M( u_{1})\geq\inf_{\alpha\geq 1}\tilde I_M(u_\alpha) \geq  C\sup_{\alpha\geq 1}\tilde I_M(u_\alpha) \geq C(n,\theta_{*})\mbox{AVR}(M)^{2-1/n}\mbox{dist}(M,\R^n).
\end{align*}
This completes the proof. 
\end{proof}

\vspace{2mm}

\subsection{On the best HPW-constant for $\mbox{Ric} \geq 0$}

Let $(M,g)$ be  a non-compact, Riemannian $n$-manifold with $\mbox{Ric}_M \geq 0.$ We further assume that $M$ is not isometric to the Euclidean space, in other words $\theta: = \mbox{AVR}(M) < 1.$ We set 
\begin{align*}
J(u) = \dfrac{\left( \int_M \vert{}\nabla_g u\vert{}^2 dV_g \right) \left( \int_M d(x,o)^2 u^2 dV_g \right)}{\frac{n^2}{4}\left( \int_M u^2 dV_g \right)^2}.
\end{align*}
Then by the HPW inequality and testing $J(u)$ against a sequence of concentrating sequence of Euclidean Gaussian we know that 
\begin{align*}
\theta^{\frac{2}{n}} \leq \inf_{u \in C_c^{\infty}(M), u \neq 0} J(u) \leq 1.
\end{align*}
It is a natural question, what is the infimum. Of course, one would wonder that the infimum should be $\theta^{\frac{2}{n}}.$ However, thanks to our non-quantitative stability, we prove in the following that the infimum is actually strictly bigger than
$\theta^{\frac{2}{n}},$ unless of course, the manifold is isometric to the Euclidean space.

{\bf Proof of Corollary~\ref{cor:best-const-1}:}

\begin{proof}
The strict inequality on the right follows from the rigidity result of Krist\'aly. Thus, we focus only on the strict inequality on the left. 

Assume, on the contrary that  $\inf_{u \in C_c^{\infty}(M), u \neq 0} J(u) = \theta^{\frac{2}{n}}.$ Then there exists a sequence $\{u_k\}$ such that $\|u_k\|_2 = 1$  and $J(u_k) \rightarrow \theta^{\frac{2}{n}},$ i.e., $$\left( \int_M \vert{}\nabla_g u_k\vert{}^2 dV_g \right) \left( \int_M d(x,o)^2 u_k^2 dV_g \right)\to \frac{n^2}{4}\theta^{2/n}.$$ Hence the HPW-deficit goes to zero. Furthermore, up to a subsequence one of the following three alternative holds:
\begin{align*}
&(a) \ \lim_k \int_M d(x,o)^2 u_k^2 \ dV_g = 0, \\
&(b) \ \lim_k \int_M d(x,o)^2 u_k^2 \ dV_g = \infty. \\
&(c) \  \int_M d(x,o)^2 u_k^2 \ dV_g \ \ \mbox{remains bounded away from zero and infinity.}
\end{align*}
We set $\lambda_k^2 := \int_M d(x,o)^2 u_k^2 \ dV_g.$ 

If case (c) occurs, then it implies  $\int_M |\nabla u_k|^2 dV\leq C$ for all $k\geq 1$, which in turn implies  up to a subsequence, $u_k \rightharpoonup u$ in $H^1(M)$ for some $u\in H^1(M)$ and therefore, $u_k\to u$ in $L^2_{loc}(M).$ Moreover,
\begin{equation}\label{8-15-1}
    \int_{M\setminus B_R(o)}u_k^2dV_g \leq \frac{1}{R^2}\int_{M\setminus B_R(o)}d(o,x)^2u_k^2dV_g\leq\frac{C}{R^2}\to 0 \quad\mbox{as} \quad R\to\infty.
\end{equation}
Consequently, $u_k\to u$ in $L^2(M)$ and $\|u\|_{L^2(M)}=1$. Further, by weak lower semi-continuity 
$$J(u)\leq\lim\inf_{k\to \infty}J(u_k)=\theta^{2/n}=\inf_{w\in C^\infty_c(M)}J(w),$$
which implies the infimum is achieved by $u$. Then using symmetrization, equality will hold in P\'olya-Szeg\"o inequality \cite[Proposition 3.1]{BK} (or, by Theorem \ref{th:stability-3} and subsequent remarks) which implies $\theta=1$ and this yields a contradiction. Hence, $(c)$ can not occur.

\medskip

If case (b) occurs, then we know that $(M, \lambda_k^{-2} g, o)$ converges to the tangent cone at infinity of $M$. On the other hand, Theorem~\ref{th:stability-3} implies upto a subsequence $(M, \lambda_k^{-2} g, o)$   converges to the Eucludean space in the  pointed Gromov-Hausdorff topology. In other words, the tangent cone at infinity is Euclidean, which together with the $\mbox{Ric}_M \geq 0$ forces $M$ to be the Euclidean space by Colding's rigidity result \cite[Theorem 0.3]{Col}. This yields a contradiction and hence case (b) can not occur. 

\medskip

Hence, case (a) holds. Combining this along with the first inequality in \eqref{8-15-1} implies 

$\int_{M\setminus B_o(r)}u_k^2dV_g \to 0$ for all $r>0$. Therefore, 
\begin{equation}\label{8-15-2}
    u_k^2 \, dV_g \xrightarrow{\mbox{\tiny{weak}}^*} \delta_o.
\end{equation}
From here we claim that $\lim_{k\to\infty} J(u_k) \geq 1,$ contradicting the hypothesis. 

To see this, we choose $\delta > 0$ strictly smaller than the injectivity radius of $M$ at the basepoint $o$. Then $\exp_o : B_0(\delta) \subset \mathbb{R}^n \to B_o(\delta) \subset M$ is a diffeomorphism. Define
$$\tilde{u}_k(x) := u_k\big(\exp_o(x)\big) \quad \text{for } x \in B_\delta(0) \subset \mathbb{R}^n.$$
In geodesic normal coordinates centered at $o$ above, the metric tensor is $g_{ij}(x) = \delta_{ij} + O(\vert{}x\vert{}^2)$ and $g^{ij}(x) = \delta_{ij} + O(\vert{}x\vert{}^2)$, and the volume element is $dV_g = \sqrt{\det g(x)} \, dx = (1 + O(\vert{}x\vert{}^2)) \, dx.$
Now we take a cut-off function $\phi \in C_c^\infty(B_0(\delta))$ such that $0 \le \phi \le 1$, $\phi(x) = 1$ for $|x|\le \delta/2$ and 
we define
$w_k(x) :=  \phi(x) \, \tilde{u}_k(x)$ if $x \in B_0(\delta)$ and extend by $0$ to $\R^n$. By (a), $\lambda_k\to 0$. We define a rescaling of $w_k$ by
$$v_k(y) := \lambda_k^{n/2} w_k(\lambda_k y) = \lambda_k^{n/2} \phi(\lambda_k y) \, u_k\big(\exp_o(\lambda_k y)\big) \quad \text{for all } y \in \mathbb{R}^n.$$
Therefore, $\mbox{supp}(v_k)\subseteq B_0(\delta/2)$ for $k\gg 1$. Therefore, using \eqref{8-15-2}, we obtain
\begin{align*}
    &\int_{\mathbb{R}^n} v_k(y)^2 \, dy = \int_M u_k^2 \, dV_g + o(1) = 1 + o(1), \\
   & \int_{\mathbb{R}^n} \vert{}y\vert{}^2 v_k(y)^2 \, dy = \frac{1}{\lambda_k^2} \int_M d(x,o)^2 u_k^2 \, dV_g + o(1) = 1 + o(1),\\
&    \int_{\mathbb{R}^n} \vert{}\nabla_y v_k(y)\vert{}^2 \, dy = \lambda_k^2 \int_M \vert{}\nabla u_k\vert{}^2 \, dV_g + o(1) = \left(\int_M d(x,o)^2 u_k^2 \, dV_g\right)\left(\int_M \vert{}\nabla u_k\vert{}^2 \, dV_g\right) + o(1) .
\end{align*}
This in turn implies $J(u_k)=J_{\R^n}(v_k)\big(1+o(1)\big)$. Since by the Euclidean HPW inequality on $\mathbb{R}^n$, $J_{\R^n}(v_k)\geq 1$ for all $k\geq 1$, we obtain $J(u_k)\geq 1$ as $k\to\infty$. This yields a contradiction as $\lim_{k\to\infty}J(u_k)=\theta^{\frac{2}{n}}<1.$
Hence, $\inf J > \theta^\frac{2}{n}$ and this completes the proof. 

\end{proof}

\section{Optimal HPW inequality for $\text{Ric}\geq 0$}

We first recall a few prerequisites needed to state the result. The following results are taken from the articles by Mantegazza, Mascellani, and Uraltsev \cite{Mantegazza_et_al} and Cavalletti and Mondino \cite{CMLaplacian}.

\medskip

Let $(M, g)$ be a complete Riemannian $n$-manifold, $n \ge 2$, and fix $o \in M$. Set $r(x) := d(o, x)$. We assume throughout that $Ric_M \ge 0$.

\medskip 

\subsection{Polar Coordinates : Manifolds with $\mbox{Ric}_M \geq 0$}

Let $S_o M := \{\theta \in T_o M : |\theta| = 1\},$
be the unit tangent bundle. For $\theta \in S_o M$, consider the radial geodesic $\gamma_\theta(r) = \exp_o(r\theta)$, and define the cut time and the injectivity domain
\begin{align*}
   t_{\mbox{\tiny cut}}(\theta):=\sup\{t>0:\gamma_\theta|_{[0,t]}\text{ is minimizing}\}, \ \ \mbox{ID}(o)=\bigl\{v\in T_oM:\ |v|<t_{\mathrm{cut}}(v/|v|)\bigr\},
\end{align*}
 and let $\mbox{Cut}(o)$ be the cut locus of $o.$ Then $\mbox{Cut}(o)$ is a closed subset of $M$ of measure zero and 
 \begin{align*}
     \exp_o\big|_{\mbox{ID}(o)}\;:\;\mbox{ID}(o)\;\longrightarrow\;M\setminus\bigl(\operatorname{Cut}(o)\bigr)
 \end{align*}
is a diffeomorphism (\cite[Theorem 10.34]{JLee}). On the open set $M\setminus(\{o\}\cup\mbox{Cut}(o))$, we introduce polar coordinates $(r,\theta)$. In these coordinates the metric and volume form take the form $g=dr^2+g_r(\theta),$
where $g_r$ is a smooth family of Riemannian metrics on the unit sphere of directions and the associated Riemannian volume measure is therefore
$$dV_g=J(r,\theta)\,dr\,d\Theta=r^{n-1}\mathcal{A}(r,\theta)\,dr\,d\Theta.$$
The normalized Jacobian $\mathcal A$ is smooth and positive on the pre-cut domain and  $\lim_{r \rightarrow 0} \mathcal{A}(r,\theta) = 1.$ On $M \setminus (\{o\} \cup \mbox{Cut}(o))$, the distance function is smooth and satisfies $|\nabla_g r| = 1$ and on the smooth region 
\begin{align*}
\Delta_g r =   \frac{\partial_r J}{J} = \frac{1}{r^{n-1} \mathcal{A}} \partial_r \left( r^{n-1} \mathcal{A} \right) = \frac{n-1}{r} + \frac{\partial_r \mathcal{A}}{\mathcal{A}}.
\end{align*}
Since $\mbox{Ric}_M \ge 0$, $\Delta r \le \frac{n-1}{r}$ and hence $\partial_r \mathcal{A} \le 0$. Thus, for every fixed $\theta$, $r \mapsto \mathcal{A}(r, \theta)$ is non-increasing, which implies $\mathcal{A}(r, \theta) \le 1.$

More importantly, $r$ is locally semiconcave on $M \setminus \{o\}$ (see \;\cite{Carlo2}). Therefore, its distributional Hessian, and in particular its distributional Laplacian, is a Radon measure. In fact $\nabla_g r$ belongs to the space
$\mbox{SBV}_{\mbox{\tiny{loc}}}(M \setminus \{o\})$ of vector fields with locally special bounded variation, i.e the distributional derivative has no Cantor part.

Mantegazza, Mascellani, and Uraltsev \cite{Mantegazza_et_al} in the Riemannian setting, and Cavalletti--Mondino \cite{CMLaplacian} in a more general metric-measure framework, studied global representation formulas for the distributional Laplacian of the distance function. We recall the relevant decomposition in the present Riemannian setting. Writing $\Delta_g r$ as a Radon measure, we decompose it into its absolutely continuous and singular parts with respect to the Riemannian volume measure $dV_g$:
\begin{align*}
\Delta_g r = \left( \frac{n-1}{r} + \frac{\partial_r \mathcal{A}}{\mathcal{A}} \right) dV_g + d\nu_s,
\end{align*}
where $d\nu_s \le 0$, $\nu_s \perp dV_g$, and $\mbox{supt}(\nu_s) \subseteq \mbox{Cut}(o)$. The sign $\nu_s \le 0$ comes from the local semiconcavity of the distance function 
(see \cite[Corollary 2.11]{Mantegazza_et_al} for a precise statement).

\subsection{The divergence of the vector field $r\nabla_g r$}
Consider the vector field $X := r\nabla_g r$. Since $|\nabla_g r| = 1$ a.e., we have formally
\begin{align*}
\divop_g(r\nabla_g r) = |\nabla_g r|^2 + r\Delta_g r = 1 + r\Delta_g r.
\end{align*}
Substituting the decomposition of $\Delta_g r$, we obtain
\begin{align*}
\divop_g(r\nabla_g r) &= dV_g + r \left[ \left( \frac{n-1}{r} + \frac{\partial_r \mathcal{A}}{\mathcal{A}} \right) dV_g + d\nu_s \right] \\
&= \left( n + r \frac{\partial_r \mathcal{A}}{\mathcal{A}} \right) dV_g + r \, d\nu_s,\\
&= \left( n - r \frac{|\partial_r \mathcal{A}|}{\mathcal{A}} \right) dV_g + r \, d\nu_s = d (\divop_g(r\nabla_g r))_{\mathrm{ac}} + (\divop_g(r\nabla_g r))_s
\end{align*}
where we used $\partial_r \mathcal{A} \le 0$. 
Although $r$ is non-smooth at $o$, $o$ produces no additional Dirac mass when $n \ge 2$, because near $o$, $r\nabla_g r \sim x$ in normal coordinates, and $\divop_g(x) \sim n$.

\subsection{Integration by Parts}
Let $u \in C_c^\infty(M)$. Since $X = r\nabla r$ is locally bounded and has distributional divergence given by the Radon measure above, the weak divergence theorem gives
\begin{align*}
\int_M \langle \nabla_g(u^2), r\nabla_g r \rangle \, dV_g = -\int_M u^2 \, d(\divop_g(r\nabla_g r)).
\end{align*}
Since $\nabla_g(u^2) = 2u\nabla_g u$, we get
\begin{align*}
-2 \int_M u \langle \nabla u, r\nabla_g r \rangle \, dV_g = \int_M u^2 \, d(\divop_g(r\nabla_g r)).
\end{align*}
Using the decomposition into absolutely continuous and singular parts:
\begin{align}\label{ibp}
-2 \int_M u \langle \nabla_g u, r\nabla_g r \rangle \, dV_g = \int_M u^2 \left( n - r \frac{|\partial_r \mathcal{A}|}{\mathcal{A}} \right) dV_g + \int_M u^2 r \, d\nu_s.
\end{align}

\subsection{HPW for $\mbox{Ric} \geq 0$}

For $u \in C_c^{\infty}(M)$ satisfying $\|u\|_{L^2}^2 = 1$, we set 
\begin{align*}
\mathfrak{D}(u) := \frac{1}{n}\left[\int_M u^2 \left( r \frac{|\partial_r \mathcal{A}|}{\mathcal{A}} \right) dV_g - \int_M u^2 r \, d\nu_s\right].
\end{align*}
 For general $u \in C_c^{\infty}(M)$, simply normalize by it's $L^2$-norm. Then we have from \eqref{ibp}, and Cauchy-Schwartz inequality, for every $u \in C_c^{\infty}(M)$ satisfying $\|u\|_{L^2}^2 = 1$,

\begin{align}
\frac{n^2}{4} \left( 1 - \mathfrak{D}(u) \right)^2 = 
\left(\int_M u \langle \nabla u, r\nabla_g r \rangle \, dV_g\right)^2
\leq  \left(\int_M   |\nabla_g u|^2 \, dV_g\right)\left(\int_M r^2 u^2  \, dV_g\right).
\end{align}

This is the general HPW-uncertainty principle for $\mbox{Ric} \geq 0$ manifolds. Using Lemma \ref{whatis} below, we can extend the above inequality to radially symmetric, decreasing functions without assuming compact support.
\medskip

{\bf Equality cases:} Equality holds if and only if equality holds in Cauchy-Schwartz inequality. Hence there exist a constant $2\alpha > 0$ such that:

$$-\nabla_g u = 2\alpha  u  r \nabla_g r$$

Since $r(x)$ is the distance from the origin, its gradient $\nabla r$ is just the unit radial vector field $\partial_r$. Therefore, taking inner product with $\nabla_g r$ we get
$$-\frac{\partial u}{\partial r} = 2\alpha  ru.$$

Solving the equation we get:

$$u(x) = C e^{-\alpha r(x)^2},$$
 where $C$ is a constant.

  \subsection{What is $\sup_{\|u\|_{L^2} = 1} \mathfrak{D}(u)$} Since the measure $\nu_s$ in the definition of $\mathfrak{D}(u)$ is singular to $dV_g,$ one can not expect  $\sup_{\|u\|_{L^2} = 1} \mathfrak{D}(u)$ to be finite. However, we have the following lemma in this regard. Recall that a function $u : M \to \mathbb{R}$ is said to be radial if $u(x) = \phi (d(o,x))$
  for some function $\phi.$ 
  
  \begin{lemma}\label{whatis}
  We have
   \begin{align*}
 1 - \sup_{\substack{\|u\|_{L^2} = 1 \\ u \text{ radial, decreasing}}} \mathfrak{D}(u) \geq \mbox{AVR(M)}^{\frac{1}{n}}.
  \end{align*}
  \end{lemma}


\begin{proof}

Denote $\theta := \text{AVR}(M) > 0.$
 For a complete, noncompact $n$-dimensional manifold $M$ with $Ric_M \ge 0$ and strictly positive AVR, we have the  following sharp isoperimetric inequality (see \cite[(1.2)]{BK}). For sets of finite perimeter $E \subset M$, 

\begin{align*}
\text{Area}(\partial E) \ge n(\omega_n \theta)^{\frac{1}{n}} \text{Vol}(E)^\frac{n-1}{n}.
\end{align*}

In particular, applying this isoperimetric inequality to the geodesic ball $E = B_o(R)$ and writing $V(R) = \text{Vol}_g(B_o(R))$ and $A(R) = \text{Area}_g(\partial B_o(R))$, we obtain:
\begin{align} \label{iso-1} 
A(R) \ge n(\omega_n \theta)^{\frac{1}{n}} V(R)^{\frac{n-1}{n}}.
\end{align}

By Bishop-Gromov volume comparison $V(R) \le \omega_n R^n.$ Multiply \eqref{iso-1} by $R$ and and using the last bound we get

\begin{align} \label{iso-2}
R A(R) \ge n(\omega_n \theta)^{\frac{1}{n}} R V(R)^{\frac{n-1}{n}} \ge n(\omega_n \theta)^{\frac{1}{n}} \left( \frac{V(R)}{\omega_n} \right)^{\frac{1}{n}} V(R)^{\frac{n-1}{n}} = n \theta^{\frac{1}{n}} V(R).
\end{align}

We denote by
\begin{align}\label{8-17-1}
d\mu := \left( r \frac{|\partial_r \mathcal{A}|}{\mathcal{A}} \right) dV_g - r  d\nu_s = n  dV_g - \divop_g(r\nabla_g r),
\end{align}
where we used $\divop_g(r\nabla_g r) = n \, dV_g - d\mu.$ Therefore, using the definition of $\mathfrak{D}(u)$ we can write
$$\mathfrak{D}(u)=\frac{1}{n}\int_M u^2d\mu.$$
Thus, to complete the proof of this lemma we are left to show 
$$1-\theta^\frac{1}{n}\geq \frac{1}{n}\int_M u^2d\mu$$
for all $u\in C_c^\infty(M)$ with $\|u\|_{L^2(M)}=1$ and $u$ is radial and radially decreasing. 
Applying the divergence theorem and using \eqref{8-17-1}, we have
\begin{align*}
\mu(B_o(R)) = \int_{B_o(R)} n  dV_g - \int_{\partial B_o(R)} \langle r\nabla_g r, \nu \rangle  dA = n V(R) - R A(R).
\end{align*}
(Here we used that on $\partial B_o(R)$, the outward unit normal is $\nu = \nabla_g r$ by Gauss lemma, and so $\langle r\nabla_g r, \nu \rangle = R$.) Substituting the lower bound obtained in \eqref{iso-2}, we get  

\begin{align*}
\mu(B_o(R)) \le n V(R) - n \theta^{\frac{1}{n}} V(R) = n \left( 1 - \theta^{\frac{1}{n}} \right) V(R).
\end{align*}

Thus, for radial and radially decreasing function $u$ with $\|u\|_{L^2(M)}=1$, we can integrate with respect to $\mu$  by layer cake representation and obtain
\begin{align*}
\int_M u^2  d\mu &= \int_0^\infty \mu(\{ u^2 > t \}) \ dt \\
&\le \int_0^\infty n \left( 1 - \theta^{\frac{1}{n}} \right) \text{Vol}_g(\{ u^2 > t \})  \ dt \\
&= n \left( 1 - \theta^{\frac{1}{n}} \right) \int_M u^2  \ dV_g\\
&=n \left( 1 - \theta^{\frac{1}{n}} \right).
\end{align*}
In this step we use that the superlevel sets $\{ u^2 > t \}$ are geodesic balls.
This completes the proof of the lemma. 
\end{proof}


\section{Appendix}
In this appendix, we provide the remaining details of the proof of the rigidity result, Proposition \ref{rigidity thm}. These details are included both to make the argument self-contained and to highlight the geometric ingredients underlying the main result. The following lemma is classical; a proof can be found in \cite[Lemma 12.2]{Gray}.

\begin{lemma}\label{laplace-0}

Let $(M,g)$ be a Cartan Hadamard manifold and $\rho(x)=d(x,x_0)$. Then
    
$$\Delta \rho = \frac{n-1}{\rho} - \frac{Ric(\partial_\rho, \partial_\rho)}{3}\rho + O(\rho^2) \quad\mbox{as}\quad \rho \to 0.$$
\end{lemma}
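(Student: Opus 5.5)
The plan is to compute the volume density $J(\rho,\theta)$ in geodesic polar coordinates to third order in $\rho$, and then use \eqref{deri J}, i.e.\ $\Delta\rho=\partial_\rho\log J$. The statement is purely local near $x_0$. On a Cartan--Hadamard manifold $\exp_{x_0}$ is a global diffeomorphism, so $\rho$ is smooth on $M\setminus\{x_0\}$ and the polar formula for $\Delta\rho$ holds with no cut-locus issues.

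First, fix a direction $\theta$ and a parallel orthonormal frame $(\dot\gamma_\theta, e_1,\dots,e_{n-1})$ along $\gamma_\theta$. Let $Y_i$ be the Jacobi fields with $Y_i(0)=0$ and $Y_i'(0)=e_i(0)$. Differentiating the Jacobi equation $Y''+R(Y,\dot\gamma)\dot\gamma=0$ at $t=0$ gives $Y_i(0)=0$, $Y_i'(0)=e_i$, $Y_i''(0)=0$ and $Y_i'''(0)=-R_{\dot\gamma}e_i$. Hence
\begin{align*}
Y_i(t)=t\,e_i(t)-\frac{t^3}{6}R_{\dot\gamma}e_i(t)+O(t^4).
\end{align*}
Since $J(t,\theta)=\det\big(\langle Y_i(t),e_j(t)\rangle\big)$, we obtain
\begin{align*}
J(t,\theta)=t^{n-1}\det\Big(I-\frac{t^2}{6}\big(\langle R_{\dot\gamma}e_i,e_j\rangle\big)+O(t^3)\Big)=t^{n-1}\Big(1-\frac{t^2}{6}\mathrm{Ric}(\dot\gamma,\dot\gamma)+O(t^3)\Big),
\end{align*}
where the Ricci term comes from the trace of the curvature block.

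Next, take logarithms and differentiate:
\begin{align*}
\partial_t\log J=\frac{n-1}{t}+\partial_t\log\Big(1-\frac{t^2}{6}\mathrm{Ric}+O(t^3)\Big)=\frac{n-1}{t}-\frac{\mathrm{Ric}(\partial_\rho,\partial_\rho)}{3}\,t+O(t^2).
\end{align*}
Here I use that the $O(t^3)$ remainder is smooth in $t$, so its derivative is $O(t^2)$. This is exactly the claimed expansion. As a cross-check, one can instead insert the ansatz $S(t)=\frac1t\pi_t+tA+O(t^2)$ into the Riccati equation $S'+S^2+R_{\dot\gamma}=0$. Matching the order-$t^0$ terms gives $3A=-R_{\dot\gamma}$ on $\dot\gamma^\perp$, and taking the trace gives the same coefficient $-\mathrm{Ric}/3$.

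The main point needing care is the error terms. We need the $O(t^4)$ bound in the Jacobi field expansion, and its $t$-derivative, to hold uniformly in $\theta$, and we need differentiating the remainder to be legitimate. I would handle both by observing that $(t,\theta)\mapsto Y_i(t)$ is smooth on $[0,\varepsilon]\times\mathbb{S}^{n-1}$, since it is $d\exp_{x_0}$ applied to $t e_i$. Taylor's theorem with integral remainder then gives remainders that are smooth and bounded uniformly over the compact set of directions. Consequently $J/t^{n-1}$ is a smooth function of $(t,\theta)$ equal to $1$ at $t=0$, and its $t$-Taylor coefficients are the ones computed above.
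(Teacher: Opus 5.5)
Your proof is correct. The paper gives no argument of its own for this lemma and only cites \cite[Lemma 12.2]{Gray}; your Jacobi-field expansion $J(\rho,\theta)=\rho^{n-1}\bigl(1-\frac{\rho^2}{6}\mathrm{Ric}(\partial_\rho,\partial_\rho)+O(\rho^3)\bigr)$ followed by $\Delta\rho=\partial_\rho\log J$ is the standard small-radius expansion of the volume density on which that result rests, and your uniformity argument via smoothness of $v\mapsto\det d(\exp_{x_0})_v$ is adequate (choosing the frame $e_i$ locally in $\theta$ suffices since $J$ is frame-independent).
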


\begin{lemma}\label{Laplace rigidity lemma}
Let $(M,g)$ be a Cartan-Hadamard manifold with sectional curvature bounded above by $c < 0$ and
\begin{align*}
 \Delta_g d_{x_0} = (n-1)\mathbf{ct}_c(d_{x_0})
\end{align*}
holds in distributional sense. Then 
$(M,g)$ is isometric to $\hn_c$.
\end{lemma}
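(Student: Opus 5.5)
The plan is to show that the equality $\Delta_g d_{x_0} = (n-1)\mathbf{ct}_c(d_{x_0})$ forces every Jacobi field along every radial geodesic to coincide with the model one. Comparing metrics in geodesic polar coordinates then gives the isometry with $\hn_c$.

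\textbf{Step 1: reduce to the pointwise Hessian.} Since $M$ is Cartan--Hadamard, $\exp_{x_0}$ is a global diffeomorphism, so $\rho=d_{x_0}$ is smooth on $M\setminus\{x_0\}$. The distributional identity is therefore a pointwise identity there. Along each unit-speed radial geodesic $\gamma_\theta$, I use the shape operator $S(t)=\operatorname{Hess}\rho|_{\gamma_\theta(t)}$ and the tensor $\mathcal{D}(t)=S(t)-\mathbf{ct}_c(t)\pi_t$ from Section~4. The Hessian comparison gives $\mathcal{D}(t)\geq 0$. The hypothesis says $\operatorname{tr}\mathcal{D}(t)=y(t)=0$. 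A nonnegative symmetric operator with zero trace vanishes, so $\mathcal{D}\equiv 0$. Hence
\begin{align*}
\operatorname{Hess}\rho=\mathbf{ct}_c(\rho)\,g_\rho \quad\text{on } M\setminus\{x_0\}.
\end{align*}

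\textbf{Step 2: identify the curvature along radial directions.} I insert $\mathcal{D}\equiv 0$ into the matrix Riccati identity derived in Section~4,
\begin{align*}
D_t\mathcal{D}+2\mathbf{ct}_c\mathcal{D}+\mathcal{D}^2=c\pi_t-R_{\dot\gamma},
\end{align*}
and obtain $R_{\dot\gamma}=c\,\pi_t$. Equivalently, the radial sectional curvatures satisfy $K(\dot\gamma\wedge e)=c$ for every unit $e\perp\dot\gamma$. I do not need curvature in non-radial planes.

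\textbf{Step 3: build the isometry.} Let $\Phi=\exp_{x_0}\circ\iota\circ\exp^{\hn_c}_{p}{}^{-1}$, where $\iota:T_p\hn_c\to T_{x_0}M$ is a linear isometry. This $\Phi$ is a global diffeomorphism because both exponential maps are. Take $v\perp\theta$ in $T_{x_0}M$ and the Jacobi field $J(t)=d(\exp_{x_0})_{t\theta}(tv)$ with $J(0)=0$ and $J'(0)=v$. By Step~1, $J'=S J=\mathbf{ct}_c(t)J$. Hence $|J|'=\mathbf{ct}_c(t)|J|$, so $|J(t)|=\frac{\sinh(\sqrt{|c|}t)}{\sqrt{|c|}}|v|$. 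This is exactly the model value.

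The polarization identity extends this to inner products, giving $\langle J_v,J_w\rangle=\mathrm{sn}_c(t)^2\langle v,w\rangle$, where $\mathrm{sn}_c(t)=\sinh(\sqrt{|c|}t)/\sqrt{|c|}$. Alternatively, $J$ is a parallel field times $\mathrm{sn}_c$, since $D_t(J/\mathrm{sn}_c)=0$ by Step~1. Therefore in polar coordinates $g=d\rho^2+\mathrm{sn}_c(\rho)^2 h$, where $h$ is the round metric. This is the $\hn_c$ metric, so $\Phi$ is an isometry.

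The only delicate point is justifying that the distributional equality is pointwise, together with the behaviour at the pole. Both follow from smoothness of $\rho$ away from $x_0$ on a Cartan--Hadamard manifold, and the pole is a single point. I expect the rest to be routine.
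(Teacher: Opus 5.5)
Your argument is correct. Your Steps 1 and 2 are the paper's Steps 1 and 2: trace equality plus the Hessian comparison forces $\operatorname{Hess}\rho=\mathbf{ct}_c(\rho)\,g_\rho$, and the Riccati equation then gives radial curvature $c$. The two proofs differ in how the isometry is produced.

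The paper works with the scalar quantity $m(\rho,\theta)=\partial_\rho\ln\sqrt{|g|}=\Delta_g\rho$. It integrates it to $\sqrt{|g|}=\varphi(\theta)\sinh^{n-1}(\kappa\rho)$ and normalizes by small-ball asymptotics, obtaining $\mathrm{Vol}(B_{x_0}(r))=\mathrm{Vol}_{\hn_c}(B(r))$ for all $r$. It then extends this to balls about arbitrary centres. Finally it invokes the equality case of Bishop--G\"unther together with an external result that upgrades isometric balls to a global isometry.

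You instead keep the full operator identity $S(t)=\mathbf{ct}_c(t)\pi_t$. Feeding it into $D_tJ=S(t)J$ for radial Jacobi fields with $J(0)=0$ gives $J=\mathrm{sn}_c\cdot P$ with $P$ parallel, and hence $\exp_{x_0}^*g=d\rho^2+\mathrm{sn}_c(\rho)^2h$ directly. Your route is shorter and self-contained: it needs no volume-rigidity theorem, no gluing result, and no discussion of other centres. In fact your Step 2 is unnecessary, since Step 3 uses only the Hessian identity from Step 1.

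What the paper's route buys is uniformity with Section 3. There the stability theorem is concluded by the same volume-equality mechanism, and only integrated trace information, not the full Hessian, is available.

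The one fact you use without justification is $D_tJ=\nabla_J\partial_\rho=S(t)J$. It follows from $D_t\partial_s=D_s\partial_t$ applied to the variation $(s,t)\mapsto\exp_{x_0}(t\theta(s))$ through unit-speed radial geodesics, with $\theta(s)\in\mathbb{S}^{n-1}$ and $\theta'(0)=v$. It deserves a line in your write-up.
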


\begin{proof} 
We divide this proof into three steps. 

Step 1: 
In geodesic polar coordinated, the metric is represented by $g = d\rho^2 + g_{\rho}$. By Hessian comparison 
\begin{align}\label{Hess ineq}
\mbox{Hess} \, d_{x_0}(x) \geq \sqrt{|c|}\,\coth \ \!\bigl(\sqrt{|c|}\,d_{x_0}(x)\bigr) \, g_{\rho}.
\end{align}
Since trace of $\mbox{Hess} \, d_{x_0}(x) = \Delta_g d_{x_0}(x)$, comparing \eqref{Hess ineq} with \eqref{Laplace equality} yields
\begin{equation}\label{Hess-eq}
\mbox{Hess} \, d_{x_0} (x)= \sqrt{|c|}\,\coth\!\bigl(\sqrt{|c|}\,d_{x_0} (x)\bigr) \, g_{\rho}.
\end{equation}

\medskip

{Step 2:} In this step we show that the sectional curvature of $(M,g)$ in the radial direction is identically equal to $c$. The proof is achieved by the comparison of Riccati equation.  

Let $\rho = d_{x_0}(x)$ and $X$ be a parallel field to $\partial_{\rho}$, i.e. $\nabla_{\partial_{\rho}} X = 0.$ Let $v\in T_{x_0}M$ with $\|v\|=1$ and $\gamma(t):=\exp_{x_0}(tv)$ be a unit speed geodesic. Thus, along the ray $\gamma$: 
$$\rho(\gamma(t))=d_{x_0}(\gamma(t))=t.$$
Therefore, $\frac{d}{dt}\|X(t)\|^2=\frac{d}{dt}g(X, X)=2g(\nabla_{\partial_\rho}X,X)=0$, i.e., $\|X(t)\|=\mbox{constant}$ and  $$\frac{d}{dt}g(X, \partial_{\rho}) = g(\nabla_{\partial_\rho}X, \nabla_{\partial_\rho}\partial_\rho)=0,$$ i.e., $g(X, \partial_{\rho})=\mbox{constant}$.  Therefore, we can assume that $g(X, \partial_{\rho}) = 0$ and $g(X, X) = 1$ by first projecting $X$ onto ${\partial_\rho}^\perp$ and then normalizing it. Next  applying Riccati equation on $X$ (see \cite[page 101]{Peter}) yields


\begin{equation}\label{Riccati}
\partial_\rho \big( \operatorname{Hess}\, \rho (X, X) \big)
+ \operatorname{Hess}^2 \rho (X, X)
= -\operatorname{sec}\,(X, \partial_{\rho}).
\end{equation}
where for any two tangent vectors $X,Y$, $\operatorname{sec} \, (X, Y)$ is the sectional curvature of the plane spanned by $X$ and $Y$ and is given by 
\begin{align*}
\operatorname{sec} \, (X, Y) = \frac{g(R(X,Y)Y,X)}{g(X,X)g(Y,Y) - g(X,Y)^2}.
\end{align*}
Here $R(X,Y)$ denotes the Riemann curvature operator and defined by 
$$R(X,Y)Z:=\nabla_X\nabla_Y Z-\nabla_Y\nabla_X Z-\nabla_{[X,Y]}Z.$$
Define
\begin{align*}
U(\rho) := \mbox{Hess} \, \rho (X,X)=\mbox{Hess} \, d_{x_0}(x) (X,X),
  \ \tilde{U}(\rho) := \sqrt{|c|}\,\coth \ \!\bigl(\sqrt{|c|}\,\rho\bigr)
\end{align*}
Since, $\tilde{U}$ solves $\tilde{U}^{\prime} + \tilde{U}^2 = - c$, subtracting the Riccati equation corresponding to $\mathbb{H}^n_c$ from \eqref{Riccati} yields
$$(U'-\tilde{U}^\prime)+(U^2-\tilde{U}^2)=-(\operatorname{sec}\,(X, \partial_{\rho})-c).$$
Consequently, $\operatorname{sec}\,(X, \partial_{\rho})=c$ for every parallel field $X$ to $\partial_{\rho}$  as $U=\tilde{U}$ by \eqref{Hess-eq}. 
That is the sectional curvature in the radial direction is identically equal to $c.$ This proves our claim. 

\medskip

{ Step 3:} In this step, we show that the volume of every geodesic ball centered at $x_0$ coincides with that of the corresponding geodesic ball in the simply connected space form of constant sectional curvature.
The metric in polar coordinates is given by
\[
 d\rho^2 + g_{\rho} = d\rho^2 + g_{ij}(\rho,\theta)\, d\theta^i d\theta^j .
\]
Recall, $|g|: = \det(g_{ij}).$ Then the Laplacian on $M$ in polar co-ordinates takes the form
\begin{align*}
\Delta_g &= \frac{1}{\sqrt{|g|}} \frac{\partial}{\partial \rho} \left( \sqrt{|g|} \frac{\partial}{\partial \rho} \right) + \Delta_{\partial B(0,\rho)},\\
& = \frac{\partial^2}{\partial \rho^2} + m(\rho,\theta)\frac{\partial}{\partial \rho} + \Delta_{\partial B(0,r)},
\end{align*}

where $m(\rho,\theta) = \frac{\partial}{\partial \rho} \left( \ln \sqrt{|g|} \right).$
Since we proved that the radial sectional curvature of $(M,g)$ is equal to $c = -\kappa^2 = -\frac{\psi^{\prime\prime}(\rho)}{\psi(\rho)},$ with $\psi(\rho) = \sinh(\kappa \rho)$, using Lemma~\ref{vol bound}, we obtain
\[
m(\rho,\theta) \geq (n-1)\frac{\psi^{\prime}(\rho)}{\psi(\rho)} = (n-1)\kappa \coth(\kappa\rho) \quad\forall \rho>0, \, \theta\in \mathbb{S}^{n-1}.
\]

Now, applying the Laplacian to the distance function $\rho = d(x_0,x)$, we obtain
\[
\Delta_g \rho = m(\rho,\theta) \geq (n-1)\kappa \coth(\kappa \rho).
\]

But from \eqref{Laplace equality} we have $\Delta_g \rho = (n-1) \kappa \coth(\kappa\rho)$ and hence
\begin{equation}\label{23-7-2}
m(\rho,\theta) = (n-1) \kappa \coth(\kappa\rho).
\end{equation}
Now using the definition of  $m(\rho,\theta)$ i.e.,  $m(\rho,\theta)= \frac{\partial}{\partial\rho} \ln\big( \sqrt{|g|(\rho,\theta)}\big)$ into \eqref{23-7-2}, we obtain
\begin{align*}
\ln \big(\sqrt{|g|(\rho,\theta)} \big)= \int_0^\rho m(s,\theta)\, ds + C(\theta)=(n-1)\ln(\sinh(k\rho))+ C(\theta).
\end{align*}

Therefore,
\[
\sqrt{|g|(\rho,\theta)} = \varphi(\theta)\, \sinh^{\,n-1}(\kappa\rho),
\]
where $ \varphi(\theta)=e^{C(\theta)}$. Consequently, we deduce
\begin{align}\label{24-7-1}
\mathrm{Vol}(B_{x_0}(r)) 
= \int_0^r \int_{\mathbb{S}^{n-1}} \phi(\theta)\, \sinh^{n-1}(\kappa\rho)\, d\theta\, d\rho.=C(n)\int_0^r\sinh^{n-1}(\kappa\rho)\,  d\rho \quad \forall\, r>0.
\end{align}
Using the Taylor expansion:
$\sinh(\kappa\rho) = \kappa\rho + \frac{(\kappa\rho)^3}{6} + O(\rho^5),$
 and $\mathrm{Vol}(B_{x_0}(r))=\omega_n r^n + o(r)^n$ as $r\to 0$, we get from \eqref{24-7-1} 
$$\omega_nr^n=C(n)\int_{0}^r(\kappa\rho)^{n-1}\, d\rho + o(r^n)=c(n)\frac{\kappa^{n-1}r^n}{n} + o(r^n).$$
This in turn implies $C(n)=\frac{n\omega_n}{\kappa^{n-1}}.$ Therefore, from \eqref{24-7-1} we have
\begin{equation}\label{24-7-3}
\mathrm{Vol}(B_{x_0}(r)) =\frac{n\omega_n}{\kappa^{n-1}}\int_0^r\sinh^{n-1}(\kappa\rho)\,  d\rho=\mathrm{Vol}_{\mathbb{H}^n_c}(B_{x_0}(r))  \quad \forall\, r>0.\end{equation}

Now we aim to show that the above volume formula holds for any $x \in M.$ By volume comparison theorem for any $x \in M$ it holds

$$
\mathrm{Vol}_{\hn_c}(B_{x}(r))\leq \mathrm{Vol}(B_{x}(r))  \quad \forall \ r > 0. 
$$

Therefore using the fact that 
$B(x, r) \subset B(x_0, r + d(x, x_0))$ and the above volume formula \eqref{24-7-3} for a fixed pole $x_0 \in M$ we have

\begin{align*}
n\omega_n
&\leq \limsup_{r \rightarrow \infty}
\dfrac{\mathrm{Vol}(B_{x}(r))}{\displaystyle\int_{0}^{r} \left( \frac{\sinh(\kappa s)}{\kappa} \right)^{n-1} \; d\rho} \\
 &\leq \limsup_{r \rightarrow \infty}
\dfrac{\mathrm{Vol}(B_{x_0}(r + d(x, x_0)))}{\displaystyle\int_{0}^{r + d(x, x_0)} \left( \frac{\sinh(\kappa s)}{\kappa} \right)^{n-1} \; d\rho} \cdot\dfrac{\displaystyle\int_{0}^{r + d(x, x_0)} \left( \frac{\sinh(\kappa s)}{\kappa} \right)^{n-1} \; d\rho}{\displaystyle\int_{0}^{r} \left( \frac{\sinh(\kappa s)}{\kappa} \right)^{n-1} \; d\rho} \\
& = n\omega_n.
\end{align*}

Consequently,

\begin{align*}
\mathrm{Vol}(B_{x}(r)) 
= \frac{\omega_{n-1}}{\kappa^{n-1}} \int_0^r \sinh^{n-1}(\kappa\rho)\, d\rho, =\mathrm{Vol}_{\hn_c}(B_{x}(r))\quad \forall x\in M,\, \forall\, r > 0.
\end{align*}

Therefore, by the volume comparison theorem  $B_x(r)\subset M$ is isometric to the ball of radius $r$ in 
$\hn_c$ for all $x\in M$ and $r>0$. Hence, applying \cite[Proposition 4.14(2)]{BF} we obtain a global isometry between $(M,g)$ and $\mathbb{H}^n_{c}$ (since universal cover of any Cartan Hadamard manifold is the manifold itself).
\end{proof}

\vspace{4mm}

\noindent \textbf{Funding:}
The research of M.~Bhakta is supported by the ANRF-MATRICS grant\\
(ANRF/ARGM/2025/000126/MTRDST). The research of D.~Ganguly is partially supported by the ANRF MATRICS Research Grant (MTR/2023/000331) and ANRF-Advanced Research Grant (ANRF/ARG/2025/000572/MS). D. Karmakar acknowledges the support of the Department of Atomic
Energy, Government of India, under Project Identification No. RTI 4014 and ANRF ARG-MATRICS grant ANRF/ARGM/2025/000976/MTR.

\vspace{0.1cm}

\noindent \textbf{Competing interests:} The authors have no competing interests to declare that are relevant to the content of this article.

\vspace{0.1cm} 

\noindent\textbf{Data availability statement:} Data sharing is not applicable to this article as no data sets were generated or analysed during the current study.

\end{document}